\documentclass[10pt]{amsart}
\usepackage[leqno]{amsmath}
\usepackage{amssymb,latexsym,soul,cite,amsthm,color,enumitem,graphicx,mathtools,microtype,accents, esint, tikz, comment} 
\usepackage[colorlinks=true,
            linkcolor=blue,
            citecolor=blue,
            urlcolor=blue]{hyperref}
\definecolor{cobalt}{rgb}{0.0, 0.28, 0.67}
\usepackage[english]{babel}
\usepackage[left=2.5cm,right=2.5cm,top=2.5cm,bottom=2.5cm]{geometry}

\allowdisplaybreaks
\makeatletter
\DeclareRobustCommand*{\bfseries}{%
  \not@math@alphabet\bfseries\mathbf
  \fontseries\bfdefault\selectfont
  \boldmath
}

\numberwithin{equation}{section}

\newtheorem{theorem}{Theorem}[section]
\theoremstyle{plain}
\newtheorem{lemma}[theorem]{Lemma}
\theoremstyle{plain}
\newtheorem{proposition}[theorem]{Proposition}
\theoremstyle{plain}

\newtheorem{definition}[theorem]{Definition}
\theoremstyle{definition}
\newtheorem{remark}[theorem]{Remark}

\newcommand{\N}{{\mathbb N}}

\newcommand{\R}{{\mathbb R}}
\newcommand{\eps}{\varepsilon}
\newcommand{\mm}{\mathtt{m}}
\newcommand{\rr}{\mathtt{r}}
\renewcommand{\epsilon}{\varepsilon}
\renewcommand{\rho}{\varrho}
\newcommand{\beq}{\begin{equation}}
\newcommand{\eeq}{\end{equation}}
\renewcommand{\le}{\leqslant}
\renewcommand{\ge}{\geqslant}

\renewcommand{\d}{\mathrm{d}}                 
\newcommand{\dx}{\mathrm{d}x}
\newcommand{\dy}{\mathrm{d}y}
\newcommand{\dz}{\mathrm{d}z}
\newcommand{\dt}{\mathrm{d}t}

\newcommand{\loc}{\mathrm{loc}}
\DeclareMathOperator*{\esssup}{ess\,sup}

\def\Xint#1{\mathchoice
{\XXint\displaystyle\textstyle{#1}}%
{\XXint\textstyle\scriptstyle{#1}}%
{\XXint\scriptstyle\scriptscriptstyle{#1}}%
{\XXint\scriptscriptstyle\scriptscriptstyle{#1}}%
\!\int}
\def\XXint#1#2#3{{\setbox0=\hbox{$#1{#2#3}{\int}$ }
\vcenter{\hbox{$#2#3$ }}\kern-.6\wd0}}
\def\fint{\Xint-}

\def\Yint#1{\mathchoice
    {\YYint\displaystyle\textstyle{#1}}%
    {\YYint\textstyle\scriptstyle{#1}}%
    {\YYint\scriptstyle\scriptscriptstyle{#1}}%
    {\YYint\scriptscriptstyle\scriptscriptstyle{#1}}%
      \!\iint}
\def\YYint#1#2#3{{\setbox0=\hbox{$#1{#2#3}{\iint}$}
    \vcenter{\hbox{$#2#3$}}\kern-.51\wd0}}
\def\longdash{{-}\mkern-3.5mu{-}} 

\def\fiint{\Yint\longdash}

\makeatletter
\newcommand{\leqnomode}{\tagsleft@true}
\newcommand{\reqnomode}{\tagsleft@false}
\makeatother

\usepackage{graphicx} 

\title[Integral Harnack inequality  and expansion of positivity for f.p.m. equations]{Integral Harnack inequality and expansion of positivity for fractional Porous Medium equation}
\author[Cassanello]{Filippo Maria Cassanello}  \address{Filippo Maria Cassanello\\Università degli Studi di Cagliari, Via Ospedale 72, 09124, Cagliari, Italy }
\email{\url{filippom.cassanello@unica.it}}
\author[De Filippis]{Filomena De Filippis}  \address{Filomena De Filippis\\Fachbereich Mathematik, Universität Salzburg, Hel lbrunner Str. 34, 5020 Salzburg, Austria}
\email{\url{filomena.defilippis@plus.ac.at}}
\author[Stanko]{Calvin Stanko}  \address{Calvin Stanko\\Fachbereich Mathematik, Universität Salzburg, Hel lbrunner Str. 34, 5020 Salzburg, Austria }
\email{\url{calvin.stanko@plus.ac.at}}

\begin{document}

\subjclass[2020]{\vspace{1mm} 35B65, 35R11, 76S05} 

\keywords{\vspace{1mm} Integral Harnack inequality, expansion of positivity, porous medium, nonlocal}

\thanks{{\it Acknowledgements.} This research was funded in whole or in part by the Austrian Science Fund (FWF) [10.55776/PAT1850524]. For open access purposes, the author has applied a CC BY public copyright license to any author accepted manuscript version arising from this submission. \\ The author is also supported by INdAM - GNAMPA Project, CUP E53C25002010001.}

\begin{abstract}
We establish an integral Harnack inequality and  expansion of positivity for nonnegative weak solutions to fractional porous medium-type equations in the fast diffusion regime, under optimal tail assumptions.

\end{abstract}

\maketitle
\begin{center}
\begin{minipage}{12cm}
\tableofcontents
\end{minipage}
\end{center}
\section{Introduction}
\noindent
We are interested in the study of integral Harnack-type estimates and expansion of positivity for nonnegative weak solutions to the fractional porous medium equation
\begin{equation}\label{eq:PDE}
    \partial_t u^q-\mathcal{L}_K^s u=0
    \qquad \mbox{in } \Omega_T,
\end{equation}
where $s\in(0,1)$, $q>1$, and $\Omega_T:=\Omega\times(0,T]$ is a space-time cylinder over an open and bounded set $\Omega\subset\mathbb R^N$, $N\geq2$. The nonlocal operator $\mathcal{L}_K^s$ is formally defined by
$$
    -\mathcal{L}_K^s u(x,t)
    =2\,\mathrm{P.V.}\int_{\mathbb R^N}(u(x,t)-u(y,t))K(x,y,t)\,\dy,
$$
where $\mathrm{P.V.}$ denotes the principal value and the kernel $K:\mathbb R^N\times\mathbb R^N\times(0,T]\to[0,\infty)$ is measurable and satisfies
\begin{equation}\label{k}
    \frac{\texttt{L}^{-1}}{|x-y|^{N+2s}}
    \leq K(x,y,t)=K(y,x,t)
    \leq \frac{\texttt{L}}{|x-y|^{N+2s}}
\end{equation}
for every $t\in(0,T]$, $(x,y)\in\mathbb R^N\times\mathbb R^N$, and for some constant $\texttt{L}\geq1$.

The local counterpart belongs to the broader class of doubly nonlinear
parabolic equations
\[
    \partial_t u^q-\operatorname{div}\bigl(|Du|^{p-2}Du\bigr)=0,
\]
which provide classical models of nonlinear diffusion. Their
mathematical interest lies in the interaction between the nonlinear time
derivative and, when $p\neq 2$, the nonlinear diffusion. The local regularity theory for this class has been extensively developed; we refer to \cite{BDL_21,BDKS20,BDLS23,BDMS18,BDMSV18, BDGLS, KK07,KSU12} and the references therein for a comprehensive account. The first investigations of boundedness and gradient regularity go back to the works of Ivanov \cite{I4,I0,I2,I1}, see also \cite{HP85}. More recently, B\"ogelein \& Duzaar \& Gianazza \& Liao \& Scheven \cite{BDGLS} established H\"older continuity of the gradient of weak solutions in the fast diffusion regime. A key ingredient in their argument is a time-insensitive Harnack inequality, which, for $p=2$, is valid in the range
\begin{equation} \label{onqq}
     1<q<\frac{N}{(N-2)_{+}},
\end{equation}
see also \cite{DKV91, DK92}. Its proof combines an integral Harnack inequality, which in turn relies on the quantitative local boundedness estimate available in the larger range
\begin{equation}\label{bound}
    1<q<\frac{N+2}{(N-2)_{+}},
\end{equation}
with an expansion of positivity argument. These tools play a fundamental
role in the derivation of Harnack-type estimates for linear and nonlinear
parabolic equations; see, among others, \cite{DGV1,DGV2,DGV3,BDLS23,BDGLS}.  
Throughout the present paper we work in the fast diffusion regime $q>1$, in which solutions exhibit infinite speed of propagation, as in the case of the heat equation.

The local theory described above provides a natural benchmark for the fractional porous medium equation studied in this paper. In the nonlocal setting, however, the results must account for the influence of exterior values of the solution, which is commonly quantified through a nonlocal tail; see \cite{DKP14,DKP16}.  Our results are established under the optimal
tail assumption. More precisely, for a cylinder
$Q:=B_R(x_0)\times I$, we will consider the $L^1$-in-time tail
\begin{equation}\label{tail}
\operatorname{Tail}(u;Q)
:=
\int_I\int_{\mathbb{R}^N\setminus B_R(x_0)}
\frac{|u(x,t)|}{|x-x_0|^{N+2s}}\,\dx \, \dt.
\end{equation}
Regularity results for nonlinear nonlocal parabolic
equations, including fractional $p$-Laplacian and doubly nonlinear models,
can be found in
\cite{BGK22,StromqvistLB19,DZZ21,ByunKim24,L1,L2,L3, L4, LW25, DieningKimLeeNowak25,KM,DGV3,KK07,KSU12} and references therein. Fractional porous medium equations, in particular, have been investigated in
\cite{BV15,BSV15,BFV18}. The present paper establishes an integral Harnack inequality and expansion of positivity  for fractional porous medium-type
equations.  The integral estimate controls the local supremum of a solution in terms of its spatial $L^q$-mass at suitable time levels, the intrinsic scale of the equation and the exterior contribution. Its proof combines the quantitative local boundedness estimate of \cite{FDF} with an $L^q$-$L^q$ estimate adapted from \cite{CCI} to the nonlinear time derivative.  The first ingredient  was recently established in \cite{FDF} in the range
\begin{equation}\label{critical-range}
    1<q<q_c,
    \qquad
    q_c:=\frac{N+2s}{N-2s},
\end{equation}
which is the fractional counterpart of the local bound  \eqref{bound}. The same work also establishes qualitative local boundedness at the critical exponent and, under a suitable higher-integrability assumption, a quantitative estimate at and above the critical threshold. The $L^q$-$L^q$ estimate is established below in Proposition \ref{lq-lq}.

The expansion of positivity lies at the heart of any form of Harnack estimate. In the present setting, information on the measure of a positivity set at a time level $t_0$ propagates forward in time and expands from $B_\varrho(x_0)$ to $B_{2\varrho}(x_0)$ on the intrinsic time scale
\[
    k^{q-1}\varrho^{2s}.
\]
The proof is based on a Caccioppoli energy estimate and a sequence of De Giorgi-type arguments.
 We first state the integral Harnack inequality. Its proof uses the quantitative local boundedness estimate with integrability exponent $\texttt{r}=q$, which accounts for 
 the bound
 \begin{equation}\label{onq}
    1<q<\frac{N}{N-2s}.
\end{equation}

\begin{theorem}[Integral Harnack inequality]\label{lq-linf}
Let $q$ satisfy  \eqref{onq}, and let $u$ be a local, nonnegative weak solution to  \eqref{eq:PDE} in $\Omega_T$. Then, for every cylinder
\(
    Q_{\varrho,\sigma}(x_0,t_0)
    :=
    B_\varrho(x_0)\times(t_0-\sigma,t_0]
    \Subset\Omega_T,
\)
there exists a positive constant $\gamma$, depending only on $N,q,s,\textnormal{\texttt{L}}$, such that
\begin{align} \label{intH}
    \esssup_{Q_{\varrho/2,\sigma/2}(x_0,t_0)}u
    &\leq
    \gamma
    \left[
        \inf_{\tau\in[t_0-\sigma,t_0]}
        \int_{B_\varrho(x_0)}u^q(x,\tau)\,\dx
    \right]^{\frac{2s}{\Lambda_q}}
    \sigma^{-\frac{N}{\Lambda_q}}
    +\gamma
    \left(\frac{\sigma}{\varrho^{2s}}\right)^{\frac{1}{q-1}}
    \left(P_-+P_-^{\frac{q}{q-1}}\right)^{\frac{2s}{\Lambda_q}}  + \gamma
    \left(\frac{\sigma}{\varrho^{2s}}\right)^{\frac{1}{q-1}}
    \notag \\ & \qquad + \gamma
    \left(\frac{\sigma}{\varrho^{2s}}\right)^{-\frac{N}{\Lambda_q}}
    \operatorname{Tail}^{\frac{2s}{\Lambda_q}}
    \bigl(u_+;Q_{\rho,\sigma}(x_0,t_0)\bigr)
+\gamma\,
    \operatorname{Tail}^{\frac1q}
    \bigl(u_+;Q_{\varrho/2,\sigma}(x_0,t_0)\bigr),
\end{align}
where $\Lambda_q$ and $P_{-}$ are defined according to  \eqref{Lambda} and 
 \eqref{def:P_negative_with_L_1_tail}.
\end{theorem}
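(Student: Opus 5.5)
The plan is the one announced in the introduction: combine the quantitative local boundedness estimate of \cite{FDF} (the $L^{\mathtt r}$--$L^\infty$ estimate with $\mathtt r=q$) with the $L^q$--$L^q$ estimate of Proposition~\ref{lq-lq}, and then absorb the intermediate sup-norms by an interpolation/iteration lemma on shrinking cylinders. Fix $\tfrac12\varrho\le \tilde\varrho<\hat\varrho\le\varrho$ and the corresponding times $\tfrac12\sigma\le\tilde\sigma<\hat\sigma\le\sigma$.

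\textbf{Step 1.} Apply the local boundedness result of \cite{FDF} with $\mathtt r=q$ on the pair $Q_{\tilde\varrho,\tilde\sigma}\subset Q_{\hat\varrho,\hat\sigma}$. This is admissible precisely because \eqref{onq} gives $\Lambda_q>0$. It yields
\[
\esssup_{Q_{\tilde\varrho,\tilde\sigma}}u\le \gamma\,(\text{scale factors in }(\hat\varrho-\tilde\varrho),(\hat\sigma-\tilde\sigma))\Bigl(\fiint_{Q_{\hat\varrho,\hat\sigma}}u^q\Bigr)^{\theta}+\gamma\Bigl(\frac{\sigma}{\varrho^{2s}}\Bigr)^{\frac1{q-1}}+\text{tail terms},
\]
with the exponent $\theta$ tied to $2s/\Lambda_q$.

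\textbf{Step 2.} Split $u^q=u\cdot u^{q-1}$ and bound $\iint_{Q_{\hat\varrho,\hat\sigma}}u^q\le \hat\sigma\,\sup_t\int_{B_{\hat\varrho}}u^q(\cdot,t)\,\dx$. Then use Proposition~\ref{lq-lq} to control $\sup_t\int_{B_{\hat\varrho}}u^q$ by $\inf_\tau\int_{B_\varrho}u^q(\cdot,\tau)\,\dx$. The error terms of that proposition are the ones involving $P_-$ (through \eqref{def:P_negative_with_L_1_tail}), a time-scale term of the form $(\sigma/\varrho^{2s})^{q/(q-1)}\varrho^N$, and the $L^1$-in-time tail. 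If Proposition~\ref{lq-lq} is only of pure $L^q$ form without a sup, I would instead use Young's inequality in the form
\[
\Bigl(\fiint u^q\Bigr)^{\theta}\le \Bigl(\sup_{Q_{\hat\varrho,\hat\sigma}} u\Bigr)^{\theta(q-\epsilon)}\Bigl(\fiint u^{\epsilon}\Bigr)^{\theta}.
\]
Since $\theta q<1$ should hold in the range \eqref{onq}, one can then absorb $\tfrac12\sup_{Q_{\hat\varrho,\hat\sigma}}u$, leaving a remainder that carries the negative powers of $\hat\varrho-\tilde\varrho$.

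\textbf{Step 3.} Iterate over $\varrho_n=\tfrac\varrho2(1+2^{-n})$ and $\sigma_n$ similarly, using the standard lemma ($f(\tilde r)\le\tfrac12 f(\hat r)+A(\hat r-\tilde r)^{-\beta}+B$ implies $f(\varrho/2)\le\gamma(A\varrho^{-\beta}+B)$). This gives \eqref{intH}. The tail terms need care. Tails over the intermediate balls $B_{\hat\varrho}$ are compared with $\operatorname{Tail}(u_+;Q_{\varrho/2,\sigma})$ and $\operatorname{Tail}(u_+;Q_{\varrho,\sigma})$ via $|x-x_0|\ge \varrho/2$ outside $B_{\hat\varrho}$. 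The piece of $u$ inside $B_\varrho\setminus B_{\hat\varrho}$ is then controlled by the $L^q$ mass, using H\"older. The homogeneity of the two surviving tail terms in \eqref{intH}, namely exponents $1/q$ and $2s/\Lambda_q$, is fixed by scaling: one checks each term against the intrinsic scaling $u\mapsto \lambda u$, $t\mapsto\lambda^{q-1}t$.

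\textbf{Main obstacle.} I expect the hardest part to be the bookkeeping of exponents in Step 2. The power $2s/\Lambda_q$ of the $L^q$ mass together with $\sigma^{-N/\Lambda_q}$ must come out exactly. This requires the $L^q$ estimate to be inserted \emph{before} the absorption, so that the inf over time appears rather than an average. It also requires the lower-order contributions from Proposition~\ref{lq-lq} to be reorganized into the $P_-$ and $(\sigma/\varrho^{2s})^{1/(q-1)}$ terms by Young's inequality with the conjugate exponents dictated by $\Lambda_q$. To settle this I would first run the argument in the scaled case $\varrho=\sigma=1$ and then restore the parameters by the intrinsic scaling.
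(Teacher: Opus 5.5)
Your Steps 1--2 are the paper's argument. It applies Theorem~\ref{th1} once with $\mathtt{r}=q$ on $Q_{\varrho/2,\sigma/2}\subset Q_{\varrho,\sigma}$, bounds the space-time average by $\sup_t\fint_{B_\varrho}u^q(\cdot,t)\,\dx$, inserts Proposition~\ref{lq-lq} divided by $\varrho^N$, splits the power $2s/\Lambda_q$ over the sum, and then readjusts radii. Since no $\esssup u$ survives on the right, your Step 3 iteration and the Young-type reorganization are unnecessary: the identity $\frac{2sq}{q-1}-N=\frac{\Lambda_q}{q-1}$ turns $(\varrho^{2s}/\sigma)^{N/\Lambda_q}[\varrho^{-N}(\sigma/\varrho^{\lambda})^{q/(q-1)}]^{2s/\Lambda_q}$ directly into $(\sigma/\varrho^{2s})^{1/(q-1)}$, and your fallback claim $\theta q<1$ is false ($2sq/\Lambda_q>1$ for $q>1$), though that branch is never needed.
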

\noindent
We next turn to the expansion of positivity. Since exterior values enter
the Caccioppoli estimate through a tail term, the nonlocal result takes
the form of an alternative involving the tail. Compared with the
fractional expansion of positivity obtained by Misawa \& Yamaura
\cite{MY}, a relevant feature of our result is that the exterior
contribution is controlled through the $L^1$-tail. The factor $\Theta$
appearing in the tail alternative is defined in  \eqref{THETA}, and its
origin is explained in Remark~\ref{remark1}.

\begin{theorem}[Expansion of positivity]\label{prop:expansion_positivity}
Let $q>1$, and let $u$ be a locally bounded, nonnegative local weak  supersolution, to  \eqref{eq:PDE} in $\Omega_T$.
Let $Q:= B_{8\rho}(x_0) \times (t_1, t_2] \Subset \Omega_T$. Suppose that, for some $\alpha\in(0,1)$ and $k>0$, there holds
\[
    \bigl|
        \{u(\cdot,t_0)\geq k\}
        \cap B_\varrho(x_0)
    \bigr|
    \geq
    \alpha|B_\varrho|.
\]
Then there exist constants $\delta,\eta\in(0,1)$, depending only on $N,q,s,\textnormal{\texttt{L}}$ and on $\alpha$, such that either
\[
    \left(
        \Theta \,
        \operatorname{Tail}\bigl(u_{-};Q\bigr)
    \right)^{\frac1q}
    >
    \eta k,
\]
or
\[
    u\geq\eta k
    \quad\mbox{a.e. in }\;
    B_{2\varrho}(x_0)\times
    \left(
        t_0+\frac12\delta k^{q-1}\varrho^{2s},
        t_0+\delta k^{q-1}\varrho^{2s}
    \right],
\]
provided
\[
    B_{8\varrho}(x_0)\times
    \left(
        t_0,
        t_0+\delta k^{q-1}(4\varrho)^{2s}
    \right]
    \subset Q.
\]
Moreover,
$ \delta\approx\alpha^{N+3}$, $ \eta\approx\alpha^\beta$
for some $\beta>1$ depending only on the data.
\end{theorem}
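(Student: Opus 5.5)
The proof follows the classical three-step scheme of DiBenedetto–Gianazza–Vespri, adapted to the nonlinear time derivative $\partial_t u^q$ and to the $L^1$-in-time tail. Throughout we assume the tail alternative fails, i.e.
\[
\bigl(\Theta\,\operatorname{Tail}(u_-;Q)\bigr)^{1/q}\le \eta k,
\]
so that every nonlocal contribution in the energy estimates can be absorbed by the level $k$ (more precisely by $(\eta k)^q$, after multiplying by the factor $\Theta$ from \eqref{THETA}).

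\textbf{Step 1: propagation of measure in time.} Test the supersolution inequality with $(k-u)_+\zeta^2$, with $\zeta$ cut off in space only, between $B_\varrho$ and $B_{(1-\sigma)\varrho}$. The time part produces the quantity
\[
\int_{B_\varrho}\mathfrak g_-(u,k)\zeta^2\,\dx,\qquad \mathfrak g_-(u,k)=q\int_u^k (k-v)\,v^{q-1}\,dv\approx \bigl(u+k\bigr)^{q-1}(k-u)_+^2 .
\]
At $t_0$ this quantity is bounded by $\gamma(1-\alpha)k^{q+1}|B_\varrho|$. The local diffusion term is bounded by $\gamma k^2\varrho^{-2s}\sigma^{-2}|B_\varrho|\,t$. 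The tail term is bounded by
\[
k\,\varrho^{N}\sigma^{-N-2s}\int\!\!\int \frac{u_-}{|x-x_0|^{N+2s}}\,\dx\,\dt \le k\,(\eta k)^q |B_\varrho|\cdot(\text{const}),
\]
and this is exactly where the $L^1$-in-time tail (rather than $L^\infty$ in time) suffices, because we integrate in time. For $t\le \delta k^{q-1}\varrho^{2s}$ we obtain, for $\epsilon$ small depending on $\alpha$,
\[
|\{u(\cdot,t)>\epsilon k\}\cap B_\varrho|\ge \tfrac{\alpha}{2}|B_\varrho|
\qquad\text{for all } t\in\bigl(t_0,\,t_0+\delta k^{q-1}\varrho^{2s}\bigr],
\]
with $\delta\approx\alpha^{N+3}$ coming from the choice $\sigma\approx\alpha$ and the lower bound on $\mathfrak g_-$ on the set $\{u<\epsilon k\}$.

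\textbf{Step 2: shrinking lemma.} Pass to the larger ball $B_{4\varrho}$, where the density is still $\gtrsim\alpha 4^{-N}$. Apply the energy estimate on the cylinder $B_{8\varrho}\times(t_0,t_0+\delta k^{q-1}(4\varrho)^{2s}]$ at the levels $\epsilon k/2^j$, and combine it with the fractional De Giorgi isoperimetric-type inequality (or the nonlocal "good term" $\int\!\!\int (u-l)_-(x)(u-l)_+(y)K$, which directly yields a measure estimate). This gives
\[
|\{u<\epsilon k/2^{j}\}|\le \gamma\, j^{-1/2}\,|Q|
\]
on the intrinsic cylinder. The time scale is matched because $(\epsilon k/2^j)^{q-1}\le k^{q-1}$. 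The tail contributions are again absorbed, provided $\eta\le \epsilon 2^{-j_*}$.

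\textbf{Step 3: De Giorgi iteration to a pointwise bound.} Run a De Giorgi iteration for $(u-l)_-$ with $l=\epsilon k2^{-j_*}$ on shrinking cylinders, ending in $B_{2\varrho}\times(t_0+\tfrac12\delta k^{q-1}\varrho^{2s},\,t_0+\delta k^{q-1}\varrho^{2s}]$. The fast geometric convergence requires the initial measure to be small, which Step 2 supplies by choosing $j_*$ large depending on $\alpha$. The final constant is $\eta=\epsilon 2^{-j_*-1}\approx\alpha^\beta$.

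\textbf{Main obstacle.} The delicate point is the time derivative. Since $\partial_t u^q$ is degenerate/singular near $u=0$, the iteration must be carried out with a Sobolev embedding involving the weights $\mathfrak g_-$. We handle this by working on cylinders where $u\le k$, so that $(u+k)^{q-1}\approx k^{q-1}$ is comparable to $l^{q-1}$ up to the factor $2^{j_*(q-1)}$. That factor is absorbed into the intrinsic time scale, and it also accounts for the specific form of $\Theta$. The second delicate point is controlling the tail uniformly along the iteration with only an $L^1$-in-time bound. We would bound $\sup_x\int_{\mathbb R^N\setminus B_{r}}u_-(y,t)K(x,y,t)\,\dy$ by $(1+\text{dilation})^{N+2s}$ times the tail centred at $x_0$, and then integrate in time against $(u-l)_-\le l$. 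This keeps the dependence linear in $\operatorname{Tail}(u_-;Q)$.
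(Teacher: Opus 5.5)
Your Step 1 is sound. It is Lemma~\ref{lem3} with a constant level, which suffices there because the $L^1$-in-time tail enters linearly and integrated in time. The gap is in how Steps 2 and 3 are coupled, exactly where you say the factor $2^{j_*(q-1)}$ is ``absorbed into the intrinsic time scale''.

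Your shrinking estimate lives on the long cylinder of length $\approx\delta k^{q-1}\varrho^{2s}$. The uniformity in $j$ of the energy bounds there is precisely what $(\varepsilon k2^{-j})^{q-1}\le k^{q-1}$ buys. But you then run De Giorgi at level $l=\varepsilon k2^{-j_*}$ on a cylinder of that same length. That cylinder exceeds the intrinsic length $l^{q-1}\varrho^{2s}$ by the factor $\Lambda:=\delta(2^{j_*}/\varepsilon)^{q-1}$. Redoing the iteration of Lemma~\ref{lem1} in this regime gives $Y_{n+1}\le c\,b^n\Lambda^{a}Y_n^{1+a}$ with $a=\frac{2s}{N\mm}$. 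So the admissible initial density is $\nu\approx\Lambda^{-1}=\delta^{-1}(\varepsilon 2^{-j_*})^{q-1}$.

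This is not an artifact: smallness of the density on the long cylinder says nothing about a single subcylinder of intrinsic length, where the bad set may concentrate. Since $\nu$ decays exponentially in $j_*$ while Step 2 only gives $\gamma j_*^{-1/2}$, no choice of $j_*$ closes the argument.

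Shortening the cylinders to the scale of $l$ does not rescue the scheme either. On a cylinder of length $\approx\delta l^{q-1}\varrho^{2s}$, the time term in the energy estimate at level $k_j=\varepsilon k2^{-j}$ exceeds the diffusion term by $2^{(j_*-j)(q-1)}/\delta$. The bounds are then not uniform in $j$, and the telescoping that produces $j_*^{-1/2}$ breaks down.

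The paper avoids the ladder of levels altogether (Lemma~\ref{lem4}). It works on short cylinders $B_{4\varrho}\times(\bar t-\delta(\sigma\varepsilon k)^{q-1}(4\varrho)^{2s},\bar t)$, translated so as to cover the whole interval. On each, the initial-energy Caccioppoli estimate \eqref{fc} is used at the single low level $\frac32\sigma\varepsilon k-\ell(t)$, and its right-hand side is $\lesssim(\sigma\varepsilon k)^{q+1}|B_{4\varrho}|$. The nonlocal good term is bounded below by pairing $w_-\ge\sigma\varepsilon k/4$ on $\{u\le\sigma\varepsilon k\}$ with $(u-k(t))_+\ge\varepsilon k/4$ on the set $\{u\ge\varepsilon k\}$ supplied by Step 1. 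This gives density at most $c\sigma/(\delta\alpha)$, linear in $\sigma$. On these matched cylinders the threshold of Lemma~\ref{lem1} is $\nu\approx\delta^\beta$, independent of $\sigma$, so taking $\sigma$ small closes the argument. You do mention the good term, but only parenthetically and inside the wrong cylinder and level structure.

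A second, independent gap concerns the tail in Step 3. With a fixed level $l$, the exterior part of the energy estimate contains $b^n\,l\int|A_n(t)|\,\mathrm{tail}(t)\,\dt$. Here $\mathrm{tail}(t)=\int_{\R^N\setminus B_{8\varrho}(x_0)}u_-(y,t)|y-x_0|^{-N-2s}\,\dy$ is only integrable in time. Bounding this by $b^n\,l\,|B_\varrho|\operatorname{Tail}(u_-;Q)$, as you propose, leaves an additive term without the factor $|A_n|$. The superlinear recursion $Y_{n+1}\le c\,b^nY_n^{1+a}$ then no longer follows. The same term is harmless in Steps 1 and 2, where nothing is iterated.

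The paper removes this term with time-dependent levels $k_n(t)=k_n-\ell(t)$, with $\ell$ as in \eqref{elll}. The term produced by $\ell'(t)$ in the time derivative of $\mathfrak g_-(u,k_n(t))$ compensates the tail contribution. The tail then enters only through $\ell\le k/4$, which is exactly the tail alternative. This is also the actual origin of $\Theta$ in \eqref{THETA}: it is the geometric factor $2\textnormal{\texttt{L}}\,(R/(R-\tilde\varrho))^{N+2s}$ with $R=2\varrho$, $\tilde\varrho=\varrho$, and has nothing to do with $2^{j_*(q-1)}$.
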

\noindent
The paper is organized as follows. In Section~\ref{sec2} we introduce the notation, recall the notion of weak solution, and collect the preliminary results used in the sequel. Section~\ref{sec3} is devoted to the integral Harnack inequality, while in Section~\ref{sec4} we establish the expansion of positivity.  Finally, Section~\ref{sec5} develops the time-mollification arguments needed to justify the use of time-dependent test functions.

\section{Preliminaries} \label{sec2}
\noindent
This section introduces the basic notation and collects several auxiliary results that will be used throughout the paper, including the local boundedness estimate needed in the proof of the integral Harnack inequality.
\subsection*{Notation}
The symbol $\gamma$ will
denote a generic positive constant, not necessarily the same at each occurrence, that can be determined by the data from the problem under consideration. The data are represented by the parameters 
$$N,s,q,\texttt{L}.$$
We use symbols "$\sim, \lesssim, \gtrsim$" with subscripts, to indicate that a certain inequality holds up to constants whose relevant dependencies are marked in the suffix. Let $\mathbb{N}_0 = \mathbb{N} \cup \{ 0 \}.$ With $B_R(x_0)$ we denote the open ball of center $x_0 \in \R^N$ and radius $R>0$. When $x_0=0$ we simply write $B_R$. Moreover, we define a general backward parabolic cylinder as
$$
Q_{R,\theta}(x_0, t_0):=B_R(x_0)\times(t_0-\theta,t_0], \qquad (x_0,t_0) \in \R^N \times \R.
$$
For $v \in L^1(\Omega_T, \R^k)$ and a measurable subset $U\subset \Omega$ with positive $\mathcal{L}^N$-measure, we define the slice-wise mean $[v]_U :(0,T) \to \R^k$ of $v$  on $U$ as
$$ [v]_U(t):= \fint_U v(\cdot,t)\,\dx = \frac{1}{|U|} \int_{U}v(\cdot,t)\,\dx, \qquad \text{for a.e. } t \in (0,T).$$
Similarly, for a measurable set $Q \subset \Omega_T$ of positive $\mathcal{L}^{N+1}$-measure, we define the mean value $[v]_Q$ of $v$ on $Q$ as
\begin{equation} \label{vq}
     [v]_Q := \fiint_Q v\,\dx\,\dt=\frac{1}{|Q|} \iint_{Q} v\,\dx\,\dt.
\end{equation}
For $s\in(0,1)$ and a domain $\Omega \subset \R^N$, we introduce the fractional Sobolev space
$W^{s,2}(\Omega)$ defined by
\[
W^{s,2}(\Omega)
:= \left\{
v\in L^2(\Omega) :
\iint_{\Omega \times \Omega}
\frac{|v(x)-v(y)|^2}{|x-y|^{N+2s}}\,\dx\,\dy < \infty
\right\},
\]
which is endowed with the norm
\[
\|v\|_{W^{s,2}(\Omega)}
:= \left(\int_{\Omega} |v|^2\,\dx\right)^{\frac12}
+ \left(\iint_{\Omega \times \Omega}
\frac{|v(x)-v(y)|^2}{|x-y|^{N+2s}}\,\dx\,\dy\right)^{\frac12}.
\]
We next give the notion of weak super(sub)-solution. In the definition below we allow the kernel to depend measurably on time, provided the bounds in \eqref{k} hold uniformly for almost every $t$.
\begin{definition} \label{def}
A measurable function $u: \R^N \times (0,T] \to \R$ satisfying
$$ u \in C_{\loc} (0, T; L^{q+1}_{\loc}( \Omega) ) \cap L^2_{\loc} (0, T; W^{s,2}_{\loc}( \Omega) ), $$
is a local, {weak super(sub)-solution} to the equation  \eqref{eq:PDE} in $\Omega_T$, if for every sub-interval $[t_1,t_2] \subset (0,T]$, every bounded open set $\tilde{\Omega} \subset \Omega$, it holds
\begin{align} \label{ineq:L_1_tail_condition}
    \int_{t_1}^{t_2} \int_{\R^N} \frac{|u(x,t)|}{1+|x|^{N+2s}}\,\dx\,\dt < \infty
\end{align}
and 
\begin{align}  \label{eq}
& \int_{{\tilde{\Omega}}} |u|^{q-1}u \phi\,\dx \big |_{t_1}^{t_2}-
\int_{t_1}^{t_2}\int_{ {\tilde{\Omega}}}  |u|^{q-1}u \, \partial_t\phi\,\dx\,\dt \notag \\ & \qquad \qquad + \int_{t_1}^{t_2} \iint_{\R^N \times \R^N}(u(x,t)-u(y,t))(\phi(x,t)-\phi(y,t)) K(x,y,t)\,\dx\, \dy \dt \geq (\leq) \, 0
\end{align}
is satisfied for all nonnegative test functions
\[
\phi \in  L^2_{\loc} (0, T; W^{s,2}_0({\tilde{\Omega}}) ) \cap  W^{1,q+1}_{\loc} (0, T; L^{q+1}({\tilde{\Omega}})) .
\]
A function that is both a weak super- and sub-solution is called a weak solution.
\end{definition}
\subsection*{Useful results}
Now, for $u,k \in \mathbb{R}$ and $q>0$ we define
\begin{align}\label{GG}
    \mathfrak{g}_{\pm}(u,k):= \pm q\int_k^u \vert \theta \vert^{q-1}(\theta-k)_{\pm}\,\d\theta,
\end{align}
where
\begin{align*}
    (\theta-k)_+:=\max\{\theta-k,0\},\qquad (\theta-k)_-:=\max\{-(\theta-k),0\}.
\end{align*}
For the following lemma we refer to~\cite[Lemma 2.2]{BDL_21}.
\begin{lemma} \label{2.3}
Let $q > 0$ and consider the function $\mathfrak{g}_{\pm}$ defined in  \eqref{GG}. Then there exists a constant $c \equiv c(q) > 0$ such that for all $a, b \in \mathbb{R}$ it holds
    \[
        \frac{1}{ c} \big( |a| + |b| \big)^{q-1} (a - b)_{\pm}^2 
        \leq \mathfrak{g}_{\pm}(a, b) 
        \leq  c \big( |a| + |b| \big)^{q-1} (a - b)_{\pm}^2.
    \]  
\end{lemma}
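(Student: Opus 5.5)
The proof is a case analysis, split by the sign of $q-1$ and according to whether $a,b$ have equal signs. By the symmetry $\mathfrak g_-(u,k)=\mathfrak g_+(-u,-k)$, obtained from the substitution $\theta\mapsto-\theta$ in \eqref{GG}, it suffices to treat $\mathfrak g_+$. If $a\le b$, then $(\theta-b)_+=0$ on $[a,b]$, so both sides of the inequality vanish. We may therefore assume $a>b$ and write
\[
\mathfrak g_+(a,b)=q\int_b^a|\theta|^{q-1}(\theta-b)\,\d\theta .
\]
The right-hand side $(|a|+|b|)^{q-1}(a-b)^2$ is homogeneous of degree $q+1$ under $(a,b)\mapsto(\lambda a,\lambda b)$ with $\lambda>0$, and so is $\mathfrak g_+$, since the substitution $\theta=\lambda\theta'$ gives exactly the factor $\lambda^{q+1}$. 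Hence the statement reduces to the claim that the ratio
\[
F(a,b):=\frac{\mathfrak g_+(a,b)}{(|a|+|b|)^{q-1}(a-b)^2}
\]
is bounded above and below by positive constants depending only on $q$, on the compact set $\{|a|+|b|=1,\ a>b\}$.

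The upper bound is easy when $q\ge1$. Since $|\theta|\le|a|+|b|$ on $[b,a]$, we get
\[
\mathfrak g_+(a,b)\le q(|a|+|b|)^{q-1}\int_b^a(\theta-b)\,\d\theta=\tfrac q2(|a|+|b|)^{q-1}(a-b)^2 .
\]
For the lower bound when $q\ge1$, restrict the integral to the half $[\tfrac{a+b}2,a]$ when $|a|\ge|b|$. On this interval $\theta-b\ge\frac{a-b}2$, and $|\theta|$ is comparable to $|a|$ on a subinterval of length proportional to $a-b$. One takes, for instance, the quarter of $[b,a]$ adjacent to $a$: there $|\theta|\ge|a|/4$ in all sign configurations, using $a-b\ge|a|$ when the signs differ. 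Since $|a|\ge\frac12(|a|+|b|)$, this yields $\mathfrak g_+\ge c^{-1}(|a|+|b|)^{q-1}(a-b)^2$. When $|b|>|a|$, the same argument works with the quarter adjacent to $b$. There the factor $\theta-b$ is smaller, so instead one uses the middle half, where $\theta-b\gtrsim a-b$, and checks that $|\theta|\gtrsim|b|$ on a set of proportional length. That check holds except when $b<0<a$; in that case $a-b\ge|b|$, and the portion of $[b,a]$ near the endpoint of larger modulus has length comparable to $|a|+|b|$.

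The case $0<q<1$ is the main obstacle, because $|\theta|^{q-1}$ is singular at $0$. The lower bound now follows from $|\theta|^{q-1}\ge(|a|+|b|)^{q-1}$ on $[b,a]$, exactly as the upper bound did for $q\ge1$. The upper bound requires controlling $\int_b^a|\theta|^{q-1}(\theta-b)\,\d\theta$ by $(|a|+|b|)^{q-1}(a-b)^2$. I would estimate $\theta-b\le a-b$, which gives
\[
\mathfrak g_+(a,b)\le q(a-b)\int_b^a|\theta|^{q-1}\,\d\theta=(a-b)\bigl(|a|^{q-1}a-|b|^{q-1}b\bigr).
\]
It then remains to prove the elementary inequality
\[
|a|^{q-1}a-|b|^{q-1}b\le c\,(|a|+|b|)^{q-1}(a-b).
\]
After normalizing $|a|+|b|=1$, this follows by continuity and compactness, once the near-diagonal case $a\approx b$ is handled. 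There the mean value theorem gives the derivative $q|\xi|^{q-1}$ with $|\xi|\approx|a|\approx\frac12$, which stays bounded away from the singularity at $0$.

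Alternatively, one can simply cite \cite[Lemma 2.2]{BDL_21}, as the paper does.
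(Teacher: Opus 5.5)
Your argument is correct. It necessarily takes a different route from the paper, because the paper gives no proof of this lemma: it only says ``we refer to \cite[Lemma 2.2]{BDL_21}''. What you provide is a self-contained elementary proof.

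After the reflection $\mathfrak g_-(u,k)=\mathfrak g_+(-u,-k)$ and the reduction to $a>b$, you split according to the sign of $q-1$. In each regime one inequality is just the pointwise bound $|\theta|\le\max\{|a|,|b|\}\le|a|+|b|$ on $[b,a]$, with the explicit constant $q/2$. The other inequality comes from one of two arguments:
\begin{itemize}
\item for $q\ge1$, localizing to a subinterval where $|\theta|\gtrsim\max\{|a|,|b|\}$;
\item for $0<q<1$, the bound $\mathfrak g_+(a,b)\le(a-b)\bigl(|a|^{q-1}a-|b|^{q-1}b\bigr)$ combined with the Lipschitz-type estimate for $x\mapsto|x|^{q-1}x$. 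Your mean value treatment of the near-diagonal regime correctly compensates for the fact that $\{|a|+|b|=1,\ a>b\}$ is not actually compact, since the points $a=b=\pm\frac12$ are missing.
\end{itemize}

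A route that treats all $q>0$ at once is to substitute $\theta=b+t(a-b)$. This gives $\mathfrak g_+(a,b)=q(a-b)^2\int_0^1 t\,|b+t(a-b)|^{q-1}\,\mathrm{d}t$, which essentially reduces the claim to the classical estimate $\int_0^1|b+t(a-b)|^{\alpha}\,\mathrm{d}t\simeq(|a|+|b|)^{\alpha}$ for $\alpha=q-1>-1$. Alternatively, integrating by parts gives $\mathfrak g_+(a,b)=\int_b^a\bigl(|a|^{q-1}a-|\theta|^{q-1}\theta\bigr)\,\mathrm{d}\theta$. One then applies the two-sided version of the same inequality for $x\mapsto|x|^{q-1}x$ that you already use when $q<1$. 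These routes are shorter once the auxiliary inequality is taken for granted. Yours is fully elementary and keeps the constants in the easy directions explicit.

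Two small points in your $q\ge1$ lower bound.
\begin{itemize}
\item On the quarter of $[b,a]$ adjacent to $a$ (when $|a|\ge|b|$), the bound on $|\theta|$ comes from the upper bound $a-b\le|a|+|b|\le2|a|$. It does not come from $a-b\ge|a|$, as you wrote. This gives $|\theta|\ge|a|/2$ there.
\item The detour through the middle half when $|b|>|a|$ is unnecessary, because no pointwise lower bound on $\theta-b$ is needed. Over any subinterval of $[b,a]$ of length $(a-b)/4$ one has $\int(\theta-b)\,\mathrm{d}\theta\ge(a-b)^2/32$. On the quarter adjacent to the endpoint of larger modulus, $|\theta|\ge\frac12\max\{|a|,|b|\}$ in every sign configuration, since $a-b\le2\max\{|a|,|b|\}$. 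This settles both cases at once.
\end{itemize}
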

\noindent
Let us now state the fast geometric convergence Lemma, see \cite[Chapter I, Lemma 4.1]{G}.
\begin{lemma}\label{fg}
    Let $(Y_n)_{n \in \mathbb{N}_0}$ be a sequence of positive real numbers satisfying the recursive inequalities
    \[
        Y_{n+1} \le c \, b^n \, Y_n^{1 + \alpha}
    \]
    where $c, b > 1$ and $\alpha>0$ are given numbers. 
    If 
    \[
        Y_0 \le c^{-1/\alpha} \, b^{-1/\alpha^2},
    \]
    then $Y_n \to 0$ as $n \to \infty$.
\end{lemma}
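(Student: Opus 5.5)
The plan is to prove by induction the quantitative decay
\[
Y_n\le Y_0\, b^{-n/\alpha}\qquad\text{for all } n\in\mathbb N_0 .
\]
Since $b>1$ and $\alpha>0$, this bound forces $Y_n\to0$ as $n\to\infty$, which is the claim. The geometric rate $b^{-n/\alpha}$ is chosen precisely to absorb the factor $b^n$ in the recursion. The superlinear power $1+\alpha$ produces an extra factor $b^{-n(1+\alpha)/\alpha}$, and this beats $b^n$ with room to spare.

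The base case $n=0$ is trivial. For the inductive step, I assume $Y_n\le Y_0 b^{-n/\alpha}$ and insert it into the recursive inequality:
\[
Y_{n+1}\le c\, b^n\, Y_n^{1+\alpha}\le c\, b^n\, Y_0^{1+\alpha}\, b^{-n(1+\alpha)/\alpha}
= \bigl(c\,Y_0^{\alpha}\bigr)\, Y_0\, b^{-n/\alpha}.
\]
It then suffices to have $c\,Y_0^\alpha\le b^{-1/\alpha}$, since this gives $Y_{n+1}\le Y_0 b^{-(n+1)/\alpha}$. Raising both sides to the power $1/\alpha$ shows this is equivalent to $Y_0\le c^{-1/\alpha}b^{-1/\alpha^2}$, which is exactly the hypothesis of Lemma~\ref{fg}.

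There is no genuine obstacle; the only point requiring care is the choice of the decay rate. Taking $b^{-n/\alpha}$ is what makes the exponents of $b$ collapse to $-n/\alpha$. The leftover factor $c\,Y_0^\alpha$ must be at most $b^{-1/\alpha}$, which is where the smallness threshold on $Y_0$ comes from. The positivity of the $Y_n$ is used only to make the powers $Y_n^{1+\alpha}$ and $Y_0^{\alpha}$ well defined and monotone.
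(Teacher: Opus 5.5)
Your proof is correct: the induction $Y_n\le Y_0\,b^{-n/\alpha}$, with $b^{n}\,b^{-n(1+\alpha)/\alpha}=b^{-n/\alpha}$ and the requirement $c\,Y_0^{\alpha}\le b^{-1/\alpha}$ being equivalent to the hypothesis on $Y_0$, is the standard argument for this lemma. The paper gives no proof of its own and cites \cite[Chapter I, Lemma 4.1]{G}, whose proof is this same induction.
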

\noindent
We next recall the parabolic Sobolev embedding lemma, see \cite[Proposition A.3]{L1}.
\begin{lemma}[Parabolic embedding]\label{em}
Let $s \in (0,1)$ and 
\begin{equation} \label{m}
    \mm:=\frac{2}{N}(N+(q+1)s).
\end{equation}
For any function
\[
u \in L^2\big(t_1,t_2;W^{s,2}(B_R)\big) \cap L^\infty\big(t_1,t_2;L^{q+1}(B_R)\big),
\]
which is compactly supported in $B_{(1-d)R}$ for some $d \in (0,1)$ and for almost every $t \in (t_1, t_2)$, there holds
\[
\begin{aligned}
&\int_{t_1}^{t_2} \int_{B_R} |u(x,t)|^{\mm}\,\dx\,\dt \\
&\qquad \leq c \Bigg( R^{2s} \int_{t_1}^{t_2} \iint_{B_R \times B_R}
\frac{|u(x,t)-u(y,t)|^2}{|x-y|^{N+2s}}\,\dy\,\dx\,\dt 
+ \frac{1}{d^{N+2s}} \int_{t_1}^{t_2} \int_{B_R} |u(x,t)|^2\,\dx\,\dt \Bigg) \\
&\qquad \qquad\times \Bigg( \operatorname*{ess\,sup}_{t_1 < t < t_2} \fint_{B_R} |u(x,t)|^{q+1}\,\dx \Bigg)^{\frac{2s}{N}},
\end{aligned}
\]
for a positive constant $c \equiv c(s,q,N)$.
\end{lemma}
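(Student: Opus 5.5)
The statement is the parabolic fractional Sobolev embedding (Lemma~\ref{em}). I would derive it slice by slice in time: a fractional Sobolev inequality for the cut-off function combined with a Hölder interpolation against the $L^{q+1}$ norm, and then an integration in $t$. Throughout, fix a.e.\ $t\in(t_1,t_2)$ and write $v=u(\cdot,t)$, which is supported in $B_{(1-d)R}$.

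\textbf{Step 1: Sobolev inequality for $v$.} I first extend $v$ by zero to all of $\R^N$ and apply the fractional Sobolev inequality with $2^*_s=\frac{2N}{N-2s}$ (assuming $N>2s$, automatic since $N\ge2$):
\[
\|v\|_{L^{2^*_s}(\R^N)}^2\le c\iint_{\R^N\times\R^N}\frac{|v(x)-v(y)|^2}{|x-y|^{N+2s}}\,\dx\,\dy .
\]
Next I split the right-hand side into $B_R\times B_R$ plus twice the interaction between $B_R$ and $\R^N\setminus B_R$. In the interaction term, $v(y)=0$. For $x\in B_{(1-d)R}$ and $|y|\ge R$ we have $|x-y|\ge dR$, so
\[
\int_{\R^N\setminus B_R}|x-y|^{-N-2s}\,\dy\le c\,(dR)^{-2s}\le c\,d^{-N-2s}R^{-2s}.
\]
This gives
\[
\|v\|_{L^{2^*_s}}^2\le c\Bigl([v]^2_{W^{s,2}(B_R)}+d^{-N-2s}R^{-2s}\|v\|^2_{L^2(B_R)}\Bigr).
\]
This is exactly the bracket in the statement, divided by $R^{2s}$.

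\textbf{Step 2: interpolation.} I write $|v|^{\mm}=|v|^2\,|v|^{\mm-2}$ with $\mm-2=\frac{2s(q+1)}{N}$, and apply Hölder with exponents $\frac{N}{N-2s}$ and $\frac{N}{2s}$:
\[
\int_{B_R}|v|^{\mm}\,\dx\le\Bigl(\int|v|^{2^*_s}\Bigr)^{\frac{N-2s}{N}}\Bigl(\int_{B_R}|v|^{q+1}\Bigr)^{\frac{2s}{N}}
=\|v\|^2_{L^{2^*_s}}\Bigl(\int_{B_R}|v|^{q+1}\Bigr)^{\frac{2s}{N}}.
\]
The exponents match because $2\cdot\frac{N}{N-2s}\cdot\frac{N-2s}{N}=2$ and $(\mm-2)\frac{N}{2s}=q+1$.

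\textbf{Step 3: normalisation and integration in time.} I replace $\int_{B_R}|v|^{q+1}$ by $|B_R|\fint_{B_R}|v|^{q+1}$, which produces the factor $|B_R|^{2s/N}=c\,R^{2s}$. Multiplying this against the bracket from Step~1 turns $R^{-2s}$ into $1$ in front of the lower-order term and $1$ into $R^{2s}$ in front of the Gagliardo seminorm, as in the statement. I then bound the averaged $L^{q+1}$ factor by its essential supremum over $t$ and integrate in $t$ over $(t_1,t_2)$, which yields the claim.

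\textbf{Main obstacle.} The only delicate point is the tail of the Gagliardo seminorm outside $B_R$ in Step~1, i.e.\ tracking the $d$-dependence correctly. The estimate $|x-y|\ge dR$ handles this, and the power $d^{-2s}$ it produces is dominated by the stated $d^{-N-2s}$.
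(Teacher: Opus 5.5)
Your proof is correct. Extending by zero, bounding the cross term through $|x-y|\ge dR$, and applying the fractional Sobolev inequality (valid since $N\ge 2>2s$) followed by H\"older with exponents $\bigl(\frac{N}{N-2s},\frac{N}{2s}\bigr)$ gives the claim, even with the sharper factor $d^{-2s}$ in place of $d^{-N-2s}$.

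The paper does not prove this lemma but quotes it from \cite[Proposition A.3]{L1}. Your argument is the standard Sobolev-plus-interpolation route for such parabolic embeddings, and the $d^{-N-2s}$ in the statement is simply what the cruder bound $|x-y|\ge d|y|$ (for $x\in B_{(1-d)R}$, $y\notin B_R$) would give.
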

\noindent
Now, for $\texttt{x}>0$, define
\begin{equation} \label{Lambda}
        \Lambda_\texttt{x}:=2s\texttt{x}+N(1-q).
\end{equation} We conclude with the quantitative local boundedness estimate  taken from \cite{FDF}.
 \begin{theorem} \label{th1} Assume that $u$ is a local, nonnegative, weak sub-solution to \eqref{eq:PDE} in $\Omega_T$.  If $1<q<q_c$, with $q_c$ defined in \eqref{critical-range},  then on any parabolic cylinder $ Q_{\rho,\sigma}(x_0,t_0)  \Subset \Omega_T$, it holds
\begin{align} \label{tesi1}
\operatorname*{ess\,sup}_{Q_{  \varrho/2,  \sigma/2}(x_0,t_0)} u \leq \gamma \left ( \frac{{ \varrho}^{2s}}{\sigma} \right )^{\frac{N}{\Lambda_{ {\rr}}}}\left (  \fiint_{Q_{{ \varrho},\sigma}(x_0,t_0)} u^{ {\rr}}(x,t)\,\dx\,\dt\right )^{\frac{2s}{\Lambda_{ {\rr}}}} + \gamma \left ( \frac{\sigma}{ \varrho^{2s}}\right )^{\frac{1}{q-1}} +  \gamma   \textnormal{Tail}^{\frac{1}{q}}(u; Q_{  \varrho/2, \sigma}(x_0,t_0)),
\end{align}
for every $ {\rr} \in [1, \mm]$ such that  $\Lambda_{\rr}>0$, with $\mm$ as in \eqref{m}   and $\Lambda_{\rr}$ defined according to  \eqref{Lambda},
and for a constant $\gamma \equiv \gamma (s,q,N, \textnormal{\texttt{L}})$.
If $ q \geq q_c$, assume moreover that
$$ u \in L^{ {\rr}}_{\loc}(\Omega_T), \  \text{ for some }   {\rr}> \frac{N}{2s}(q-1).$$
Then,  \eqref{tesi1} holds true
on any parabolic cylinder $ Q_{ \varrho,\sigma} (x_0,t_0)  \Subset \Omega_T$, for a constant $\gamma \equiv \gamma(s,q,N, \textnormal{\texttt{L}}, {\rr})$.
 \end{theorem}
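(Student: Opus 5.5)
The statement immediately above is Theorem \ref{th1}, the quantitative local boundedness estimate quoted from \cite{FDF}. Inside this paper it is an imported tool, but the proof I would give is a De Giorgi iteration built on an energy estimate and Lemma \ref{em}.

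\textbf{Step 1: Caccioppoli estimate.} I would test \eqref{eq} with $\phi=(u-k)_+\zeta^2$, where $\zeta$ is a space-time cutoff. The time-mollification of Section~\ref{sec5} justifies this choice. Lemma \ref{2.3} turns the time term into $\int \mathfrak g_+(u,k)\zeta^2$, which is comparable to $\int (u+k)^{q-1}(u-k)_+^2\zeta^2$. The nonlocal term splits into two parts. The local part gives the $W^{s,2}$ seminorm of $(u-k)_+\zeta$, plus errors of size $\|\nabla\zeta\|_\infty^2\int (u-k)_+^2$. The far part contributes
\[
\sup_{x\in \operatorname{supp}\zeta}\int_{\mathbb R^N\setminus B}\frac{(u-k)_+(y)}{|x-y|^{N+2s}}\,\dy\int (u-k)_+\zeta^2 .
\]
After integration in time, this far contribution is controlled by the $L^1$ tail $\operatorname{Tail}(u;Q_{\varrho/2,\sigma})$ times $\sup_t\int(u-k)_+$. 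A lower bound on $k$ then makes it comparable to the other terms.

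\textbf{Step 2: De Giorgi iteration.} I would use shrinking cylinders $Q_n$ from $Q_{\varrho,\sigma}$ down to $Q_{\varrho/2,\sigma/2}$ and levels $k_n=k(1-2^{-n-1})$. Because $u\ge k_n/2$ on $\{u>k_n\}$, the weight $(u+k)^{q-1}$ is at least of order $k^{q-1}$. So the time term controls $k^{q-1}\sup_t\int(u-k_n)_+^2$, and also $\sup_t\int (u-k_n)_+^{q+1}$.

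Lemma \ref{em} then bounds $\iint (u-k_n)_+^{\mm}$ by the energy times $(\sup_t\fint (u-k_n)_+^{q+1})^{2s/N}$. Chebyshev's inequality converts this into a recursive inequality for $Y_n=\fiint_{Q_n}(u-k_n)_+^{\rr}$ of the form $Y_{n+1}\le c\,b^n Y_n^{1+\alpha}$. Here $\alpha>0$ exactly when $\Lambda_{\rr}>0$, and the constant $c$ carries powers of $k$ and of $\sigma/\varrho^{2s}$.

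Lemma \ref{fg} gives $Y_n\to0$ once $k$ is large compared with the intrinsic term $(\varrho^{2s}/\sigma)^{N/\Lambda_{\rr}}(\fiint u^{\rr})^{2s/\Lambda_{\rr}}$. The mismatch between the time scale $k^{q-1}\varrho^{2s}$ and $\sigma$ is absorbed by also requiring $k\ge(\sigma/\varrho^{2s})^{1/(q-1)}$. The tail term is handled by requiring $k^q\ge\operatorname{Tail}$.

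\textbf{Main obstacle.} The hardest step is running the iteration with the lower exponent $\rr<\mm$ rather than the natural exponent. Since the result is restricted to $q<q_c$, I would first iterate with $\rr=\mm$, or with an $L^{q+1}$-type quantity, to get a non-sharp bound on nested cylinders. I would then interpolate, $\sup u\le \epsilon\sup u+ C_\epsilon(\fiint u^{\rr})^{\dots}$, and absorb the $\epsilon\sup u$ term with a standard covering and iteration lemma on radii between $\varrho/2$ and $\varrho$. The condition $q<q_c$ is what keeps the exponent bookkeeping positive, i.e.\ $\Lambda_{\rr}>0$ for some admissible $\rr\in[1,\mm]$.

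For $q\ge q_c$, the extra assumption $u\in L^{\rr}_{\loc}(\Omega_T)$ with $\rr>\frac N{2s}(q-1)$ directly gives $\Lambda_{\rr}>0$, and the same iteration applies.
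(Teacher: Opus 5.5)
\textbf{There is a genuine gap: the far-field (tail) term.} This is exactly where the $L^1$-in-time tail \eqref{tail} matters. Your bound leaves on the right-hand side
\[
c\,2^{n(N+2s)}\operatorname{Tail}(u;Q_{\varrho/2,\sigma})\,\esssup_{t}\int_{B_n}(u-k_{n+1})_+(x,t)\,\dx ,
\]
which is a supremum in time of a slice integral.
\begin{itemize}
\item It is not controlled by $Y_n=\fiint_{Q_n}(u-k_n)_+^{\rr}$, nor by any space-time integral over $Q_n$.
\item If you instead iterate an energy quantity containing $\esssup_t\int_{B_n}\mathfrak g_+(u,k_n)\,\dx$, the best available bound is $\esssup_t\int_{B_n}(u-k_{n+1})_+\,\dx\le(2^{n+2}/k)^q\esssup_t\int_{B_n}(u-k_n)_+^{q+1}\,\dx$. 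This is \emph{linear} in the previous energy, with a coefficient of order $b^n\operatorname{Tail}/k^q$ that blows up with $n$. So no recursion $Y_{n+1}\le c\,b^nY_n^{1+\alpha}$ comes out.
\item A constant lower bound such as $k^q\ge\operatorname{Tail}$ cannot repair this. Where $u$ is slightly above $k$, $k^q(u-k)_+$ is not dominated by $\mathfrak g_+(u,k)\approx(u+k)^{q-1}(u-k)_+^2$.
\end{itemize}
Your scheme does close for a tail that is $L^\infty$ in time. Set $\operatorname{tail}(t):=\int_{\R^N\setminus B_R(x_0)}u_+(y,t)|y-x_0|^{-N-2s}\,\dy$. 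Then the far term is at most $c\,b^n\sup_t\operatorname{tail}(t)\iint(u-k)_+$, a space-time integral, and $k^q\gtrsim\sigma\sup_t\operatorname{tail}(t)$ suffices. That result, however, is strictly weaker than \eqref{tesi1}.

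\textbf{The missing idea is the moving truncation level of \cite{FDF}.} This paper reuses it in \eqref{elll} and Remark \ref{remark1}. Test with $(u-k(t))_+\zeta^2$, where $k(t)=k+\ell(t)$ and $\ell(t)^q=\Theta\int_{t_0-\sigma}^t\operatorname{tail}(\tau)\,\d\tau$. The chain rule gives
\[
\partial_tu^q\,(u-k(t))_+=\partial_t\mathfrak g_+(u,k(t))+k'(t)\,(u^q-k(t)^q)_+ .
\]
Since $k\ge\ell$ and $q>1$, the extra term satisfies
\[
k'(u^q-k^q)_+\ge q\,k^{q-1}\ell'\,(u-k)_+\ge(\ell^q)'\,(u-k)_+=\Theta\operatorname{tail}(t)\,(u-k)_+ .
\]
With $\Theta=2\texttt{L}\,(R/(R-\tilde\varrho))^{N+2s}$ and $\operatorname{supp}\zeta\subset B_{\tilde\varrho}$, this cancels the far-field contribution pointwise in time. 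The De Giorgi iteration then gives $u\le k+\ell(t_0)$, which is where the $\operatorname{Tail}^{1/q}$ in \eqref{tesi1} comes from.

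Because $\Theta$ cannot depend on $n$, $R$ and $\tilde\varrho$ must stay fixed along the iteration. The annulus $B_R\setminus B_n$ then produces the $n$-dependent term
\[
c\,2^{n(N+2s)}\varrho^{-2s}\,\esssup_t\fint_{B_R}(u-k(t))_+\,\dy\iint(u-k(t))_+ ,
\]
as in \eqref{fc}. This term is harmless once $k^q\gtrsim\sigma\varrho^{-2s}\esssup_t\fint_{B_R}u\,\dy$, and that average is finite since $u\in C(L^{q+1}_{\loc})$. Bound the average by $M=\sup u$ on a larger cylinder. Then $(M\sigma\varrho^{-2s})^{1/q}\le\epsilon M+C_\epsilon(\sigma/\varrho^{2s})^{1/(q-1)}$ is absorbed in the radius iteration you already plan.

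The step you flag as the main obstacle is in fact routine, and your treatment of it is sound. Iterate first at $\rr=\mm$, which is possible precisely because $\Lambda_\mm>0\iff q<q_c$, then interpolate down to $\rr$ and absorb. One small correction: $\Lambda_\rr>0$ governs the sign of the power of $k$ in the smallness condition, not the sign of $\alpha$.
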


\section{Integral Harnack Inequality} \label{sec3}
\noindent
The proof of Theorem \ref{lq-linf} relies on the following $L^q$-$L^q$ estimate of Proposition \ref{lq-lq}. 
Its proof  relies on an anomalous energy estimate contained in Lemma \ref{la}.
We introduce the quantities that will be used in the sequel.
For $B_{2\rho}(x_0)\times(\bar \tau,\tau)\Subset \Omega_T$, we set
\begin{equation}\label{def:P_negative_with_L_1_tail}
    P_{-}= \max \biggl\{1, \,\biggl( \frac{\tau-\bar \tau}{\rho^{2s}} \biggr)^{-\frac{q}{q-1}} 
    \int_{\bar \tau}^\tau \int_{B_{\rho/2}^c(x_0)} 
    \frac{u_{-}(x,t)}{|x-x_0|^{N+2s}} \,\dx\,\dt\biggr\}
\end{equation}
and
\begin{align} \label{lambda}
    \lambda:=\frac{\Lambda_{q}}{q} = \frac{N}{q}(1-q) +2s, \quad \nu := \Bigl( \frac{\tau - \bar{\tau}}{\rho^{2s}} \Bigl)^{\frac{1}{q-1}}.
\end{align}
The following lemma is proved by using a suitable test function, whose admissibility is justified by the time-mollification procedure described in Section \ref{sec5}, in the spirit of \cite[Lemma 4.1]{CCI}. 
\begin{lemma} \label{la} Let $q>1$ and let $u$ be  a local, nonnegative, weak super-solution to \eqref{eq:PDE} in $\Omega_T$. Then for any cylinder $B_{4 \rho}(x_0) \times (\bar{\tau},\tau) \Subset \Omega_{T}$ and any $0<\sigma<\sigma'<1$, there exists a positive constant $\gamma$ depending only on the data such that
    \begin{equation}\label{tst}
\begin{aligned}
&\int_{\bar{\tau}}^\tau (t-\bar{\tau})^{\frac12}
\iint_{B_{\sigma \rho}(x_0) \times B_{\sigma \rho}(x_0)}
\frac{(u(x,t)-u(y,t))^2}{|x-y|^{N+2s}}
\bigl(\max \{u(x,t), u(y,t) \}+\nu\bigr)^{-\frac{q+1}{2}}
 \,\dx \,\dy\, \dt
\\
& \quad \leq
\gamma \varrho^s
\max\left\{
\frac{1}{(\sigma'-\sigma)^2},
\frac{1}{(1-\sigma')^{N+2s}}
\right\}
\left[
\sup_{t\in[\bar{\tau},\tau]}
\int_{B_\varrho} u^q(x,t)\,\dx
+
\left(\frac{\tau-\bar{\tau}}{\varrho^\lambda}\right)^{\frac{q}{q-1}}
\right]^{\frac{q+1}{2q}}
\left(\frac{\tau - \bar{\tau}}{\varrho^\lambda}\right)^{\frac12}
P_- ,
\end{aligned}
\end{equation}
    where $P_{-}$ and $\lambda, \nu $ are defined according to \eqref{def:P_negative_with_L_1_tail} and  \eqref{lambda}.
\end{lemma}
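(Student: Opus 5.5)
We test the weak formulation \eqref{eq} for the super-solution $u$ with
\[
\phi(x,t)=(t-\bar\tau)^{\frac12}\,\bigl(u(x,t)+\nu\bigr)^{-\frac{q+1}{2}+1-q}\cdot\zeta^2(x)\quad\text{or rather}\quad \phi=(t-\bar\tau)^{\frac12}\,\zeta^2(x)\,\bigl(u+\nu\bigr)^{\frac{1-q}{2}},
\]
where $\zeta\in C_c^\infty(B_{\sigma'\varrho}(x_0))$, $0\le\zeta\le1$, $\zeta\equiv1$ on $B_{\sigma\varrho}(x_0)$, and $|D\zeta|\lesssim ((\sigma'-\sigma)\varrho)^{-1}$. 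The exponent $\frac{1-q}{2}<0$ is chosen so that $\partial_t u^q\cdot(u+\nu)^{\frac{1-q}{2}}\approx \partial_t \mathcal{G}(u)$ with $\mathcal{G}(u)\approx (u+\nu)^{\frac{q+1}{2}}$. The shift $\nu>0$ keeps the negative power bounded, and the weight $(t-\bar\tau)^{1/2}$ kills the initial boundary term. Admissibility of this test function (it involves $\partial_t u$) is obtained through the time mollification of Section~\ref{sec5}. We therefore carry out the computation formally.

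\textbf{Time term.} Writing $\partial_t u^q\,(u+\nu)^{\frac{1-q}2}=\partial_t\mathcal G(u)$ with $\mathcal G(u)=q\int_0^u\theta^{q-1}(\theta+\nu)^{\frac{1-q}2}\,d\theta\le \gamma\,(u+\nu)^{\frac{q+1}{2}}$, integration by parts in time gives a nonnegative term at $t=\tau$, which we discard. The remaining term is
\[
\tfrac12\int(t-\bar\tau)^{-\frac12}\int\zeta^2\mathcal G(u)\,dx\,dt\le \gamma(\tau-\bar\tau)^{\frac12}\sup_t\int_{B_\varrho}(u+\nu)^{\frac{q+1}2}\,dx .
\]
By Hölder with exponent $\frac{2q}{q+1}$, this is at most $\gamma(\tau-\bar\tau)^{1/2}|B_\varrho|^{\frac{q-1}{2q}}\bigl[\sup_t\int u^q+\nu^q\varrho^N\bigr]^{\frac{q+1}{2q}}$. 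Since $\nu^q\varrho^N=\bigl(\frac{\tau-\bar\tau}{\varrho^{\lambda}}\bigr)^{\frac q{q-1}}$ by the definitions \eqref{lambda}, and $\varrho^{N\frac{q-1}{2q}}=\varrho^{s-\lambda/2}$, this produces exactly the bracket and the factor $\varrho^s\bigl(\frac{\tau-\bar\tau}{\varrho^\lambda}\bigr)^{1/2}$ in \eqref{tst}.

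\textbf{Nonlocal term, diagonal part.} On $B_{\sigma'\varrho}\times B_{\sigma'\varrho}$ we use the standard algebraic inequality for negative powers, as for logarithmic or "anomalous" estimates with exponent $\frac{1-q}{2}\in(-\infty,0)$ (as in \cite[Lemma 4.1]{CCI}):
\[
(a-b)\bigl(\zeta_1^2(a+\nu)^{-\epsilon}-\zeta_2^2(b+\nu)^{-\epsilon}\bigr)\le -c\,\frac{(a-b)^2}{(\max\{a,b\}+\nu)^{1+\epsilon}}\min\{\zeta_1,\zeta_2\}^2+C(\zeta_1-\zeta_2)^2\bigl((a+\nu)^{1-\epsilon}+(b+\nu)^{1-\epsilon}\bigr),
\]
with $\epsilon=\frac{q-1}{2}$, so that $1+\epsilon=\frac{q+1}{2}$ and $1-\epsilon=\frac{3-q}2$.

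The good term gives the left side of \eqref{tst}. The error is bounded by $\gamma(\sigma'-\sigma)^{-2}\varrho^{-2s}\int(t-\bar\tau)^{1/2}\int_{B_\varrho}(u+\nu)^{\frac{3-q}{2}}$. This is the main obstacle: $\frac{3-q}{2}$ may exceed $q$ or be unrelated to the bracket. I would handle it by writing $(u+\nu)^{\frac{3-q}2}\le \nu^{1-q}(u+\nu)^{\frac{q+1}2}$, which is valid since $\frac{3-q}{2}-\frac{q+1}{2}=1-q<0$. Then $\varrho^{-2s}\nu^{1-q}(\tau-\bar\tau)=1$, and we are back to the same sup-term as in the time part. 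This is precisely why $\nu$ is chosen as $\bigl(\frac{\tau-\bar\tau}{\varrho^{2s}}\bigr)^{\frac1{q-1}}$.

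\textbf{Nonlocal term, off-diagonal part.} For $x\in B_{\sigma'\varrho}$ and $y\notin B_{\varrho}$, the positive part of $u(y)$ contributes favourably because $\phi(y)=0$ and $u(x)-u(y)\le u(x)$. We are left with
\[
\int(t-\bar\tau)^{\frac12}\int\zeta^2(u+\nu)^{\frac{1-q}2}\bigl(u(x)+u_-(y)\bigr)K\,dy\,dx.
\]
The part with $u(x)$ is handled as above, using $(u+\nu)^{\frac{3-q}{2}}$, the kernel bound $\int_{B_\varrho^c}K\lesssim((1-\sigma')\varrho)^{-2s}$, and the factor $(1-\sigma')^{-N-2s}$. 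The $u_-$ part is bounded using $(u+\nu)^{\frac{1-q}2}\le\nu^{\frac{1-q}{2}}$, $|x-y|\gtrsim(1-\sigma')|y-x_0|$, and $(t-\bar\tau)^{1/2}\le(\tau-\bar\tau)^{1/2}$. This gives $\gamma(1-\sigma')^{-N-2s}\varrho^N\nu^{\frac{1-q}2}(\tau-\bar\tau)^{\frac12}\operatorname{Tail}(u_-)$. After inserting the definition of $P_-$ and comparing powers of $\varrho$ and $\tau-\bar\tau$, this is bounded by $\varrho^s\bigl(\frac{\tau-\bar\tau}{\varrho^\lambda}\bigr)^{1/2}\bigl(\frac{\tau-\bar\tau}{\varrho^\lambda}\bigr)^{\frac{q+1}{2(q-1)}}P_-$, which lies below the right side since the bracket dominates $\bigl(\frac{\tau-\bar\tau}{\varrho^\lambda}\bigr)^{q/(q-1)}$.

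Since $P_-\ge1$, all terms combine into \eqref{tst}. The exponent bookkeeping in this last comparison is the step I expect to need the most care.
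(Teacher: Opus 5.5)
Your proposal is correct and follows essentially the paper's route: the same test function $(t-\bar\tau)^{\frac12}(u+\nu)^{\frac{1-q}{2}}\zeta^2$, justified by time mollification, and the same split into a time term, a local part and an exterior part bounded via $u(x)+u_-(y)$. Your pointwise inequality has good term with $\min\{\zeta(x),\zeta(y)\}^2$ and error $(u+\nu)^{\frac{3-q}{2}}(\zeta(x)-\zeta(y))^2$; combined with $(u+\nu)^{\frac{3-q}{2}}\le \nu^{1-q}(u+\nu)^{\frac{q+1}{2}}$ it gives the same error term as the paper's convexity/weighted-Young argument. It should be applied on all of $B_\varrho\times B_\varrho$, so that pairs with $y\in B_\varrho\setminus B_{\sigma'\varrho}$ are also covered.

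One sign slip needs fixing. In the time term you must keep the boundary term $(\tau-\bar\tau)^{\frac12}\int\zeta^2\mathcal G(u(\cdot,\tau))\,\dx$, since it sits on the right of the upper bound for the good term, and discard $-\frac12\int(t-\bar\tau)^{-\frac12}\int\zeta^2\mathcal G(u)\,\dx\,\dt\le 0$ — not the other way round. The kept term is controlled by the same quantity $\gamma(\tau-\bar\tau)^{\frac12}\sup_t\int_{B_\varrho}(u+\nu)^{\frac{q+1}{2}}\,\dx$, so the conclusion is unchanged.
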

\begin{proof}
    Assume for simplicity $(x_0,\bar{\tau})=(0,0)$. Consider $0<\sigma<\sigma'<1$, and set for brevity 
\begin{equation} \label{BB}
    B = B_\rho(0), \ \check B = B_{\sigma\rho}(0), \ \hat B = B_{\sigma'\rho}(0),
\end{equation}
then $\check B\subset\hat B\subset B$. Let $\xi\in C^1_c(\hat{B})$ be a function such that \
\begin{align}\label{property_of_tst_fct} 
\xi = 1 \ \text{in $\check B$,} \ 0 \leq \xi \leq 1, \ |\nabla\xi| \lesssim \frac{1}{(\sigma'-\sigma)\rho} \ \text{in $\R^N$},
\end{align}
and
\[A_t := \big\{(x,y)\in \hat B\times \hat B:\,u(x,t)>u(y,t),\,\xi(x)>\xi(y)\big\}, \quad \text{for all }  t \in [0,\tau].\] 
We define,  for all $(x,t)\in\R^N\times(0,T)$, the special test function
\begin{equation} \label{tf}
    \varphi(x,t) = t^\frac{1}{2}(u(x,t)+\nu)^{-\frac{q-1}{2}}\xi^2(x), 
\end{equation}
To justify the use of \(\varphi\) as a testing function, we employ the time mollification procedure described in Lemma \ref{mol}. We then obtain the following inequality:

\begin{equation}\label{tst1}
\begin{aligned}
    0 &\leq \int_{\hat{B}}\xi^2(x) \tau^{\frac{1}{2}} \biggl(\int_0^{u(x,\tau)} \theta^{q-1}(\theta+\nu)^{\frac{1-q}{2}} \, \mathrm{d}\theta \biggl) \,\dx - \int_0^\tau  \frac{t^{-\frac{1}{2}}}{2} \int_{\hat{B}} \xi^2 \biggl(\int_0^{u(x,t)} \theta^{q-1} (\theta+\nu)^{\frac{1-q}{2}} \, \mathrm{d}\theta \biggl) \,\dx\,\dt \\
    &\quad 
    +  \gamma \int_0^\tau \iint_{\R^N \times \R^N} {(u(x,t)-u(y,t))} t^{\frac{1}{2}} [(u(x,t)+\nu)^{\frac{1-q}{2}}\xi^{2}(x)-(u(y,t)+\nu)^{\frac{1-q}{2}}\xi^{2}(y)]K(x,y,t) \,\dx\,\dy\,\dt \\
    &=: I_{1} + I_{2},
\end{aligned}
\end{equation}
where we denote by $I_1$ the sum of the first two terms on the right-hand side and by $I_2$ the remaining nonlocal term.
We first estimate \(I_1\) from above. Since its second term is nonpositive, we may discard it. Using the fact that $q>1$ and $\nu >0$, we obtain that
\begin{align} \label{t1}
     \int_{0}^{u} \theta^{q-1} (\theta+\nu)^{\frac{1-q}{2}} \,\mathrm{d}\theta \leq \gamma u^{\frac{q+1}{2}}.
\end{align}
Then, by  \eqref{t1}, Hölder's inequality with exponents $\left (\frac{2q}{q+1}, \frac{2q}{q-1} \right )$ and the fact that $\xi \leq 1$, we estimate $I_1$ as
\begin{align} \label{i1}
    I_{1} &\leq \gamma  \tau^{\frac{1}{2}} \int_{B} (u(x,\tau)+\nu)^{\frac{q+1}{2}} \xi^{2}(x) \,\dx \notag \\
    &\leq \gamma  \tau^{\frac{1}{2}} \Bigg[ \sup_{t \in [0,\tau]} \int_{B} (u(x,t)+\nu)^{q} \,\dx \Bigg]^{\frac{q+1}{2q}} \vert B \vert^{\frac{q-1}{2q}} \notag \\
    &\leq \gamma \rho^{s} \Bigl( \frac{\tau}{\rho^{\lambda}} \Bigl)^{\frac{1}{2}} \Bigg[ \sup_{t \in [0,\tau]} \int_{B} u^{q}(x,t) \,\dx + \Bigl( \frac{\tau}{\rho^{\lambda}} \Bigl)^{\frac{q}{q-1}} \Bigg]^{\frac{q+1}{2q}}.
\end{align}
Now, we proceed with the diffusion term $I_{2}$. By symmetry, we are able to rewrite this term into the following two parts
\begin{align} \label{i2}
    I_{2} &= 2 \gamma \int_0^\tau \iint_{A_{t}^{+}} {(u(x,t)-u(y,t))} t^{\frac{1}{2}} [(u(x,t)+\nu)^{\frac{1-q}{2}}\xi^{2}(x)-(u(y,t)+\nu)^{\frac{1-q}{2}}\xi^{2}(y)]K(x,y,t) \,\dx\,\dy\,\dt  \notag \\
    &\qquad 
    +2 \gamma \int_0^\tau \iint_{A_{t}^{-}} {(u(x,t)-u(y,t))} t^{\frac{1}{2}} [(u(x,t)+\nu)^{\frac{1-q}{2}}\xi^{2}(x)-(u(y,t)+\nu)^{\frac{1-q}{2}}\xi^{2}(y)]K(x,y,t) \,\dx\,\dy\,\dt \notag \\
    &=: I_{3} + I_{4},
\end{align}
where
\begin{align*}
    A_{t}^{+} &:= \big\{(x,y)\in \R^N \times \R^N :\,u(x,t)>u(y,t),\, u(y,t) \geq 0 \big\}, \\
    A_{t}^{-} &:= \big\{(x,y)\in \R^N \times \R^N :\,u(x,t)>u(y,t) ,\, u(y,t) < 0 \big\}.
\end{align*}
On $A_t^+ \cap \{\xi(x)\leq \xi(y)\}$, by the convexity of $r \mapsto (r+\nu)^{\frac{1-q}{2}}$, we observe that
\begin{align*}
& (u(x,t)-u(y,t))
\Big[(u(x,t)+\nu)^{\frac{1-q}{2}}\xi^2(x)
-(u(y,t)+\nu)^{\frac{1-q}{2}}\xi^2(y)\Big] \\
&\qquad \leq
(u(x,t)-u(y,t))
\Big[(u(x,t)+\nu)^{\frac{1-q}{2}}
-(u(y,t)+\nu)^{\frac{1-q}{2}}\Big]\xi^2(y)\\
&\qquad \leq
-\gamma (u(x,t)-u(y,t))^2
(u(x,t)+\nu)^{-\frac{q+1}{2}}\xi^2(y).
\end{align*}
Therefore, for $I_3$ we obtain
\begin{align} \label{i3}
    I_{3} &\leq
    -\gamma \int_0^\tau
    \iint_{A_t^+\cap\{\xi(x)\leq\xi(y)\}}
    \frac{(u(x,t)-u(y,t))^2}{|x-y|^{N+2s}}
    t^{\frac12}(u(x,t)+\nu)^{-\frac{q+1}{2}}\xi^2(y)
    \,\dx\,\dy\,\dt \notag\\
    &\qquad
    +2\gamma \int_0^\tau \iint_{A_{t}^{+} \cap \{ \xi(x) > \xi(y) \}}
    \frac{u(x,t)-u(y,t)}{|x-y|^{N+2s}} t^{\frac{1}{2}}
    [(u(x,t)+\nu)^{\frac{1-q}{2}} -(u(y,t)+\nu)^{\frac{1-q}{2}}]
    \xi^{2}(x) \,\dx\,\dy\,\dt \notag \\
    &\qquad
    + 2\gamma \int_0^\tau \iint_{A_{t}^{+} \cap \{ \xi(x) > \xi(y) \}}
    \frac{u(x,t)-u(y,t)}{|x-y|^{N+2s}} t^{\frac{1}{2}}
    (u(y,t)+\nu)^{\frac{1-q}{2}} ( \xi^{2}(x) - \xi^{2}(y))
    \,\dx\,\dy\,\dt \notag \\
    &=: I_{5}^{-}+I_{5}+I_{6}.
\end{align}
Observe that, by the convexity of $r \mapsto (r+\nu)^{\frac{1-q}{2}}$, we get
\begin{align*}
    (u(x,t) + \nu)^{ \frac{1-q}{2}} - (u(y,t) + \nu )^{ \frac{1-q}{2}} \leq \frac{1-q}{2} ( u(x,t) + \nu )^{- \frac{q+1}{2}} (u(x,t) - u(y,t)),
\end{align*}
hence, the term $I_{5}$ can be bounded by
\begin{align}\label{ineq:estimate_for_I_5_in_Lemma_2_1_proof}
    I_{5} \leq -\gamma  \int_0^\tau \iint_{A_{t}^{+} } \frac{(u(x,t)-u(y,t))^2}{|x-y|^{N+2s}} t^{\frac{1}{2}} (u(x,t)+\nu)^{-\frac{q+1}{2}}  \max \left \{\xi^{2}(x),\xi^2(y) \right \} \,\dx\,\dy\,\dt.
\end{align}
Recalling the following inequality: for all $a \geq b \geq 0$, $\epsilon \in (0,1)$ there exists $c >0$ such that
\begin{align*}
    a^{2} - b^{2} \leq \epsilon a^{2} + \frac{c}{\varepsilon} (a-b)^{2},
\end{align*}
and applying it with $a = \xi(x) > \xi(y) =b$, for any $(x,y) \in A_{t}^{+} \cap \{ \xi(x) > \xi(y) \}$, and $\epsilon \in (0,1)$ defined by
\begin{align*}
    \epsilon := \delta \frac{u(x,t) - u(y,t)}{u(y,t) + \nu} \bigg[ \frac{u(y,t) + \nu}{u(x,t) + \nu} \bigg]^{\frac{q+1}{2}} \leq \bigg[ \frac{u(y,t) + \nu}{u(x,t) + \nu} \bigg]^{\frac{q-1}{2}} < 1,
\end{align*}
with $\delta \in (0,1)$ that will be defined later,
we obtain
\begin{align*}
    \xi^{2}(x) - \xi^{2}(y) &\leq \delta \frac{u(x,t) - u(y,t)}{u(y,t) + \nu} \bigg[ \frac{u(y,t) + \nu}{u(x,t) + \nu} \bigg]^{\frac{q+1}{2}} \xi^{2}(x) \\
    &\qquad 
    + \frac{\gamma}{\delta} \bigg[ \frac{u(y,t) + \nu}{u(x,t) - u(y,t)} \bigg] \bigg[ \frac{u(x,t) + \nu}{u(y,t) + \nu} \bigg]^{\frac{q+1}{2}} (\xi(x) - \xi(y))^{2},
\end{align*}
where $\gamma>0$ is independent of $\delta$. While recalling that $u(y,t) + \nu \geq \nu$, the previous inequality is used to estimate $I_{6}$ by
\begin{align}\label{ineq:estimate_for_I_6_in_Lemma_2_1_proof}
    I_{6} &\leq \gamma \delta \int_0^\tau \iint_{A_{t}^{+} \cap \{ \xi(x) > \xi(y) \}}  \frac{(u(x,t)-u(y,t))^{2}}{|x-y|^{N+2s}} t^{\frac{1}{2}} (u(x,t)+\nu)^{-\frac{q+1}{2}} \xi^{2}(x)  \,\dx\,\dy\,\dt \notag \\
    &\qquad 
    + \frac{\gamma}{\delta \nu^{q-1}} \int_0^\tau \iint_{A_{t}^{+} \cap \{ \xi(x) > \xi(y) \}} t^{\frac{1}{2}} (u(x,t)+\nu)^{\frac{q+1}{2}} \frac{(\xi(x) - \xi(y))^{2}}{|x-y|^{N+2s}} \,\dx\,\dy\,\dt.
\end{align}
Now, let $\delta \in (0,1)$ be small enough such that the first term of \eqref{ineq:estimate_for_I_6_in_Lemma_2_1_proof} can be absorbed into \eqref{ineq:estimate_for_I_5_in_Lemma_2_1_proof}. Hence, using  \eqref{ineq:estimate_for_I_5_in_Lemma_2_1_proof} and  \eqref{ineq:estimate_for_I_6_in_Lemma_2_1_proof}, we estimate $I_3$ in  \eqref{i3} as
\begin{align} \label{i3.2}
    I_{3} &\leq
    -\gamma \int_0^\tau
    \iint_{A_t^+\cap\{\xi(x)\leq\xi(y)\}}
    \frac{(u(x,t)-u(y,t))^2}{|x-y|^{N+2s}}
    t^{\frac12}(u(x,t)+\nu)^{-\frac{q+1}{2}}\xi^2(y)
    \,\dx\,\dy\,\dt \notag\\
    &\qquad
    - \gamma \int_0^\tau
    \iint_{A_{t}^{+} \cap \{ \xi(x) > \xi(y) \} }
    \frac{(u(x,t)-u(y,t))^{2}}{|x-y|^{N+2s}}
    t^{\frac{1}{2}} (u(x,t)+\nu)^{-\frac{q+1}{2}} \xi^{2}(x)
    \,\dx\,\dy\,\dt \notag \\
    &\qquad
    + \frac{\gamma}{\nu^{q-1}} \int_0^\tau
    \iint_{A_{t}^{+} \cap \{ \xi(x) > \xi(y) \}}
    t^{\frac{1}{2}} (u(x,t)+\nu)^{\frac{q+1}{2}}
    \frac{(\xi(x) - \xi(y))^{2}}{|x-y|^{N+2s}}
    \,\dx\,\dy\,\dt.
\end{align}
Call $I_8$ the last term in \eqref{i3.2} and observe that for all $(x,y) \in A_{t}^{+} \cap \{ \xi(x) > \xi(y) \}$ we have $\xi(x) >0$, hence $x \in \hat{B}$. Then, by splitting the space integral we deduce
\begin{align} \label{i8}
    I_{8} &\leq \frac{\gamma}{\nu^{q-1}} \int_0^\tau \iint_{\hat{B} \times B } t^{\frac{1}{2}} (u(x,t)+\nu)^{\frac{q+1}{2}} \frac{(\xi(x) - \xi(y))^{2}}{|x-y|^{N+2s}} \,\dx\,\dy\,\dt \notag \\
    &\qquad 
    + \frac{\gamma}{\nu^{q-1}} \int_0^\tau \iint_{\hat{B} \times B^c } t^{\frac{1}{2}} (u(x,t)+\nu)^{\frac{q+1}{2}} \frac{(\xi(x) - \xi(y))^{2}}{|x-y|^{N+2s}} \,\dx\,\dy\,\dt \notag \\
    &=: I_{9} + I_{10}.
\end{align}
By the gradient bound on $\xi$ in  \eqref{property_of_tst_fct}, it follows that
\begin{align*}
    I_{9} &\leq \frac{\gamma}{( \sigma^{\prime} - \sigma)^{2} \rho^{2} \nu^{q-1}} \int_0^\tau \iint_{\hat{B} \times B } t^{\frac{1}{2}} \frac{(u(x,t)+\nu)^{\frac{q+1}{2}} }{|x-y|^{N+2s-2}} \,\dx\,\dy\,\dt \\
    &\leq \frac{\gamma}{( \sigma^{\prime} - \sigma)^{2} \rho^{2} \nu^{q-1}} \Bigg[ \int_0^\tau \int_{\hat{B}} t^{\frac{1}{2}} (u(x,t)+\nu)^{\frac{q+1}{2}} \,\dx\,\dt \Bigg] \Bigg[ \sup_{x \in \hat{B}} \int_{B} \frac{\dy}{|x-y|^{N+2s-2}} \Bigg]\\
    &\leq \frac{\gamma}{( \sigma^{\prime} - \sigma)^{2} \rho^{2} \nu^{q-1}} \underbrace{\Bigg[ \int_0^\tau \int_{\hat{B}} t^{\frac{1}{2}} (u(x,t)+\nu)^{\frac{q+1}{2}} \,\dx\,\dt \Bigg]}_{=:J} \Bigg[ \int_{B_{2 \rho}(0)} \frac{\dz}{|z|^{N+2s-2}} \Bigg]. \nonumber
\end{align*}
At this point we focus on estimating $J$. By H\"older's inequality with exponents $\left ( \frac{2q}{q+1}, \frac{2q}{q-1}\right )$, it holds
\begin{align}\label{ineq:estimate_for_J_1_in_Lemma_2_1_proof}
    J &\leq \Bigg( \int_0^\tau \int_{\hat{B}}  (u(x,t)+\nu)^{q} \,\dx\,\dt \Bigg)^{\frac{q+1}{2q}} \Bigg( \int_0^\tau \int_{\widehat{B}} t^{\frac{q}{q-1}} \,\dx\,\dt \Bigg)^{\frac{q-1}{2q}} \nonumber \\
    &\leq \Bigg( \int_0^\tau \int_{\hat{B}}  (u(x,t)+\nu)^{q} \,\dx\,\dt \Bigg)^{\frac{q+1}{2q}} \Bigg( \int_0^\tau  t^{\frac{q}{q-1}}  \,\dt \vert B \vert \Bigg)^{\frac{q-1}{2q}} \nonumber \\
    &\leq \gamma \tau^{\frac{2q-1}{2q}} \rho^{\frac{N(q-1)}{2q}} \Bigg(  \int_0^\tau \int_{\hat{B}} u^{q}(x,t) \,\dx\,\dt + \nu^{q} \tau \rho^{N} \Bigg)^{\frac{q+1}{2q}},
\end{align}
Hence, using  \eqref{ineq:estimate_for_J_1_in_Lemma_2_1_proof}, we obtain for $I_{9}$ the bound
\begin{align}\label{ineq:estimate_for_I_9_in_Lemma_2_1_proof}
    I_{9} &\leq \gamma \frac{  \tau^{\frac{2q-1}{2q}} \rho^{-s-\frac{\lambda}{2}}}{( \sigma^{\prime} - \sigma)^{2}  \nu^{q-1}} \Bigg(  \int_0^\tau \int_{\hat{B}} u^{q}(x,t) \,\dx\,\dt + \nu^{q} \tau \rho^{N} \Bigg)^{\frac{q+1}{2q}}.
\end{align}
In order to estimate $I_{10}$ in  \eqref{i8}, observe for all $x \in \hat{B}, y \in B^c$ that
\begin{align} \label{t22}
    \vert x-y \vert \geq \vert y \vert - \vert x \vert \geq (1-\sigma^{\prime} ) \vert y \vert.
\end{align}
Therefore, by $ \xi \leq 1$,  \eqref{t22} and \eqref{ineq:estimate_for_J_1_in_Lemma_2_1_proof}, we obtain
\begin{align}\label{ineq:estimate_for_I_10_in_Lemma_2_1_proof}
    I_{10} &\leq \frac{\gamma}{( 1 - \sigma^{\prime})^{N+2s}  \nu^{q-1}} \int_0^\tau \iint_{\widehat{B} \times B^c } t^{\frac{1}{2}} \frac{(u(x,t) + \nu )^{\frac{q+1}{2}}}{\vert y \vert^{N+2s}} \,\dx\,\dy\,\dt \nonumber \\
    &\leq \frac{\gamma}{( 1 - \sigma^{\prime})^{N+2s}  \nu^{q-1}} \Bigg[ \int_0^\tau \int_{\widehat{B} } t^{\frac{1}{2}} (u(x,t)+\nu)^{\frac{q+1}{2}} \,\dx\,\dt \Bigg] \Bigg[ \int_{B^c } \frac{\dy}{\vert y \vert^{N+2s}} \Bigg] \nonumber \\
    &\leq \gamma \frac{\ \tau^{\frac{2q-1}{2q}} \rho^{\frac{N(q-1)}{2q}}}{( 1 - \sigma^{\prime})^{N+2s}  \nu^{q-1} \rho^{2s}}  \Bigg(  \int_0^\tau \int_{\hat{B}} u^{q}(x,t) \,\dx\,\dt + \nu^{q} \tau \rho^{N} \Bigg)^{\frac{q+1}{2q}} \nonumber \\
    &\leq \gamma\frac{  \tau^{\frac{2q-1}{2q}} \rho^{-s-\frac{\lambda}{2}}}{( 1 - \sigma^{\prime})^{N+2s}  \nu^{q-1}}  \Bigg(  \int_0^\tau \int_{\hat{B}} u^{q}(x,t) \,\dx\,\dt + \nu^{q} \tau \rho^{N} \Bigg)^{\frac{q+1}{2q}}.
\end{align}
Then  \eqref{ineq:estimate_for_I_9_in_Lemma_2_1_proof} and \eqref{ineq:estimate_for_I_10_in_Lemma_2_1_proof} give us for $I_{8}$ in  \eqref{i8} the bound
\begin{align}\label{ineq:estimate_for_I_8_in_Lemma_2_1_proof}
    I_{8} \leq \gamma\frac{  \tau^{\frac{2q-1}{2q}} \rho^{-s-\frac{\lambda}{2}}}{  \nu^{q-1}} \max \bigg\{ \frac{1}{( 1 - \sigma^{\prime})^{N+2s}}, \frac{1}{( \sigma^{\prime} - \sigma)^{2}} \bigg\} \Bigg(  \int_0^\tau \int_{\hat{B}} u^{q}(x,t) \,\dx\,\dt + \nu^{q} \tau \rho^{N} \Bigg)^{\frac{q+1}{2q}}.
\end{align}
Combining the estimates from above, while recalling the definition of $\nu$ and estimating the integral appearing in \eqref{ineq:estimate_for_I_8_in_Lemma_2_1_proof} further, taking the supremum with respect to time, yields for $I_{3}$ in  \eqref{i3.2} the estimate
\begin{align} \label{i3.3}
    I_{3} &\leq     -\gamma \int_0^\tau
    \iint_{A_t^+\cap\{\xi(x)\leq\xi(y)\}}
    \frac{(u(x,t)-u(y,t))^2}{|x-y|^{N+2s}}
    t^{\frac12}\Big[ u(x,t)+ \Bigl( \frac{\tau}{\rho^{2s}} \Bigl)^{\frac{1}{q-1}} \Big]^{-\frac{q+1}{2}}\xi^2(y)
    \,\dx\,\dy\,\dt \notag\\ & \qquad - \gamma \int_0^\tau \iint_{A_{t}^{+} \cap \{ \xi(x) > \xi(y) \}} \frac{(u(x,t)-u(y,t))^{2}}{|x-y|^{N+2s}} t^{\frac{1}{2}} \Big[ u(x,t)+ \Bigl( \frac{\tau}{\rho^{2s}} \Bigl)^{\frac{1}{q-1}} \Big]^{-\frac{q+1}{2}} \xi^{2}(x)  \,\dx\,\dy\,\dt \notag \\
    &\qquad 
    + \gamma \tau^{\frac{1}{2}} \rho^{s - \frac{\lambda}{2}} \max \bigg\{ \frac{1}{( 1 - \sigma^{\prime})^{N+2s}}, \frac{1}{( \sigma^{\prime} - \sigma)^{2}} \bigg\} \Bigg(  \sup_{t \in [0,\tau]} \int_{\hat{B}} u^{q}(x,t) \,\dx  + \Bigl( \frac{\tau}{\rho^{2s}} \Bigl)^{\frac{q}{q-1}}  \rho^{N} \Bigg)^{\frac{q+1}{2q}} 
    .
\end{align}
This completes the estimate of $I_3$ in  \eqref{i2}. We next turn to $I_4$. 
For every $(x,y)\in A_t^{-}$, we have $u(y,t)<0$. Since $u\geq 0$ in $B$, 
it necessarily follows that $y\in B^c$, and hence $\xi(y)=0$. Therefore, we estimate $I_4$ as follows:
\begin{align}\label{ineq:estimate_for_I_4_in_Lemma_2_1_proof}
    I_{4} &\leq \gamma\int_0^\tau \iint_{A_{t}^{-}} \frac{u(x,t) + u_{-}(y,t) }{\vert x-y \vert^{N+2s}} t^{\frac{1}{2}} ( u(x,t) + \nu )^{\frac{1-q}{2}} \xi^{2}(x) \,\dx\,\dy\,\dt \nonumber \\
    &\leq \gamma \frac{1}{\nu^{q-1}} \int_0^\tau \iint_{\hat{B} \times B^c} t^{\frac{1}{2}} \frac{( u(x,t) + \nu )^{\frac{q+1}{2}}}{\vert x-y \vert^{N+2s}} \,\dx\,\dy\,\dt \nonumber \\
    &\qquad 
    + \frac{\gamma}{\nu^{q}} \int_0^\tau \iint_{\hat{B} \times B^c} t^{\frac{1}{2}} \frac{( u(x,t) + \nu )^{\frac{q+1}{2}}}{\vert x-y \vert^{N+2s}} u_{-}(y,t) \,\dx\,\dy\,\dt \nonumber \\
    &=: I_{11} + I_{12}.
\end{align}
First, the term $I_{11}$ can be estimated similarly to  \eqref{ineq:estimate_for_I_10_in_Lemma_2_1_proof}. Estimating the resulting time integral by means of the supremum yields
\begin{align*}
    I_{11} \leq \gamma \frac{ \tau^{\frac{1}{2}} \rho^{s-\frac{\lambda}{2}}}{( 1 - \sigma^{\prime})^{N+2s}  }  \Bigg(  \sup_{t \in [0,\tau]} \int_{\hat{B}} u^{q}(x,t) \,\dx  + \nu^{q}  \rho^{N} \Bigg)^{\frac{q+1}{2q}}.
\end{align*}
For $I_{12}$ we rely on a tail estimate for $u_{-}$. By  \eqref{t22}, we obtain 
\begin{align*}
    I_{12} &\leq \frac{\gamma}{(1-\sigma^{\prime})^{N+2s} \nu^{q}} \int_0^\tau t^{\frac{1}{2}} \Bigg[ \int_{\hat{B}} ( u(x,t) + \nu )^{\frac{q+1}{2}} \,\dx  \Bigg] \Bigg[ \int_{B^c} \frac{u_{-}(y,t)}{\vert y  \vert^{N+2s}} \,\dy  \Bigg] \\
    &\leq \gamma\frac{ \tau^{\frac{1}{2}}}{(1-\sigma^{\prime})^{N+2s} \nu^{q}} \int_0^\tau  \Bigg[\int_{B_{\rho/2}^c} \frac{u_-(y,t)}{|y|^{N+2s}}\,\dy\Bigg] \Bigg[ \int_{\hat{B}} ( u(x,t) + \nu )^{\frac{q+1}{2}} \,\dx  \Bigg] \,\dt \\
    &\leq \gamma\frac{\tau^{\frac{1}{2}}}{(1-\sigma^{\prime})^{N+2s} \nu^{q}} \textnormal{Tail}\bigl(u_-;B_{\rho/2}\times(0,\tau]\bigr) \Bigg[ \sup_{t \in [0,\tau]} \int_{\hat{B}} ( u(x,t) + \nu )^{\frac{q+1}{2}} \,\dx  \Bigg] \\
    &\leq \gamma\frac{ \tau^{\frac{1}{2}}}{(1-\sigma^{\prime})^{N+2s} \nu^{q}} \textnormal{Tail}\bigl(u_-;B_{\rho/2}\times(0,\tau]\bigr) \Bigg[ \sup_{t \in [0,\tau]} \int_{\hat{B}} ( u(x,t) + \nu )^{q} \,\dx  \Bigg]^{\frac{q+1}{2q}} \vert \hat{B} \vert^{\frac{q-1}{2q}} \\
    &\leq \gamma\frac{ \tau^{\frac{1}{2}} \rho^{\frac{N(q-1)}{2q}}}{(1-\sigma^{\prime})^{N+2s} \nu^{q}} \Bigg[ \sup_{t \in [0,\tau]} \int_{\hat{B}}  u^{q}(x,t) \,\dx  + \nu^{q} \rho^{N}  \Bigg]^{\frac{q+1}{2q}}   \textnormal{Tail}\bigl(u_-;B_{\rho/2}\times(0,\tau]\bigr) \\
    &\leq \gamma\frac{ \tau^{\frac{1}{2}} \rho^{s - \frac{\lambda}{2}}}{(1-\sigma^{\prime})^{N+2s} } \Bigg[ \sup_{t \in [0,\tau]} \int_{\hat{B}}  u^{q}(x,t) \,\dx  + \nu^{q} \rho^{N}  \Bigg]^{\frac{q+1}{2q}}   \Bigg[ \frac{1}{\nu^{q}} \textnormal{Tail}\bigl(u_-;B_{\rho/2}\times(0,\tau]\bigr) \Bigg].
\end{align*}
Plugging the estimates of $I_{11}$ and $I_{12}$ into  \eqref{ineq:estimate_for_I_4_in_Lemma_2_1_proof}, recalling the definitions of $P_{-}$ and $\nu$ in  \eqref{def:P_negative_with_L_1_tail}, \eqref{lambda}, yields
\begin{align} \label{i4}
    I_{4} &\leq \gamma\frac{ \tau^{\frac{1}{2}} \rho^{s - \frac{\lambda}{2}}}{(1-\sigma^{\prime})^{N+2s} } \Bigg[ \sup_{t \in [0,\tau]} \int_{\hat{B}}  u^{q}(x,t) \,\dx  + \nu^{q} \rho^{N}  \Bigg]^{\frac{q+1}{2q}} P_{-} \notag \\
    &\leq \gamma\frac{  \rho^{s} }{(1-\sigma^{\prime})^{N+2s} }  \Bigg[ \sup_{t \in [0,\tau]} \int_{\hat{B}}  u^{q}(x,t) \,\dx  + \nu^{q} \rho^{N}  \Bigg]^{\frac{q+1}{2q}} \Bigl( \frac{\tau}{\rho^{\lambda}} \Bigl)^{\frac{1}{2}} P_{-}.
\end{align}
Therefore, by  \eqref{i3.3} and  \eqref{i4}, we estimate $I_2$ in  \eqref{i2} as
\begin{align} \label{i212}
    I_{2} &= I_{3} + I_{4} \notag  \\
    &\leq - \gamma \int_0^\tau \iint_{A_{t}^{+} } \frac{(u(x,t)-u(y,t))^{2}}{|x-y|^{N+2s}} t^{\frac{1}{2}} \Big[ u(x,t)+ \Bigl( \frac{\tau}{\rho^{2s}} \Bigl)^{\frac{1}{q-1}} \Big]^{-\frac{q+1}{2}} \max\{\xi^{2}(x), \xi^2(y) \}  \,\dx\,\dy\,\dt \notag \\
    &\qquad 
    +  \gamma  \rho^{s}  \max \bigg\{ \frac{1}{( 1 - \sigma^{\prime})^{N+2s}}, \frac{1}{( \sigma^{\prime} - \sigma)^{2}} \bigg\} \Bigg[ \sup_{t \in [0,\tau]} \int_{\hat{B}}  u^{q}(x,t) \,\dx  + \Bigl( \frac{\tau}{\rho^{\lambda}} \Bigl)^{\frac{q}{q-1}}  \Bigg]^{\frac{q+1}{2q}} \Bigl( \frac{\tau}{\rho^{\lambda}} \Bigl)^{\frac{1}{2}} P_{-}.
\end{align}
Finally, by combining the estimates \eqref{i1} of $I_1$ and \eqref{i212} of $I_2$ defined in \eqref{tst1}, we obtain 
\begin{equation} \label{fest}
\begin{aligned}
&\int_0^\tau t^{\frac12}
\iint_{A_t^+}
\frac{(u(x,t)-u(y,t))^2}{|x-y|^{N+2s}}
\bigl(u(x,t)+\nu\bigr)^{-\frac{q+1}{2}}
\max\bigl\{\xi^2(x),\xi^2(y)\bigr\}
 \,\dx \,\dy\, \dt
\\
& \quad \leq
\gamma \varrho^s
\max\left\{
\frac{1}{(\sigma'-\sigma)^2},
\frac{1}{(1-\sigma')^{N+2s}}
\right\}
\left[
\sup_{t\in[0,\tau]}
\int_{B_\varrho} u^q(x,t)\,\dx
+
\left(\frac{\tau}{\varrho^\lambda}\right)^{\frac{q}{q-1}}
\right]^{\frac{q+1}{2q}}
\left(\frac{\tau}{\varrho^\lambda}\right)^{\frac12}
P_- ,
\end{aligned}
\end{equation} for a positive constant $\gamma$ depending only on the data, using the properties of $\xi$ this is exactly \eqref{tst}.
We observe that swapping the variables $x$ and $y$ we obtain \eqref{fest} with  the domain $\R^N \times \R^N \cap \{u(y,\tau)>u(x,\tau) \}$ in place of $A_t^{+}$ and with the term $(u(y,t)+\nu)^{-\frac{q+1}{2}}$.

\noindent
Therefore, summing the two inequalities we get the thesis.
\end{proof}
\noindent
Now, we use Lemma \ref{la} to prove the following $L^q$-$L^q$ estimate. 
\begin{proposition}
\label{lq-lq}
    Let $q>1$ and let $u$ be  a local, nonnegative, weak solution to \eqref{eq:PDE} in $\Omega_T$. Then for any cylinder $B_{4 \rho}(x_0) \times (\bar{\tau},\tau) \Subset \Omega_{T}$, there exists a positive constant $\gamma$ depending only on the data such that
    \begin{align} \label{prop3.2}
        \sup_{t \in [\bar{\tau},\tau]} \int_{B_{\rho}(x_0)} u^{q}(x,t) \,\dx & \leq \gamma \inf_{t \in [\bar{\tau},\tau]} \int_{B_{2 \rho}(x_0)} u^{q}(x,t) \,\dx + \gamma \Bigl( \frac{\tau-\bar{\tau}}{\rho^{\lambda}} \Bigl)^{\frac{q}{q-1}} \big( P_{-} + P_{-}^{\frac{q}{q-1}} \big) \notag \\ & \qquad + \gamma \rho^N\operatorname{Tail}(u_{+}, B_{\rho}(x_0) \times [\bar{\tau},\tau]),
    \end{align}
    where $P_{-}$ and $\lambda, \nu$ are defined according to \eqref{def:P_negative_with_L_1_tail} and  \eqref{lambda}.
\end{proposition}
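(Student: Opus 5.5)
We follow the classical scheme of \cite[Proposition 4.2]{CCI} (the $L^1$--$L^1$ Harnack estimate in the local fast diffusion case), adapted to the nonlinear time derivative $\partial_t u^q$. We take $(x_0,\bar\tau)=(0,0)$ and fix two times $t_1,t_2\in[0,\tau]$. The aim is to bound $\int_{B_{\sigma\rho}}u^q(x,t_1)\,\dx-\int_{B_{\sigma'\rho}}u^q(x,t_2)\,\dx$ by the right-hand side of \eqref{tst}, and then close the estimate by an iteration in the radii.

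\textbf{Step 1: testing with a cut-off.} We test \eqref{eq} with $\phi=\zeta(x)$, where $\zeta\in C^1_c(B_{\sigma'\rho})$, $\zeta=1$ on $B_{\sigma\rho}$ and $|\nabla\zeta|\lesssim 1/((\sigma'-\sigma)\rho)$. Since $u$ is a solution, this can be done in both time directions, and it gives
\[
\Bigl|\int \zeta\,u^q(x,t_1)\,\dx-\int\zeta\,u^q(x,t_2)\,\dx\Bigr|\le \int_0^\tau\iint |u(x,t)-u(y,t)|\,|\zeta(x)-\zeta(y)|\,K\,\dx\,\dy\,\dt .
\]
We split the double integral into a local part $B_\rho\times B_\rho$ and a nonlocal part, where one variable lies in $B_\rho^c$.

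\textbf{Step 2: the local part.} On $B_\rho\times B_\rho$ we apply Cauchy--Schwarz with the weight $t^{1/2}(\max\{u(x,t),u(y,t)\}+\nu)^{-\frac{q+1}{2}}$. One factor is exactly the left side of \eqref{tst}, which Lemma \ref{la} controls by
\[
\rho^s\,M\,\bigl(\tau/\rho^\lambda\bigr)^{1/2}P_-,\qquad M:=\Bigl[\sup_t\int_{B_\rho}u^q+(\tau/\rho^\lambda)^{\frac{q}{q-1}}\Bigr]^{\frac{q+1}{2q}},
\]
up to a factor depending on $\sigma,\sigma'$. The other factor is
\[
\int_0^\tau t^{-1/2}\iint \frac{(u+\nu)^{\frac{q+1}{2}}|\zeta(x)-\zeta(y)|^2}{|x-y|^{N+2s}}\,\dx\,\dy\,\dt .
\]
Using the gradient bound on $\zeta$ and Hölder's inequality as in the estimate of $J$, this is at most $\rho^{-s}\tau^{1/2}\rho^{-\lambda/2}M$, again up to a factor depending on $\sigma,\sigma'$.

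The product of the two square roots is therefore about
\[
M\,P_-^{1/2}=\Bigl[\sup_t\int_{B_\rho} u^q+(\tau/\rho^\lambda)^{\frac{q}{q-1}}\Bigr]^{\frac{q+1}{2q}}P_-^{1/2}.
\]
Since $\frac{q+1}{2q}<1$, Young's inequality with exponents $\frac{2q}{q+1}$ and $\frac{2q}{q-1}$ gives
\[
\varepsilon\sup_t\int_{B_\rho}u^q+\gamma_\varepsilon\,(\tau/\rho^\lambda)^{\frac{q}{q-1}}\bigl(1+P_-^{\frac{q}{q-1}}\bigr).
\]
This is where the powers $P_-+P_-^{\frac{q}{q-1}}$ in \eqref{prop3.2} come from.

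\textbf{Step 3: the nonlocal part.} The pairs with $x\in B_{\sigma'\rho}$ and $y\in B_\rho^c$ contribute at most $|u(x)|+u_+(y)+u_-(y)$, integrated against $|y|^{-N-2s}(1-\sigma')^{-N-2s}$.

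The terms in $u(x)$ are bounded by $\rho^{-2s}\int_0^\tau\int_{B_\rho}u$. By Hölder's inequality this is at most $\rho^{-2s}\tau\rho^{N(q-1)/q}\bigl(\sup_t\int u^q\bigr)^{1/q}$, and Young's inequality absorbs it as $\varepsilon\sup+\gamma(\tau/\rho^\lambda)^{\frac{q}{q-1}}$. A check of the exponents is needed here: $\tau^{\frac{q}{q-1}}\rho^{-\frac{2sq}{q-1}+N}=(\tau/\rho^\lambda)^{\frac{q}{q-1}}$ since $\lambda=2s-N(q-1)/q$, which matches.

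The $u_+$ term gives $\rho^N\operatorname{Tail}(u_+;B_\rho\times[0,\tau])$.

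The $u_-$ term gives $\rho^N\operatorname{Tail}(u_-)$. By the definition of $P_-$ this is at most $\rho^N(\tau/\rho^{2s})^{\frac{q}{q-1}}P_-=(\tau/\rho^\lambda)^{\frac{q}{q-1}}P_-$.

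\textbf{Step 4: closing the estimate.} We choose $t_2$ to be the time that realizes the infimum of $\int_{B_{2\rho}}u^q$ and take the supremum over $t_1$. Writing $Y(\sigma):=\sup_t\int_{B_{\sigma\rho}}u^q$, the previous steps give
\[
Y(\sigma)\le \varepsilon Y(1)+\gamma\,C(\sigma,\sigma')\,[\dots].
\]
The main obstacle is that Lemma \ref{la} puts $\sup\int_{B_\rho}u^q$ on the right-hand side at the full radius, while the left-hand side only covers $B_{\sigma\rho}$. To handle this, we apply the whole argument on nested balls $B_{r}\subset B_{R}$ with $\rho\le r<R\le 2\rho$, rescaling Lemma \ref{la} to the ball $B_R$. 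This yields
\[
Y(r)\le \tfrac12 Y(R)+\frac{\gamma}{(R-r)^{\beta}}\,\rho^{\beta}[\dots]
\]
for some $\beta$ depending on the data. The standard iteration lemma then removes the $\tfrac12 Y(R)$ term and gives \eqref{prop3.2} on $B_\rho$, with the infimum taken over $B_{2\rho}$.
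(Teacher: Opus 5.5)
Your proposal is correct and follows essentially the same route as the paper. Both proofs test with a spatial cut-off in both time directions, and both treat the local part by Cauchy--Schwarz with Lemma~\ref{la} followed by Young's inequality. On the nonlocal part both use H\"older/Young for $u(x)$ and the tails for $u_\pm$, absorbing $u_-$ into $P_-$. Both then choose $t_2$ at the infimum and iterate over nested radii to absorb $\varepsilon\sup$.

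The differences are cosmetic: you take absolute values instead of splitting into $t_1\le t_2$ and $t_1>t_2$, and you use the continuous iteration lemma instead of the dyadic recursion $S_n\le\epsilon S_{n+1}+\gamma_\epsilon b^n[\dots]$. Two small points need fixing. In Step~2 the product of the square roots is $M(\tau/\rho^\lambda)^{1/2}P_-^{1/2}$, not $MP_-^{1/2}$; your Young output is already the one obtained with this factor. Also, since \eqref{tst} only covers $B_{\sigma\rho}\times B_{\sigma\rho}$, Lemma~\ref{la} must be applied on the larger ball $B_R$ with inner radius equal to the radius of the local region, which your nested-ball setup allows.
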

\begin{proof}
Assume for simplicity $(x_0,\bar{\tau})=(0,0)$.  For $n \in \mathbb{N}_{0}$ we introduce
\begin{align*}
    \rho_{n} = \rho \sum_{i=0}^{n} \frac{1}{2^{i}}, \quad
    \check{\rho}_{n} = \frac{3 \rho_{n} + \rho_{n+1}}{4}, \quad
    \widehat{\rho}_{n} = \frac{\rho_{n} + 3 \rho_{n+1}}{4},
\end{align*}
and the balls
\begin{align*}
    B_{n} = B_{\rho_{n}} (0), \quad \check{B}_n = B_{\check{\rho}_{n}} (0), \quad \widehat{B}_{n} = B_{\widehat{\rho}_{n}} (0).
\end{align*}
Then, $B_{n} \subset \check{B}_n \subset \widehat{B}_{n} \subset B_{n + 1} \subset B_{2 \rho}$. Let $\xi \in C_{c}^{\infty}(\check{B}_{n})$ be such that
\begin{align*}
    \xi = 1 \text{ in } {B}_{n},\quad 0 \leq \xi \leq 1,\quad \vert \nabla \xi \vert \lesssim \frac{ 2^{n}}{\rho} \text{ in } \mathbb{R}^{N}.
\end{align*}
Choosing as test function $\phi:=\xi$ in \eqref{eq}, for $t_1<t_2$ we obtain
\begin{align} \label{3.24}
    0
    &= \int_{\check{B}_{n}} \vert u \vert^{q-1} u
    \xi(x) \,\dx \bigg\vert_{t_1}^{t_2} 
    + \int_{t_1}^{t_2}
    \iint_{\mathbb{R}^{N} \times \mathbb{R}^{N}}
    (u(x,t)-u(y,t))
    (\xi(x)-\xi(y))
    K(x,y,t)
    \,\dx\,\dy\,\dt .
\end{align}
If instead $t_1>t_2$, we apply the weak formulation in the time interval
$(t_2,t_1)$ and multiply the resulting identity by $-1$. Therefore,
\eqref{3.24} continues to hold also in this case. Now, set
\begin{align*}
      S_n:= \sup_{ t \in [0 ,\tau]} \int_{B_{n}} u^{q}(x,t) \,\dx 
\end{align*}
and choose $t_2$ to satisfy
\begin{align*}
    \int_{B_{2\rho}} u^{q}(x,t_2) \,\dx = \inf_{t \in [0,\tau]} \int_{B_{2\rho}} u^{q}(x,t) \,\dx =: I.
\end{align*}
Therefore, by the arbitrariness of $t_1$ in  \eqref{3.24}, we get from  \eqref{k}, \eqref{3.24}, 
\begin{align}\label{ineq:prelim_estimate_of_S_n_+_1}
    S_{n} \leq I + \int_{t_1}^{t_2} \iint_{\mathbb{R}^{N} \times \mathbb{R}^{N}}   (u(x,t) - u(y,t)) ( \xi (x) - \xi (y) )K(x,y,t)  \,\dx\,\dy\,\dt.
\end{align}
Denote by $J_1$ the diffusion term on the right-hand side.  Assume first that $t_1\leq t_2$. Using the properties of $\xi$ and the symmetry of the integrand, we split $J_1$ as follows
\begin{align} \label{jj1}
    J_{1} &\leq \gamma \int_{t_1}^{t_2} \iint_{\widehat{B}_{n} \times \widehat{B}_{n}}   (u(x,t) - u(y,t)) ( \xi (x) - \xi (y) )K(x,y,t)  \,\dx\,\dy\,\dt \notag \\
    &\qquad 
    + 2 \gamma \int_{t_1}^{t_2} \iint_{\widehat{B}_{n} \times \widehat{B}_{n}^{c}}  (u(x,t) - u(y,t))  \xi (x)K(x,y,t)   \,\dx\,\dy\,\dt \notag \\
    &\leq \gamma \int_{t_1}^{t_2} \iint_{\widehat{B}_{n} \times \widehat{B}_{n}} ( u(x,t) - u(y,t)) ( \xi (x) - \xi (y) ) K(x,y,t) \,\dx\,\dy\,\dt \notag \\
    &\qquad 
    + \gamma \int_{t_1}^{t_2} \iint_{\widehat{B}_{n} \times \widehat{B}_{n}^{c}} \frac{u(x,t)}{\vert x-y \vert^{N+2s}} \xi(x) \,\dx\,\dy\,\dt + \gamma \int_{t_1}^{t_2} \iint_{\widehat{B}_{n} \times \widehat{B}_{n}^{c}} \frac{u_{-}(y,t)}{\vert x-y \vert^{N+2s}} \xi(x) \,\dx\,\dy\,\dt \notag \\
    &=: J_{2} + J_{3} + J_{4}.
\end{align}
We estimate $J_{2}$-$J_{4}$ separately. We analyze first $J_{2}$. Note that $u(\cdot,t) \geq 0$ in $\hat{B}_n$, for any $t \in [0,\tau]$. Additionally, the integrand in $J_{2}$ is positive, if and only if the differences $u(x,t) - u(y,t), \xi(x) - \xi(y)$ have the same sign. Therefore, by \eqref{k}
\begin{align}\label{ineq:prelim_estimate_of_J_3}
    J_{2} \leq \gamma \int_{0}^{\tau} \iint_{A_1} \frac{ u(x,t) - u(y,t)  }{\vert x-y \vert^{N+2s}} ( \xi (x) - \xi (y) )  \,\dx\,\dy\,\dt,
\end{align}
where
\begin{align*}
    A_1 = \{ (x,y) \in \widehat{B}_{n} \times \widehat{B}_{n}: u(x,t) > u(y,t), \xi(x) > \xi(y) \}, \quad \forall t \in [0,\tau].
\end{align*}
Moreover, for all $(x,y) \in A_1$, we have
\begin{align}\label{ineq:estimate_cut_0ff-function_1}
    0 \leq \xi(x) - \xi(y) \leq \gamma\frac{2^{n} \xi^{2}(x)}{\rho} \vert x-y \vert,
\end{align}
with $\gamma$ independent of $n$. Let $\nu := \Bigl( \frac{\tau}{\rho_{n+1}^{2s}} \Bigl)^{\frac{1}{q-1}}$. Combining \eqref{ineq:estimate_cut_0ff-function_1} with \eqref{ineq:prelim_estimate_of_J_3} and applying weighted Hölder's inequality with exponents $\left (2,2 \right )$  yields 
\begin{align}\label{ineq:second_prelim_estimate_of_J_3}
    J_{2} &\leq \gamma\frac{ 2^{n}}{\rho} \int_{0}^{\tau} \iint_{A_1} \frac{ u(x,t) - u(y,t)  }{\vert x-y \vert^{N+2s-1}}  \xi^{2} (x)  \,\dx\,\dy\,\dt \nonumber \\
    &\leq \gamma\frac{ 2^{n}}{\rho} \int_{0}^{\tau} \iint_{A_1} [ u(x,t) - u(y,t)(u(x,t) + \nu)^{- \frac{q+1}{4}} t^{\frac{1}{4}} ] \nonumber \\
    &\phantom{=========}
    \cdot [(u(x,t) + \nu)^{ \frac{q+1}{4}} t^{-\frac{1}{4}} \vert x-y \vert] \frac{\xi^{2}(x)}{\vert x-y \vert^{N+2s}} \,\dx\,\dy\,\dt \nonumber \\
    &\leq \gamma\frac{2^{n}}{\rho} \Bigg[ \int_{0}^{\tau} t^{\frac{1}{2}} \iint_{A_1} \frac{( u(x,t) - u(y,t) )^{2} }{\vert x-y \vert^{N+2s}} [u(x,t) +  \nu ]^{- \frac{q+1}{2}} \xi^{2}(x) \,\dx\,\dy\,\dt \bigg]^{\frac{1}{2}} \nonumber \\
    &\phantom{====}
    \cdot \Bigg[ \int_{0}^{\tau} t^{-\frac{1}{2}} \iint_{A_1} \frac{(u(x,t) + \nu)^{\frac{q+1}{2}}}{\vert x-y \vert^{N+2s-2}} \xi^{2}(x) \,\dx\,\dy\,\dt \Bigg]^{\frac{1}{2}} =: \gamma\frac{ 2^{n}}{\rho} J_{5}^{\frac{1}{2}} J_{6}^{\frac{1}{2}}.
\end{align}
In order to estimate $J_{5}$ further we apply \eqref{tst} with $(\rho, \sigma, \sigma^{\prime})=\left (\rho_{n+1}, \frac{{\rho}_{n}}{\rho_{n+1}}, \frac{ \hat{\rho}_{n}}{\rho_{n+1}} \right )$.  This yields $0<\sigma < \sigma^{\prime} < 1$ and 
\begin{align*}
    \sigma^{\prime} - \sigma \gtrsim \frac{\rho_{n+1} - \rho_{n}}{ \rho_{n+1}} \gtrsim \frac{1}{2^{n}}, \quad 1- \sigma^{\prime} \gtrsim \frac{ \rho_{n+1} -  \rho_{n}}{ \rho_{n+1}} \gtrsim \frac{1}{2^{n}}.
\end{align*}
Therefore, using also the fact that $\rho_{n+1} \sim \rho$, we obtain
\begin{align}\label{ineq:estimate_of_J_6}
    J_{5} &\leq \gamma \rho_{n+1}^{s} \max\{ 2^{2n}, 2^{(N+2s)n} \}   \left[ \sup_{t \in [0,\tau]} \int_{B_{\rho_{n+1}}} u^{q}(x,t) \,\dx + \left( \frac{\tau}{\rho_{n+1}^{\lambda}} \right)^{\frac{q}{q-1}} \right]^{\frac{q+1}{2q}} \left( \frac{\tau}{\rho_{n+1}^{\lambda}} \right)^{\frac{1}{2}} P_{-} \nonumber \\
    &\leq \gamma b^{n} \rho^{s} \left[ S_{n+1} + \left( \frac{\tau}{\rho_{n+1}^{\lambda}} \right)^{\frac{q}{q-1}} \right]^{\frac{q+1}{2q}} \left( \frac{\tau}{\rho_{n+1}^{\lambda}} \right)^{\frac{1}{2}} P_{-},
\end{align}
where $P_{-}$ is defined according to \eqref{def:P_negative_with_L_1_tail}, and $b>1$ depending only on the data. Next, we estimate $J_{6}$ by
\begin{align}\label{ineq:estimate_of_J_7}
    J_{6} &\leq \Bigg[ \int_{0}^{\tau} t^{-\frac{1}{2}} \,\dt \Bigg] \Bigg[ \sup_{t \in [0,\tau]} \int_{B_{n+1}} ( u(x,t) + \nu )^{\frac{q+1}{2}} \,\dx \Bigg] \Bigg[ \sup_{x \in B_{n+1}} \int_{B_{n+1}} \frac{1}{\vert x-y \vert^{N+2s-2}} \,\dy \Bigg] \nonumber\\
    &\leq \gamma \tau^{\frac{1}{2}} \Bigg[ \sup_{t \in [0,\tau]} \int_{B_{n+1}} ( u^{q}(x,t) + \nu^{q} ) \,\dx \Bigg]^{\frac{q+1}{2q}} \vert B_{n+1} \vert^{\frac{q-1}{2q}} \Bigg[ \int_{B_{4 \rho}(0)} \frac{1}{\vert z \vert^{N+2s-2}} \,\dz \Bigg] \nonumber\\
    &\leq \gamma \tau^{\frac{1}{2}} \rho^{\frac{N(q-1)}{2q}+2-2s} \Bigg[ \sup_{t \in [0,\tau]} \int_{B_{n+1}}  u^{q}(x,t) \,\dx + \nu^{q} \rho_{n+1}^{N} \Bigg]^{\frac{q+1}{2q}} \nonumber\\
    &\leq \gamma \rho^{2+s-2s} \Bigg[ S_{n+1} + \Bigl( \frac{\tau}{\rho^{\lambda}} \Bigl)^{\frac{q}{q-1}} \Bigg]^{\frac{q+1}{2q}} \Bigl( \frac{\tau}{\rho^{\lambda}} \Bigl)^{\frac{1}{2}},
\end{align}
where we used H\"older's inequality with exponents $\left (\frac{2q}{q+1}, \frac{2q}{q-1} \right )$.
Inserting the estimates \eqref{ineq:estimate_of_J_6} and \eqref{ineq:estimate_of_J_7} into \eqref{ineq:second_prelim_estimate_of_J_3}, we obtain that there exist $\gamma, b >1$, depending only on the data, such that
\begin{align*}
    J_{2} \leq \gamma b^{n} \Bigg[ S_{n+1} + \Bigl( \frac{\tau}{\rho^{\lambda}} \Bigl)^{\frac{q}{q-1}} \Bigg]^{\frac{q+1}{2q}} \Bigl( \frac{\tau}{\rho^{\lambda}} \Bigl)^{\frac{1}{2}} P_{-}^{\frac{1}{2}}.
\end{align*}
Now, applying the scaled Young's inequality with $(\frac{2q}{q+1}, \frac{2q}{q-1})$ and $\epsilon \in (0,1)$, which will be chosen later, yields
\begin{align}\label{ineq:estimate_of_J_3}
    J_{2} \leq \epsilon \Bigg[ S_{n+1} + \Bigl( \frac{\tau}{\rho^{\lambda}} \Bigl)^{\frac{q}{q-1}} \Bigg] + \gamma_{\epsilon} b^{n} \Bigl( \frac{\tau}{\rho^{\lambda}} \Bigl)^{\frac{q}{q-1}} P_{-}^{\frac{q}{q-1}},
\end{align}
where $\gamma_{\epsilon} \to \infty$ as $\epsilon \to 0$ and $b>1$ depending only on the data. For the estimate of $J_{3}$ in  \eqref{jj1}, we easily observe that
\begin{align*}
    J_{3} \leq \gamma \int_{0}^{\tau} \iint_{\check{B}_{n} \times \widehat{B}_{n}^{c}} \frac{u(x,t)}{\vert x-y \vert^{N+2s}} \,\dx\,\dy\,\dt.
\end{align*}
Now, note that for any $x \in \check{B}_{n}, y \in \widehat{B}_{n}^{c}$ there holds
\begin{align} \label{th}
    \vert x-y \vert \geq \vert y \vert - \vert x \vert \gtrsim \frac{\vert y \vert}{2^{n}}.
\end{align}
Therefore, by  \eqref{th}, Hölder's inequality and scaled Young's inequality with exponents $\left ( q, \frac{q}{q-1}\right )$, we obtain 
\begin{align}\label{ineq:estimate_of_J_4}
    J_{3} &\leq \gamma \tau \Bigg[ \sup_{t \in [0,\tau]} \int_{\check{B}_{n}} u(x,t) \,\dx \Bigg] \Bigg[ \sup_{x \in \check{B}_{n}} \int_{\widehat{B}_{n}^{c}} \frac{1}{\vert x-y \vert^{N+2s}} \,\dy \Bigg] \nonumber \\
    &\leq \gamma 2^{(N+2s)n} \tau \Bigg[ \sup_{t \in [0,\tau]} \int_{B_{n+1}} u(x,t) \,\dx \Bigg] \Bigg[  \int_{B_{\rho}^{c}(0)} \frac{1}{\vert z \vert^{N+2s}} \,\dz \Bigg] \nonumber \\
    &\leq \gamma\frac{b^{n} \tau}{\rho^{2s}} \Bigg[ \sup_{t \in [0,\tau]} \int_{B_{n+1}} u^{q}(x,t) \,\dx \Bigg]^{\frac{1}{q}} \vert B_{n+1} \vert^{\frac{q-1}{q}} \nonumber \\
    &\leq \gamma\frac{ b^{n} \tau}{\rho^{\lambda}} S_{n+1}^{\frac{1}{q}} \leq \epsilon S_{n+1} + \gamma_{\epsilon} b^{n} \Bigl( \frac{\tau}{\rho^{\lambda}} \Bigl)^{\frac{q}{q-1}},
\end{align}
where $\epsilon>0$ will be determined later.
Here, we also have that $\gamma_{\epsilon} \to \infty$ as $\epsilon \to 0$ and $b>1$ depending only on the data. The term $J_{4}$ can be estimated similarly, recalling the definition in \eqref{def:P_negative_with_L_1_tail}, we get
\begin{align}\label{ineq:estimate_of_J_5}
    J_{4} &\leq \gamma \int_{0}^{\tau} \iint_{\check{B}_{n} \times \widehat{B}_{n}^{c}} \frac{u_{-}(y,t)}{\vert x-y \vert^{N+2s}}  \,\dx\,\dy\,\dt \nonumber \\
    &\leq \gamma 2^{(N+2s)n} \vert \check{B}_{n} \vert \int_{0}^{\tau} \int_{\widehat{B}_{n}^{c}} \frac{u_{-}(y,t)}{\vert y \vert^{N+2s}}  \,\dy\,\dt \nonumber\\
    &\leq \gamma b^{n} \rho^{N} \left ( \frac{\tau}{\varrho^{2s}}\right )^{\frac{q}{q-1}} \left ( \frac{\tau}{\varrho^{2s}}\right )^{-\frac{q}{q-1}} \int_{0}^{\tau} \int_{B_{\frac{\rho}{2}}^{c}(0)}  \frac{u_{-}(y,t)}{\vert y  \vert^{N+2s}}  \,\dy\,\dt \nonumber\\
    &\leq \gamma b^{n} \Bigl( \frac{\tau}{\rho^{\lambda}} \Bigl)^{\frac{q}{q-1}} \Bigl( \frac{\tau}{\rho^{2s}} \Bigl)^{-\frac{q}{q-1}} \int_{0}^{\tau} \int_{B_{\frac{\rho}{2}}^{c}(0)}  \frac{u_{-}(y,t)}{\vert y \vert^{N+2s}}  \,\dy\,\dt \nonumber\\
    &\leq \gamma b^{n} \Bigl( \frac{\tau}{\rho^{\lambda}} \Bigl)^{\frac{q}{q-1}} P_{-},
\end{align}
where we used that $\widehat{\rho}_n \sim \rho$ and that $b>1$ depends only on the data. Combining \eqref{ineq:estimate_of_J_3}, \eqref{ineq:estimate_of_J_4} and \eqref{ineq:estimate_of_J_5},  when $t_1\leq t_2$,
\begin{align}\label{forwardJ1}
    J_{1}
    \leq
    \epsilon S_{n+1}
    +
    \gamma_\epsilon b^n
    \Bigl(\frac{\tau}{\rho^\lambda}\Bigr)^{\frac q{q-1}}
    \left(
    P_-+P_-^{\frac q{q-1}}
    \right),
\end{align}
for some $\epsilon\in(0,1)$ to be chosen later, with
$\gamma_\epsilon\to\infty$ as $\epsilon\to0$ and $b>1$ depending
only on the data. Assume now that $t_1>t_2$. 
In this case, we denote by $J_1'$ the diffusion term on the right hand side of
\eqref{ineq:prelim_estimate_of_S_n_+_1}, and observe that 
\begin{align*}
    J_1'
    &=
    -\int_{t_2}^{t_1}
    \iint_{\mathbb R^N\times\mathbb R^N}
    (u(x,t)-u(y,t))
    (\xi(x)-\xi(y))
    K(x,y,t)
    \,\dx\,\dy\,\dt.
\end{align*}
Using again the properties of $\xi$ and the symmetry of the
integrand, we obtain
\begin{align*}
    J_1'
    &\leq
    \gamma\int_{t_2}^{t_1}
    \iint_{\widehat B_n\times\widehat B_n}
    (u(x,t)-u(y,t))
    (\xi(y)-\xi(x))
    K(x,y,t)
    \,\dx\,\dy\,\dt\\
    &\qquad
    +2\gamma\int_{t_2}^{t_1}
    \iint_{\widehat B_n\times\widehat B_n^c}
    (u(y,t)-u(x,t))
    \xi(x)K(x,y,t)
    \,\dx\,\dy\,\dt =: J_2' + J_3'.
\end{align*}
The first integrand is positive only when the differences
$u(x,t)-u(y,t)$ and $\xi(x)-\xi(y)$ have opposite signs. By
symmetry, we may restrict to
$$
A_2:= \{(x,y) \in \widehat B_n\times\widehat B_n: u(x,t)>u(y,t), \xi(x)<\xi(y) \}.
$$
Therefore, arguing as in
\eqref{ineq:second_prelim_estimate_of_J_3}, we obtain
\begin{align*}
J_2' &\leq
\gamma\frac{2^n}{\rho}
\Bigg[
\int_0^\tau t^{\frac12}
\iint_{A_2}
\frac{(u(x,t)-u(y,t))^2}
{|x-y|^{N+2s}}
[u(x,t)+\nu]^{-\frac{q+1}{2}}
\xi^2(y)
\,\dx\,\dy\,\dt
\Bigg]^{\frac12}\\
&\qquad\quad\cdot
\Bigg[
\int_0^\tau t^{-\frac12}
\iint_{A_2}
\frac{(u(x,t)+\nu)^{\frac{q+1}{2}}}
{|x-y|^{N+2s-2}}
\xi^2(y)
\,\dx\,\dy\,\dt
\Bigg]^{\frac12}.
\end{align*}
Now, the first factor is controlled using \eqref{tst} while the second factor is estimated
exactly as $J_6$. Therefore, the interior contribution $J_2'$ satisfies
the same estimate as $J_2$ in \eqref{ineq:estimate_of_J_3}. We next estimate the exterior contribution $J_3'$. Since
$u(x,t)\geq0$ for $x\in\widehat B_n$, we have
\[
u(y,t)-u(x,t)\leq u_+(y,t).
\]
Thus, using \eqref{th} and the fact that
$\operatorname{supp}\xi\subset\check B_n$, we obtain
\begin{align*}
J_3' & \leq 
\gamma\int_0^\tau
\iint_{\check B_n\times\widehat B_n^c}
\frac{u_+(y,t)}
{|x-y|^{N+2s}}
\,\dx\,\dy\,\dt\\
& \leq
\gamma 2^{(N+2s)n}
|\check B_n|
\int_0^\tau
\int_{\widehat B_n^c}
\frac{u_+(y,t)}
{|y|^{N+2s}}
\,\dy\,\dt\\
& \leq
\gamma b^n\rho^N
\textnormal{Tail}
\bigl(u_+;B_\rho(0)\times(0,\tau]\bigr).
\end{align*}
Consequently, when $t_1>t_2$,
\begin{align} \label{k1}
    J_1'
    \leq
    \epsilon S_{n+1}
    +
    \gamma_\epsilon b^n
    \Bigl(\frac{\tau}{\rho^\lambda}\Bigr)^{\frac q{q-1}}
    \left(
    P_-+P_-^{\frac q{q-1}}
    \right)
    +\gamma b^n\rho^N
    \textnormal{Tail}
    \bigl(u_+;B_\rho(0)\times(0,\tau]\bigr),
\end{align}
for some $\epsilon\in(0,1)$ to be chosen later, with
$\gamma_\epsilon\to\infty$ as $\epsilon\to0$ and $b>1$ depending
only on the data. Therefore, from
\eqref{ineq:prelim_estimate_of_S_n_+_1} and \eqref{forwardJ1}, \eqref{k1}, we get the
recursive inequalities
\begin{align}\label{ineq:iteration_inequality_for_S_n_+_1}
    S_n
    &\leq
    I+\epsilon S_{n+1}
    +
    \gamma_\epsilon b^n
    \Bigl(\frac{\tau}{\rho^\lambda}\Bigr)^{\frac q{q-1}}
    \left(
    P_-+P_-^{\frac q{q-1}}
    \right)
    +
    \gamma b^n\rho^N
    \textnormal{Tail}
\bigl(u_+;B_\rho(0)\times(0,\tau]\bigr) \notag  \\ & \leq 
    \epsilon S_{n+1}
    +
    \gamma_\epsilon b^n
    \Bigg[
    I+
    \Bigl(\frac{\tau}{\rho^\lambda}\Bigr)^{\frac q{q-1}}
    \left(
    P_-+P_-^{\frac q{q-1}}
    \right)
    +
    \rho^N
    \textnormal{Tail}
    \bigl(u_+;B_\rho(0)\times(0,\tau]\bigr)
    \Bigg].
\end{align}
By iterating \eqref{ineq:iteration_inequality_for_S_n_+_1}, we
obtain
\begin{align*}
    S_0
    \leq&
    \epsilon^nS_n
    +
    \gamma_\epsilon
    \Bigg[
    I+
    \Bigl(\frac{\tau}{\rho^\lambda}\Bigr)^{\frac q{q-1}}
    \left(
    P_-+P_-^{\frac q{q-1}}
    \right)
    +
    \rho^N
    \textnormal{Tail}
    \bigl(u_+;B_\rho(0)\times(0,\tau]\bigr)
    \Bigg]
    \sum_{i=0}^{n-1}(\epsilon b)^i.
\end{align*}
At this point, we choose
\(
\epsilon\in\left(0,\frac1{2b}\right)
\)
so the series on the right-hand side is bounded by
$2$. Letting $n\to\infty$, we have
\begin{align*}
    S_0
    \leq
    \gamma
    \Bigg[
    I+
    \Bigl(\frac{\tau}{\rho^\lambda}\Bigr)^{\frac q{q-1}}
    \left(
    P_-+P_-^{\frac q{q-1}}
    \right)
    +
    \rho^N
    \textnormal{Tail}
    \bigl(u_+;B_\rho(0)\times(0,\tau]\bigr)
    \Bigg],
\end{align*}
which implies \eqref{prop3.2}.
\end{proof}

\noindent
We can now proceed to prove  Theorem \ref{lq-linf}.

\begin{proof}[Proof of Theorem \ref{lq-linf}]
   Take $(x_0,t_0)=(0,0)$ for simplicity. Using Proposition \ref{lq-lq} and dividing   both sides by $\rho^N$, we have
    \begin{equation*}
        \begin{aligned}
             \sup_{t \in [-\sigma,0]} \fint_{B_{\rho}(0)} u^{q}(x,t) \,\dx \leq \gamma \inf_{t \in [-\sigma,0]} \fint_{B_{2 \rho}(0)} u^{q}(x,t) \,\dx + \frac{\gamma}{\rho^N} \Bigl( \frac{\sigma}{\rho^{\lambda}} \Bigl)^{\frac{q}{q-1}} \big( P_{-} + P_{-}^{\frac{q}{q-1}} \big) + \gamma\operatorname{Tail}\bigl(u_+;B_\rho\times(-\sigma,0]\bigr) \, .
        \end{aligned}
    \end{equation*}
    Then, applying  \eqref{tesi1} with $\texttt{r}=q$ and the estimate above, we get
    \begin{equation*}
        \begin{aligned}
      \esssup_{Q_{  \varrho/2,  \sigma/2}} u \leq & \gamma \Big[ \fiint_{B_{\rho} \times (-\sigma,0)} u^q(x,t) \,\dx\,\dt \Big]^{\frac{2s}{\Lambda_q}}\Big( \frac{\sigma}{\rho^{2s}} \Big)^{-\frac{N}{\Lambda_q}} + \gamma \Big( \frac{\sigma}{\rho^{2s}} \Big)^{\frac{1}{q-1}} + \gamma \textnormal{Tail}^{\frac{1}{q}}\bigl(u_+;B_{\rho/2}\times(-\sigma,0]\bigr) \\  
       \leq & \gamma \Big[ \inf_{t \in [-\sigma,0]} \fint_{B_{2 \rho}} u^{q}(x,t) \,\dx + \frac{\gamma}{\rho^N} \left( \frac{\sigma}{\rho^{\lambda}} \right)^{\frac{q}{q-1}} \big( P_{-} + P_{-}^{\frac{q}{q-1}} \big) + \gamma\operatorname{Tail}\bigl(u_+;B_\rho\times(-\sigma,0]\bigr) \Big]^{\frac{2s}{\Lambda_q}}\Big( \frac{\sigma}{\rho^{2s}} \Big)^{-\frac{N}{\Lambda_q}} \\
      & + \gamma \Big( \frac{\sigma}{\rho^{2s}} \Big)^{\frac{1}{q-1}} + \gamma \textnormal{Tail}^{\frac{1}{q}}\bigl(u_+;B_{\rho/2}\times(-\sigma,0]\bigr) \\
      \leq & \gamma \Big[ \inf_{t \in [-\sigma,0]} \fint_{B_{2 \rho}} u^{q}(x,t) \,\dx \Big]^{\frac{2s}{\Lambda_q}}\Big( \frac{\sigma}{\rho^{2s}} \Big)^{-\frac{N}{\Lambda_q}} + \gamma \Big( \frac{\sigma}{\rho^{2s}} \Big)^{\frac{q}{q-1} (\frac{2s}{\Lambda_q})- \frac{N}{\Lambda_q}} \Big[\big( P_{-} + P_{-}^{\frac{q}{q-1}} \big) \Big]^{\frac{2s}{\Lambda_q}} \\
      & + \gamma \Big( \frac{\sigma}{\rho^{2s}} \Big)^{-\frac{N}{\Lambda_q}}\textnormal{Tail}^{\frac{2s}{\Lambda_q}}\bigl(u_+;B_{\rho}\times(-\sigma,0]\bigr)
      + \gamma \Big( \frac{\sigma}{\rho^{2s}} \Big)^{\frac{1}{q-1}} + \gamma \textnormal{Tail}^{\frac{1}{q}}\bigl(u_+;B_{\rho/2}\times(-\sigma,0]\bigr) \\
      =  & \gamma \Big[ \inf_{t \in [-\sigma,0]} \int_{B_{2 \rho}} u^{q}(x,t) \,\dx \Big]^{\frac{2s}{\Lambda_q}} \sigma^{-\frac{N}{\Lambda_q}} + \gamma \Big( \frac{\sigma}{\rho^{2s}} \Big)^{\frac{1}{q-1}} \Big[\big( P_{-} + P_{-}^{\frac{q}{q-1}} \big) \Big]^{\frac{2s}{\Lambda_q}} \\
      & + \gamma \Big( \frac{\sigma}{\rho^{2s}} \Big)^{-\frac{N}{\Lambda_q}}\textnormal{Tail}^{\frac{2s}{\Lambda_q}}\bigl(u_+;B_{\rho}\times(-\sigma,0]\bigr)
      + \gamma \Big( \frac{\sigma}{\rho^{2s}} \Big)^{\frac{1}{q-1}} + \gamma \textnormal{Tail}^{\frac{1}{q}}\bigl(u_+;B_{\rho/2}\times(-\sigma,0]\bigr)
        \end{aligned},
    \end{equation*}
    that, up to redefining radii, is \eqref{intH}. The proof is complete.
\end{proof}
\section{Expansion of positivity} \label{sec4}
\noindent
In this section we prove Theorem \ref{prop:expansion_positivity}. 
Compared with \cite{MY}, the argument below yields the expansion of positivity under an $L^1$ formulation of the nonlocal tail. The proof is based on a sequence of auxiliary lemmas collected in the section below.
\subsection{Preliminary tools}
In the following, let \(Q:=B_{2\varrho}(x_0)\times(t_1,t_2]\Subset\Omega_T\).
It is useful to record the origin of the constant appearing in the tail alternative.
\begin{remark} \label{remark1}
The constant
\begin{equation} \label{THETA}
    \Theta:=2^{N+2s+1}\textnormal{L}
\end{equation}
arises when the Caccioppoli estimate is applied with the test function constructed using the cut-off chosen below. More precisely, the corresponding exterior contribution will contain the factor
\[
2\textnormal{\texttt{L}}
\left(\frac{R}{R-\tilde\rho}\right)^{N+2s}.
\]
With \(R=2\rho\) and \(\tilde\rho=\rho\), this factor is equal to \(\Theta\). The same constant is used in the definition of \(\ell\), so that the tail of \(u_-\) is absorbed by the term generated by the time derivative of the level \(k(t)\).
\end{remark}
\noindent
The first lemma is a De Giorgi type result.
\begin{lemma}[A De Giorgi type lemma]\label{lem1}
Let \(u\) be a locally bounded, nonnegative, local weak super-solution to~\eqref{eq:PDE} in \(\Omega_T\). For some \(\delta\in(0,1)\) and \(k>0\), set \(\theta=\delta k^{q-1}\) and assume \(Q_{\varrho}(\theta):= B_\rho(x_0)\times (t_0-\theta\rho^{2s},t_0)\subset Q\). There exists a constant \(\nu\in(0,1)\), depending only on the data and \(\delta\), such that if
\[
\bigl|\{u\leq k\}\cap Q_{\varrho}(\theta)\bigr|\leq \nu |Q_{\varrho}(\theta)|,
\]
then either
\[\left (
\Theta\textnormal{Tail}\bigl(u_{-};Q\bigr)\right)^{\frac1q}> \frac{k}{4},
\]
or
\[
u\geq \frac{k}{4}\qquad\text{a.e. in }Q_{\frac\varrho 2}(\theta).
\]
Moreover, we have the dependence \(\nu\approx\delta^\beta\), for some \(\beta>1\) depending only on \(N,s\) and \(q\).
\end{lemma}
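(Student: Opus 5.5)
We will run a standard De Giorgi iteration on shrinking cylinders with decreasing levels, using the Caccioppoli estimate for $(u-k)_-$ alluded to in Remark~\ref{remark1}. Assume $(x_0,t_0)=(0,0)$ and suppose that the tail alternative fails, i.e.\ $\Theta\,\mathrm{Tail}(u_-;Q)\le (k/4)^q$. For $n\in\mathbb N_0$ set
\[
\varrho_n=\tfrac{\varrho}{2}+\tfrac{\varrho}{2^{n+1}},\qquad \tilde\varrho_n=\tfrac{\varrho_n+\varrho_{n+1}}{2},\qquad k_n=\tfrac k4+\tfrac{k}{2^{n+1}}\cdot\tfrac32 ,
\]
so that $k_0=k$ and $k_n\downarrow k/4$. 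We also set $B_n=B_{\varrho_n}$, $Q_n=B_n\times(-\theta\varrho_n^{2s},0]$, and similarly $\tilde B_n$, $\tilde Q_n$. We take cut-offs $\zeta_n$ that equal $1$ on $\tilde Q_n$, vanish near the parabolic boundary of $Q_n$, and satisfy $|\nabla\zeta_n|\lesssim 2^n/\varrho$ and $|\partial_t\zeta_n|\lesssim 2^{2sn}/(\theta\varrho^{2s})$. Testing the supersolution inequality with $-(u-k_n)_-\zeta_n^2$, which is admissible by the mollification of Section~\ref{sec5}, and using Lemma~\ref{2.3} to control $\mathfrak g_-(u,k_n)$ from below and above, gives the energy estimate
\[
\esssup_t\int_{\tilde B_n}(u+k_n)^{q-1}(u-k_n)_-^2\zeta_n^2 \;+\;\int\!\!\iint \frac{|(u-k_n)_-\zeta_n(x)-(u-k_n)_-\zeta_n(y)|^2}{|x-y|^{N+2s}}
\;\lesssim\; \text{RHS}_n .
\]
Here $\text{RHS}_n$ collects three kinds of terms: the time-derivative term $\iint \mathfrak g_-(u,k_n)|\partial_t\zeta_n^2|$, the local cut-off terms $\iint (u-k_n)_-^2|\nabla\zeta_n|^2$ type quantities, and the exterior term $\Theta 2^{n(N+2s)}\varrho^{-2s}\iint (u-k_n)_-\zeta_n\cdot \sup_x\int_{B_n^c}\frac{(u-k_n)_-(y)}{|x-y|^{N+2s}}$.

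Next we bound each term by the level $k$ and the measure $|A_n|$, where $A_n=\{u<k_n\}\cap Q_n$. Since $u\ge0$, we have $(u-k_n)_-\le k$, and $(u+k_n)^{q-1}\le (2k)^{q-1}$ on $\{u<k_n\}$. Hence the time term is at most $\gamma\, 2^{2sn}k^{q+1}/(\theta\varrho^{2s})\,|A_n| = \gamma\, 2^{2sn}\delta^{-1}k^2\varrho^{-2s}|A_n|$, and the local term is at most $\gamma\, 2^{2n}k^2\varrho^{-2s}|A_n|$. For the exterior term we split $(u-k_n)_-(y)\le k_n+u_-(y)$. The $k_n$ part contributes $\gamma\, 2^{n(N+2s)}k^2\varrho^{-2s}|A_n|$. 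The $u_-$ part is integrated in time against the $L^1$ tail; since $(u-k_n)_-\le k$ there, its contribution is bounded by $k\,|A_n|\,\varrho^{-N}\cdot\Theta\,\mathrm{Tail}(u_-;Q)$ after pulling out the sup over time slices. This pulling out is where the $L^1$-in-time tail creates a difficulty: one cannot multiply pointwise in time. We plan to handle it by bounding $\sup_t|A_n(t)|\le|B_n|$ and keeping $|A_n|$ only through the energy side, or alternatively by absorbing it into a time-dependent level as Remark~\ref{remark1} suggests.

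On the lower side, the left-hand side dominates $k^{q-1}\esssup_t\int (u-k_n)_-^2\zeta_n^2$ because $u+k_n\ge k/4$. We then apply the fractional parabolic embedding (the $p=2$ version of Lemma~\ref{em} with $L^2$ in place of $L^{q+1}$, giving the exponent $2(N+2s)/N$) to $(u-k_n)_-\zeta_n$. Using $(u-k_n)_-\ge k/2^{n+3}$ on $A_{n+1}$ and Hölder's inequality, this yields
\[
Y_{n+1}\le \gamma\, b^n \delta^{-\beta'}\, Y_n^{1+\frac{2s}{N+2s}},\qquad Y_n=\frac{|A_n|}{|Q_n|}.
\]
The intrinsic scaling $\theta=\delta k^{q-1}$ is what makes the powers of $k$ cancel, with the mismatch between the time-slice weight $k^{q-1}$ and the time length $\delta k^{q-1}\varrho^{2s}$ producing a power of $\delta$. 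Lemma~\ref{fg} then gives $Y_n\to0$ provided $Y_0\le\nu:=\gamma^{-1}\delta^{\beta}$, which explains the claimed dependence $\nu\approx\delta^\beta$. Consequently $u\ge k/4$ a.e.\ in $Q_{\varrho/2}(\theta)$.

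The main obstacle is the treatment of the $u_-$ tail in $L^1$ form. Since $\int\!\int (u-k_n)_-\,\mathrm{Tail}(t)\,\mathrm dt$ does not factor into $|A_n|$ times the full tail, the $L^1$ tail can only be paired with $\esssup_t\int(u-k_n)_-$, and that is not small in $|A_n|$. Our first attempt is to choose time-dependent levels $k_n(t)=k_n-\ell\int_{-\theta\varrho^{2s}}^t\Theta\,\mathrm{Tail}(u_-)(\tau)\,\varrho^{-2s}\,\mathrm d\tau$, with $\ell\sim k^{1-q}$, following Remark~\ref{remark1}. The extra term $-\partial_t k_n(t)\int(u-k_n)_-(u+k_n)^{q-1}$ coming from the time derivative then has a favourable sign and cancels the tail contribution pointwise in $t$. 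Under the failure of the tail alternative, the total drift of $k_n(t)$ is at most $k/8$, so the levels stay between $k/4$ and $k$ and the iteration closes as above.
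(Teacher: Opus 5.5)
Your strategy matches the paper's. You use a level $k_n(t)=k_n-D(t)$ whose time derivative produces a negative term that cancels, slice by slice in time, the contribution of the $L^1$-tail of $u_-$, and then run a De Giorgi iteration closed by Lemma~\ref{fg}. Your first idea, bounding $\sup_t|A_n(t)|$ by $|B_n|$, would not work: the resulting term is not proportional to $|A_n|$, so it cannot enter the recursion. The time-dependent level is the essential point. Your use of the $L^2$-type parabolic embedding through $\mathfrak g_-(u,k_n(t))\gtrsim k^{q-1}(u-k_n(t))_-^2$, instead of Lemma~\ref{em} through $\mathfrak g_-\ge w_-^{q+1}/(q+1)$, is a harmless variant. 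It gives $Y_{n+1}\le\gamma b^n\delta^{-N/(N+2s)}Y_n^{1+2s/(N+2s)}$, hence the same $\nu\approx\delta^{N/(2s)}$ as the paper.

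The step that fails is the bookkeeping of the levels. With $k_n\downarrow k/4$ and a drift $D(t)>0$, the levels $k_n(t)$ do not stay in $[k/4,k]$. Their limit is $k/4-D(t)$, so $Y_n\to0$ only yields $u\ge k/4-D(t)$, which is weaker than the claimed $u\ge k/4$. Nor can the drift be kept below $k/8$. The bound $(k_n(t)^q-u^q)_+\ge k_n(t)^{q-1}(u-k_n(t))_-$ is sharp at $u=0$, so the cancellation forces $D'(t)\ge k_n(t)^{1-q}\,\Theta\int_{\R^N\setminus B_{2\varrho}}u_-(y,t)|y|^{-N-2s}\,\dy$ for every $n$. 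Since $D$ does not depend on $n$ in your scheme, $D$ must be at least $(k/4)^{1-q}\,\Theta\,\mathrm{Tail}(u_-;Q)$, which can be as large as $k/4$; the late levels then degenerate to $0$.

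The remedy is the paper's normalization: let the constant part decrease only to $k/2$, e.g. $k_n=k/2+k/2^{n+1}$, and allow a drift up to $k/4$. With your linear drift this means $\lambda=(4/k)^{q-1}$, that is
\[
D(t)=\Bigl(\frac4k\Bigr)^{q-1}\Theta\int_{-S}^{t}\int_{\R^N\setminus B_{2\varrho}}\frac{u_-(y,\tau)}{|y|^{N+2s}}\,\dy\,\d\tau\le\Bigl(\frac4k\Bigr)^{q-1}\Bigl(\frac k4\Bigr)^{q}=\frac k4 .
\]
Then $k_n(t)\ge k/4$, so $D'(t)\,k_n(t)^{q-1}$ dominates $\Theta\int_{\R^N\setminus B_{2\varrho}}u_-(y,t)|y|^{-N-2s}\,\dy$ and the cancellation holds. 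Your lower bound $\mathfrak g_-\gtrsim k^{q-1}w_-^2$ also survives. The limit gives $u\ge k/2-D(t)\ge k/4$ a.e.\ in $Q_{\varrho/2}(\theta)$.
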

\begin{proof}
 Assume $(x_0, t_0)=(0,0)$. As a restatement, we show that there exists \(\nu\in(0,1)\), depending only on the data and \(\delta\), such that if
\begin{equation} \label{fta}
    \left (
\Theta 
\textnormal{Tail}\bigl(u_-;Q\bigr)\right)^{\frac1q}\leq\frac{k}{4},
\end{equation}
and
\begin{equation} \label{t2}
    \bigl|\{u\leq k\}\cap Q_{\varrho}(\theta)\bigr|\leq \nu |Q_{\varrho}(\theta)|,
\end{equation}
then
\[
u\geq \frac{k}{4}\qquad\text{almost everywhere in }Q_{\frac\varrho2}(\theta).
\]
We may assume \((x_0,t_0)=(0,0)\). We define, for \(t\in(-S,0)\),
\begin{equation} \label{elll}
    \ell(t):=
\left[
\Theta 
\int_{-S}^{t}\int_{\R^N\setminus B_{2\varrho}}
\frac{(u(y,\tau))_-}{|y|^{N+2s}}\,\dy\,\d\tau
\right]^{\frac1q},
\end{equation}
that is \cite[Definition (2.5)]{FDF} with $R=2\rho$, $\tilde{\rho}=\rho$. Then \(\ell\) is nondecreasing and, by \eqref{fta},
\begin{equation} \label{el1}
    0\leq \ell(t)\leq \ell(0)\leq \frac{k}{4}
\qquad\text{for every }t\in(-S,0).
\end{equation}
\noindent
For \(n\in\N_0\), set
\[
\varrho_n:=\frac{\varrho}{2}+\frac{\varrho}{2^{n+2}},
\quad
\sigma_n:=\theta\varrho_n^{2s},
\quad
B_n:=B_{\varrho_n},
\quad
Q_n:=B_n\times(-\sigma_n,0),
\quad
k_n:=\frac{k}{2}+\frac{k}{2^{n+1}},
\quad
k_n(t):=k_n-\ell(t),
\]
\[
\tilde\varrho_n:=\frac{\varrho_n+\varrho_{n+1}}{2},
\quad
\tilde\sigma_n:=\theta\tilde\varrho_n^{2s},
\quad
\tilde B_n:=B_{\tilde\varrho_n},
\quad
\tilde Q_n:=\tilde B_n\times(-\tilde\sigma_n,0),
\]
and
\[
\hat\varrho_n:=\frac{\varrho_n+3\varrho_{n+1}}{4},
\quad
\hat\sigma_n:=\frac{\sigma_n+3\sigma_{n+1}}{4},
\quad
\hat B_n:=B_{\hat\varrho_n},
\quad
\hat Q_n:=\hat B_n\times(-\hat\sigma_n,0).
\]
Then
\[
Q_{n+1}\subsetneq \hat Q_n\subsetneq \tilde Q_n\subsetneq Q_n \subsetneq Q_\rho .
\]
We introduce the truncation
\[
w_-(x,t):=\bigl(u(x,t)-k_n(t)\bigr)_-.
\]
By \eqref{el1}, we have
\begin{equation}\label{w-}
k_n(t)\geq \frac{k}{2}-\frac{k}{4}=\frac{k}{4},
\qquad
k_n(t)\leq k_n\leq k,
\qquad
0\leq w_-\leq k.
\end{equation}
Let
\[
A_n:=\{(x,t)\in Q_n:u(x,t)<k_n(t)\}.
\]
We observe that the Caccioppoli estimate in \cite[Proposition 2.1]{FDF} can be used for supersolutions by replacing, in the test function, $(u-k(t))_{+}$ with $(u-k(t))_-$, where $k(t)=k-\ell(t)$. With this choice, the term involving $k'(t)$ is arranged to exactly compensate the nonlocal contribution arising from the tail. Observing that, by \eqref{el1} it holds $k_n(t)>0$,
we apply the Caccioppoli estimate \cite[Proposition 2.6]{FDF} for supersolution and with the choice
\[
k(t):=k_n(t),
\quad
\theta:=S,
\quad
\sigma_1:=\tilde\sigma_n,
\quad
\sigma_2:=\sigma_n,
\quad
r=\tilde\varrho_n,
\quad
\rho=\varrho_n,
\quad
\tilde\varrho=\varrho,
\quad
R=2\varrho.
\]
Notice that
\[
\sigma_2-\sigma_1
=
\theta(\varrho_n^{2s}-\tilde\varrho_n^{2s})
\geq c\,\theta\,2^{-n}\varrho^{2s},
\qquad
\rho-r
=
\varrho_n-\tilde\varrho_n
=
\frac{\varrho}{2^{n+4}}.
\]
Therefore, the Caccioppoli estimate gives
\begin{align}\label{ineq:caccioppoli_dg_tail_super}
&\esssup_{-\tilde\sigma_n<t<0}
\int_{\tilde B_n}\mathfrak{g}_-\bigl(u(x,t),k_n(t)\bigr)\,\dx
+\int_{-\tilde\sigma_n}^{0}
\iint_{\tilde B_n\times \tilde B_n}
\frac{|w_-(x,t)-w_-(y,t)|^2}{|x-y|^{N+2s}}\,\dx\,\dy\,\dt
\notag\\
&\leq c\,\frac{2^n}{\theta\varrho^{2s}}
\iint_{Q_n}\mathfrak{g}_-\bigl(u(x,t),k_n(t)\bigr)\,\dx\,\dt
+c\,\frac{2^{2n}}{\varrho^{2s}}\iint_{Q_n}w_-^2(x,t)\,\dx\,\dt
\notag\\
&\quad
+c\,2^{n(N+2s)}\frac{1}{\varrho^{2s}}
\left[
\esssup_{-S<t<0}
\fint_{B_{2\varrho}}w_-(y,t)\,\dy
\right]
\iint_{Q_n}w_-(x,t)\,\dx\,\dt
=: I_1+I_2+I_3.
\end{align}
We estimate the three terms on the right-hand side of~\eqref{ineq:caccioppoli_dg_tail_super}. On \(A_n\), by \eqref{GG}, we have
\begin{equation} \label{suG}
    \frac{1}{q+1} w_{-}^{q+1}(x,t) \leq \mathfrak{g}_-\bigl(u(x,t),k_n(t)\bigr) \leq \frac{q}{2} k^{q+1}(x,t) 
\end{equation}
Therefore, using \eqref{suG}, \eqref{w-} and \(\theta=\delta k^{q-1}\),
\begin{equation}\label{I_1}
I_1 = c\,\frac{2^n}{\theta\varrho^{2s}}
\iint_{A_n}\mathfrak{g}_-\bigl(u(x,t),k_n(t)\bigr)\,\dx\,\dt
\leq
c\,2^n\frac{k^2}{\delta\varrho^{2s}}|A_n|.
\end{equation}
For the second term, using \eqref{w-}, we get
\begin{equation}\label{I_2}
I_2
\leq
c\,2^{2n}\frac{k^2}{\varrho^{2s}}|A_n|.
\end{equation}
For the third term, using again \eqref{w-}, we get
\[
\esssup_{-S<t<0}\fint_{B_{2\varrho}}w_-(y,t)\,\dy\leq k,
\qquad
\iint_{Q_n}w_-(x,t)\,\dx\,\dt\leq k|A_n|,
\]
therefore
\begin{equation}\label{I_3}
I_3
\leq
c\,2^{n(N+2s)}\frac{k^2}{\varrho^{2s}}|A_n|.
\end{equation}
Combining \eqref{I_1}-\eqref{I_3} and \eqref{ineq:caccioppoli_dg_tail_super}, using \(\delta\in(0,1)\), we obtain
\begin{align}\label{ineq:energy_estimate_dg_iteration_super}
&\esssup_{-\tilde\sigma_n<t<0}
\int_{\tilde B_n}\mathfrak{g}_-\bigl(u(x,t),k_n(t)\bigr)\,\dx
+\int_{-\tilde\sigma_n}^{0}
\iint_{\tilde B_n\times \tilde B_n}
\frac{|w_-(x,t)-w_-(y,t)|^2}{|x-y|^{N+2s}}\,\dx\,\dy\,\dt
 \leq
c\,b^n\frac{k^2}{\delta\varrho^{2s}}|A_n|,
\end{align}
for some \(b>1\) depending only on \(N\) and \(s\).

Consider now a cut-off function \(0\leq \phi\leq1\) on \(\tilde Q_n\),
which vanishes outside \(\hat Q_n\), satisfies \(\phi=1\) in \(Q_{n+1}\), and
\(
|D\phi|\leq {2^{n+4}}/{\varrho}.
\)
By the parabolic Sobolev embedding \ref{em}, with \(d=2^{-n-6}\), we get
\begin{align} \label{4.9}
&
\iint_{\tilde Q_n}
\bigl[\phi(x,t)w_-(x,t)\bigr]^\mm\,\dx\,\dt
\leq
c\biggl[
\varrho^{2s}
\int_{-\tilde\sigma_n}^{0}
\iint_{\tilde B_n\times\tilde B_n}
\frac{
\left|
\phi(x,t)w_-(x,t)
-
\phi(y,t)w_-(y,t)
\right|^2
}{|x-y|^{N+2s}}
\,\dx\,\dy\,\dt
\notag\\
&\quad
+2^{n(N+2s)}
\iint_{\tilde Q_n}
\bigl[\phi(x,t)w_-(x,t)\bigr]^2\,\dx\,\dt
\biggr]
\biggl[
\esssup_{-\tilde\sigma_n<t<0}
\fint_{\tilde B_n}
\bigl[\phi(x,t)w_-(x,t)\bigr]^{q+1}\,\dx
\biggr]^{\frac{2s}{N}}=:c\bigl(R_1+R_2\bigr)R_3^{\frac{2s}{N}}.
\end{align}
We now estimate the three terms in the right hand side above. 
Using
\[
\begin{aligned}
&\left|
\phi(x,t)w_-(x,t)
-
\phi(y,t)w_-(y,t)
\right|^2
\\
& \qquad\leq
2\left|
w_-(x,t)
-
w_-(y,t)
\right|^2\phi^2(x,t)
+2w_-^2(y,t)
|\phi(x,t)-\phi(y,t)|^2,
\end{aligned}
\]
and \eqref{w-}, \eqref{ineq:energy_estimate_dg_iteration_super}, we get
\begin{align}
R_1 + R_2
&\leq
c\varrho^{2s}
\int_{-\tilde\sigma_n}^{0}
\iint_{\tilde B_n\times\tilde B_n}
\frac{|w_-(x,t)-w_-(y,t)|^2}{|x-y|^{N+2s}}
\,\dx\,\dy\,\dt
+c\,2^{2n}
\iint_{\tilde Q_n}w_-^2(x,t)\,\dx\,\dt \notag \\ & \qquad + 2^{n(N+2s)}
\iint_{\tilde Q_n\cap A_n}
w_-^2(x,t)\,\dx\,\dt \leq 
c\,b^n\frac{k^2}{\delta}|A_n|  +  2^{2n} k^2 |A_n|
\leq
c\,b^n\frac{k^2}{\delta}|A_n|,
\label{eq:R1_estimate}
\end{align}
with $b$ depending on $N,s$. As regards \(R_3\), by \eqref{ineq:energy_estimate_dg_iteration_super}, \eqref{suG}, \eqref{w-}, it holds
\begin{align}
R_3
\leq
c
\esssup_{-\tilde\sigma_n<t<0}
\fint_{\tilde B_n}
\mathfrak g_-\bigl(u(x,t),k_n(t)\bigr)
\,\dx
\leq
c\,b^n
\frac{k^2}{\delta\varrho^{N+2s}}
|A_n|.
\label{eq:R3_estimate}
\end{align}
Combining \eqref{eq:R1_estimate}, \eqref{eq:R3_estimate} with \eqref{4.9}, we obtain
\begin{align}\label{eq:Lm_estimate}
\iint_{\tilde Q_n}
\bigl[\phi(x,t)w_-(x,t)\bigr]^\mm\,\dx\,\dt \leq 
c\,b^n\frac{k^2}{\delta}|A_n|
\left(
b^n\frac{k^2}{\delta\varrho^{N+2s}}|A_n|
\right)^{\frac{2s}{N}}.
\end{align}
Since \(\phi=1\) in \(Q_{n+1}\), we have 
\begin{equation} \label{a1}
    \iint_{A_{n+1}}w_-\,\dx\,\dt \leq \iint_{\tilde Q_n} \bigl[\phi(x,t)w_-(x,t)\bigr]\,\dx\,\dt.
\end{equation}
Moreover, on \(A_{n+1}\) we have 
\begin{equation} \label{a2}
    w_-(x,t) = k_n(t)-u(x,t) \geq k_n(t)-k_{n+1}(t) = k_n-k_{n+1} = \frac{k}{2^{n+2}}.
\end{equation}
Therefore, by H\"older inequality with exponents $\left (\mm,\frac{\mm}{\mm-1} \right )$, \eqref{a2}, \eqref{a1} and \eqref{eq:Lm_estimate}, we get
\begin{align} \label{pe} \frac{k}{2^{n+2}} |A_{n+1}| &\leq \iint_{A_{n+1}}w_-\,\dx\,\dt \leq \left (\iint_{\tilde{Q}_n}(w_-\phi)^\mm\,\dx\,\dt \right)^{\frac{1}{\mm}}|A_{n}|^{\frac{\mm-1}{\mm}} \leq c\,b^n\frac{k^{\left(2+\frac{4s}{N}\right)\frac{1}{\mm}}|A_n|^{1+\frac{2s}{N\mm} }}{\delta^{\frac{N+2s}{N\mm}}\varrho^{(N+2s)\frac{2s}{N\mm}}} . \end{align} 
Define now
\[
Y_n:=\frac{|A_n|}{|Q_n|}.
\]
From \eqref{pe} and the definition of $\mm$ in \eqref{m},
we obtain
\[
|A_{n+1}|
\leq
c\,b^n
\frac{1}
{\delta^{\frac{N+2s}{N\mm}}k^{(q-1)\frac{2s}{N\mm}}\varrho^{(N+2s)\frac{2s}{N\mm}}}
|A_n|^{1+\frac{2s}{N\mm}}.
\]
Dividing by \(|Q_{n+1}|\) and using \(\rho_n \sim \rho\), we infer
\[
Y_{n+1}
\leq
c\,b^n\delta^{-\frac{1}{\mm}}Y_n^{1+\frac{2s}{N\mm}}.
\]
By the fast geometric convergence lemma \ref{fg}, there exists \(\nu_0\in(0,1)\), with \(\nu_0\approx\delta^{\frac{N}{2s}}\), such that if \(Y_0\leq\nu_0\), then \(Y_n\to0\).
Now, since \(Q_0=Q_{3\varrho/4}(\theta)\) and \(k_0(t)=k-\ell(t)\leq k\), we have
\[
A_0\subset \{u< k\}\cap Q_{\varrho}(\theta)
\]
and, by \eqref{t2}, it holds
\[
Y_0\leq \left (\frac{4}{3}\right )^{N+2s}\nu,
\]
therefore, choosing $\nu \leq \left (\frac{3}{4}\right )^{N+2s}\nu_0$,
we obtain \(Y_n\to0\). Since \(\varrho_n\to\varrho/2\) and \(k_n(t)\to k/2-\ell(t)\), we get
\[
\left|
\left\{(x,t)\in Q_{\frac{\varrho}{2}}(\theta):u(x,t)<\frac{k}{2}-\ell(t)\right\}
\right|=0.
\]
Thus
\[
u\geq \frac{k}{2}-\ell(t)
\qquad\text{almost everywhere in }Q_{\frac{\varrho}{2}}(\theta).
\]
By \eqref{el1}, we obtain
\[
u\geq \frac{k}{4}
\qquad\text{almost everywhere in }Q_{\frac{\varrho}{2}}(\theta).
\]
This ends the proof.
\end{proof}

\noindent   
The next lemma relies on a variant of the Caccioppoli estimate in which the first term on the right-hand side is replaced by the initial energy term, following \cite[Lemma~4.3]{MY}. This version is obtained by repeating the proof of the Caccioppoli inequality in \cite[Proposition~2.1]{FDF}, replacing the standard time cut-off with the one introduced in \cite[p.~2614]{MY}. As a consequence, the contribution involving the time derivative of the cut-off function is replaced by the initial energy term, while all the remaining spatial and nonlocal terms are unchanged. Although the estimate is proved in \cite{MY} for backward cylinders, the same argument applies verbatim to forward cylinder $Q_{R,S}:= B_R(x_0) \times (t_0,t_0+S)$. We record below the corresponding Caccioppoli estimate for supersolutions, valid under the assumption that $k(t):=k-\ell(t)>0$. Setting $w_{-}(x,t):=(u(x,t)-k(t))_{-}$, it holds
\begin{align} \label{fc}
&\operatorname*{ess\,sup}_{t_0<t<t_0+S}
\int_{B_r(x_0)} \mathfrak{g}_-(u(x,t),k(t))\,\dx 
+\int_{t_0}^{t_0+S}\iint_{B_r(x_0)\times B_r(x_0)}
\frac{|w_-(x,t)-w_-(y,t)|^2}{|x-y|^{N+2s}}\,\dx\,\dy\,\dt
\notag\\ & \qquad \qquad + \int_{t_0}^{t_0+S}
\int_{B_r(x_0)}
w_-(x,t)
\left(
\int_{B_r(x_0)}
\frac{\bigl(u(y,t)-k(t)\bigr)_+}{|x-y|^{N+2s}}\,\dy 
\right)dxdt \notag \\ 
& \qquad \leq
\int_{B_\varrho(x_0)} \mathfrak{g}_-(u(x,t_0),k(t_0))\,\dx 
+c\frac{\varrho^{2(1-s)}}{(\varrho-r)^2}
\int_{t_0}^{t_0+S}\int_{B_\varrho(x_0)} w_-^2(x,t)\,\dx\,\dt
\notag\\
& \qquad \quad
+cR^N\left(\frac{1}{\varrho-r}\right)^{N+2s}
\operatorname*{ess\,sup}_{t_0<t<t_0+S}
\fint_{B_R(x_0)} w_-(y,t)\,\dy 
\int_{t_0}^{t_0+S}\int_{B_\varrho(x_0)} w_-(x,t)\,\dx\,\dt.
\end{align}
The next lemma shows that measure-theoretic information propagates  forward in time.

\begin{lemma}\label{lem3}
Let $q>1$, and let $u$ be a locally bounded, nonnegative, local weak
super-solution to \eqref{eq:PDE} in $\Omega_T$. Let $\alpha\in(0,1)$.
Then there exist $\delta,\varepsilon, \eta_0\in(0,1)$, depending only on the data
and on $\alpha$, such that, if
\[
\left|
\left\{
u(\cdot,t_0)\geq k
\right\}
\cap B_\varrho(x_0)
\right|
\ge
\alpha |B_\varrho|,
\]
for some $k>0$, then either
\[
\left(
\Theta \,
\textnormal{Tail}\bigl(u_{-};Q\bigr)
\right)^{\frac1q}
>
\eta_0k,
\]
or
\[
\left|
\left\{
u(\cdot,t)\geq \varepsilon k
\right\}
\cap B_\varrho(x_0)
\right|
\geq
\frac{\alpha}{2}|B_\varrho|, \qquad
\text{for every }
t\in
\left(
t_0,t_0+\delta k^{q-1}\varrho^{2s}
\right],
\]
provided the cylinders involved are included in $Q$. Moreover, $\delta \approx \alpha^{N+2s+1}$ and $\varepsilon, \eta_0 \approx \alpha$.
\end{lemma}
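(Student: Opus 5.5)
We plan to use the forward Caccioppoli estimate \eqref{fc} on the forward cylinder $B_\varrho(x_0)\times(t_0,t_0+\delta k^{q-1}\varrho^{2s}]$. The level will be the time-dependent $k(t):=k-\ell(t)$, with $\ell$ defined as in \eqref{elll} (tail integral started at $t_0$, $R=2\varrho$). We only take the energy bound for $t$ in the whole interval, on the smaller ball $B_r$ with $r=(1-\sigma)\varrho$, where $\sigma\in(0,1)$ is fixed later in terms of $\alpha$. We first assume the tail alternative fails with $\eta_0$ small, say $\ell(t)\le \eta_0 k\le k/2$. Then $k/2\le k(t)\le k$ and $0\le w_-\le k$, so all the estimates used in Lemma~\ref{lem1} are available.

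\textbf{Step 1: bound the terms of \eqref{fc}.} Set $S=\delta k^{q-1}\varrho^{2s}$.
\begin{itemize}
\item The initial term satisfies $\int_{B_\varrho}\mathfrak g_-(u(x,t_0),k(t_0))\,\dx\le c\,k^{q+1}(1-\alpha)|B_\varrho|$. Here we use Lemma~\ref{2.3}, the fact that $w_-(\cdot,t_0)=0$ on $\{u(\cdot,t_0)\ge k\}$, and $w_-\le k$. The constant $c$ depends only on $q$; we need to check this can be made sharp enough, see Step 3.
\item The $w_-^2$ term is at most $c\,\sigma^{-2}k^2 S|B_\varrho|\varrho^{-2s}=c\,\sigma^{-2}\delta k^{q+1}|B_\varrho|$.
\item The tail term is at most $c\,\sigma^{-(N+2s)}\delta k^{q+1}|B_\varrho|$.
\end{itemize}
All three bounds have the common homogeneity $k^{q+1}$, precisely because of the intrinsic time scale $k^{q-1}\varrho^{2s}$.

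\textbf{Step 2: lower bound on the bad set.} For each $t$, on the set $\{u(\cdot,t)<\varepsilon k\}\cap B_r$ we have $u\le \varepsilon k\le k(t)/2$ once $\varepsilon\le 1/4$. By Lemma~\ref{2.3}, this gives $\mathfrak g_-(u,k(t))\ge c^{-1}k(t)^{q-1}(k(t)-\varepsilon k)^2\ge \tilde c^{-1}(1-\varepsilon')k^{q+1}$, up to a factor reflecting $k(t)\ge(1-\eta_0)k$. Combining with Step 1 yields
\[
|\{u(\cdot,t)<\varepsilon k\}\cap B_r|\le \frac{\text{(initial)}+c\,\delta(\sigma^{-2}+\sigma^{-N-2s})k^{q+1}|B_\varrho|}{\text{(lower density)}}.
\]
Adding $|B_\varrho\setminus B_r|\le N\sigma|B_\varrho|$, we then choose $\sigma\approx\alpha$, then $\delta\approx\alpha\,\sigma^{N+2s}\approx\alpha^{N+2s+1}$, and $\varepsilon,\eta_0\approx\alpha$. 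The goal is total measure at most $(1-\alpha/2)|B_\varrho|$.

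\textbf{Step 3: the main obstacle.} The constants in Lemma~\ref{2.3} are not $1$, so comparing the initial energy $\le c(1-\alpha)k^{q+1}|B_\varrho|$ with the lower bound $c^{-1}k^{q+1}$ loses a factor $c^2$. That would destroy the fraction $1-\alpha$. To avoid this, I would use the explicit form \eqref{GG} rather than Lemma~\ref{2.3}. At $t_0$ the bound is $\mathfrak g_-(u,k(t_0))\le \mathfrak g_-(0,k(t_0))=G(k(t_0))$, where $G(k)=q\int_0^k\theta^{q-1}(k-\theta)\,d\theta$; this holds by monotonicity of $\mathfrak g_-$ in $u$ below the level. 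For $u\le\varepsilon k$ we have $\mathfrak g_-(u,k(t))\ge G(k(t))-C\varepsilon k^{q+1}$.

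The remaining issue is that $k(t)\ge k(t_0)$ fails, since $\ell$ increases. We control this by $G(k(t))\ge G(k(t_0))-Ck^{q}\eta_0k$, which is why $\eta_0\approx\alpha$. The ratio then becomes $(1-\alpha)/(1-C(\varepsilon+\eta_0))$ plus the $\delta$ and $\sigma$ error terms. With the choices above this is at most $1-\alpha/2$, which gives the claim for every $t\in(t_0,t_0+\delta k^{q-1}\varrho^{2s}]$.
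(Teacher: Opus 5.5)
Your proposal is correct and follows essentially the same route as the paper: the forward Caccioppoli estimate \eqref{fc} with $k(t)=k-\ell(t)$, $r=(1-\sigma)\varrho$, $S=\delta k^{q-1}\varrho^{2s}$, the annulus correction $N\sigma|B_\varrho|$, and the choices $\sigma\approx\alpha$, $\delta\approx\alpha\sigma^{N+2s}$, $\varepsilon,\eta_0\approx\alpha$. The only difference is cosmetic: the paper avoids the constant loss you flag in Step 3 by the sharp pointwise bounds $\frac{1}{q+1}w_-^{q+1}\le\mathfrak{g}_-(u,k(t))\le\frac{1}{q+1}k^{q+1}$ (the lower one uses $q>1$), which give the ratio $\bigl(1-\alpha+c\delta\sigma^{-N-2s}\bigr)/(1-\eta_0-\varepsilon)^{q+1}$ and are equivalent to your comparison of $G(k(t))$ with $G(k(t_0))$.
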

\begin{proof}
 We may assume, without loss of generality, that
$(x_0,t_0)=(0,0)$. We show that there exist $\delta,\varepsilon\in(0,1)$,
depending only on the data and on $\alpha$, such that if
\begin{equation} \label{hh11}
    \left|\{u(\cdot,0)\geq k\}\cap B_\varrho\right|
\ge
\alpha |B_\varrho|,
\end{equation}
and
\begin{equation} \label{eta0}
    \left(
\Theta \,
\textnormal{Tail}(u_-;Q)
\right)^{\frac1q}
\leq \eta_0k,
\end{equation}
then
\begin{equation} \label{res}
    \left|\{u(\cdot,t)>\varepsilon k\}\cap B_\varrho\right|
\ge
\frac{\alpha}{2}|B_\varrho|, \qquad \text{for every } t\in \left(0,\delta k^{q-1}\varrho^{2s}\right].
\end{equation}
For $\sigma \in (0,1)$ to be determined later,  we apply the Caccioppoli estimate \eqref{fc} for supersolution with $k(t)=  k - \ell(t)$, \( r=(1-\sigma)\varrho, \rho=\rho, R=2\varrho, S= \delta k^{q-1}\rho^{2s} \). We obtain, for every $t\in(0,{\delta k^{q-1}\rho^{2s}})$,
\begin{align} \label{rlh}
  &  \int_{B_{(1-\sigma)\varrho}} g_-(u(x,t),k(t))\,\dx  \leq \int_{B_\varrho} g_-(u(x,0),k(0))\,\dx  + \frac{c}{\sigma^2\varrho^{2s}} \int_0^{\delta k^{q-1}\rho^{2s}}\int_{B_\varrho} w_-^2(x,\tau)\,\dx\,\d\tau \notag \\ & \qquad + \frac{c}{\sigma^{N+2s}\varrho^{2s}} \left[ \esssup_{0<\tau<{\delta k^{q-1}\rho^{2s}}} \fint_{B_{2\varrho}}w_-(y,\tau)\,\dy  \right] \int_0^{\delta k^{q-1}\rho^{2s}}\int_{B_\varrho} w_-(x,\tau)\,\dx\,\d\tau =: I_1 + I_2 +I_3
\end{align}
where we dropped the second term on the left hand side. Let us estimate the three terms in the right hand side. For $I_1$, using \eqref{GG}, $k(0)\leq k$ and \eqref{hh11}, we get
\begin{equation} \label{forI1}
    I_1 \leq \frac{1}{q+1} |\{x \in B_\rho: u(x,0) < k\}|k^{q+1} \leq \frac{1}{q+1} (1-\alpha) k^{q+1}|B_\varrho|
\end{equation}
For $I_2$ and $I_3$, using \eqref{w-} and \(\sigma\in(0,1)\), we obtain
\begin{equation} \label{I12}
     I_2+I_3
\leq
\frac{c\delta}{\sigma^{N+2s}} k^{q+1}|B_\varrho|.
\end{equation}
Let us now estimate from below the first term on the left hand side of \eqref{rlh}. Using \eqref{suG} and the fact that $\ell(t) \leq \eta_0 k$ by \eqref{eta0}, we get
\begin{align} \label{sur}
     \int_{B_{(1-\sigma)\varrho}} g_-(u(x,t),k(t))\,\dx  & \geq \frac{1}{q+1}\int_{B_{(1-\sigma)\rho}} (u(x,t)-k(t))_{-}^{q+1} \chi_{ \{u(\cdot,t) <\varepsilon k  \}}\,\dx \notag \\ & \geq  \frac{\left(1- \eta_0 -\varepsilon \right )^{q+1}}{q+1} k^{q+1}|A_{\varepsilon,(1-\sigma)\rho} |  ,
\end{align}
where
$$ A_{\varepsilon,(1-\sigma)\rho} 
:=
\left\{
x\in B_{(1-\sigma)\rho}: u(x,t)<\varepsilon k 
\right\}.$$
Note that,
\begin{align} \label{aep}
|A_{\varepsilon,\rho} |
&=
\left|
A_{\varepsilon,(1-\sigma)\rho} 
\cup
\left(
A_{\varepsilon,\rho} \setminus A_{\varepsilon,(1-\sigma)\rho} 
\right)
\right| \notag \\
&\le
|A_{\varepsilon,(1-\sigma)\rho} |
+
|B_\rho\setminus B_{(1-\sigma)\rho}|\notag  \\
&\le
|A_{\varepsilon,(1-\sigma)\rho} |
+
N\sigma |B_\rho|.
\end{align}
Therefore, using \eqref{forI1}, \eqref{I12}, \eqref{sur}, \eqref{aep} in \eqref{rlh}, we get
\begin{align}
    |A_{\varepsilon,\rho} | \leq  \frac{1}{\left(1-\eta_0-\varepsilon\right)^{q+1}} \left (c \frac{\delta}{\sigma^{N+2s}} +(1-\alpha)\right )|B_\rho|  + N\sigma |B_\rho|.
\end{align}
Now, we choose $\sigma, \delta, \varepsilon, \eta_0$ such that
\begin{equation*}
    N\sigma=\frac{1}{4}\alpha, \qquad c\frac{\delta}{\sigma^{2s+N}}=  \frac{1}{8} \alpha, \qquad  \varepsilon + \eta_0 \leq 1- \left (  \frac{1-\frac{7}{8}\alpha}{1-\frac{3}{4}\alpha} \right )^{\frac{1}{q+1}},
\end{equation*}
In this way, we obtain
$$ |A_{\varepsilon,\rho} |\leq \left ( 1-\frac{1}{2}\alpha\right )|B_\rho|,$$
for all $0<t<\delta k^{q-1}\rho^{2s}$, that is \eqref{res}.
\end{proof}
\noindent
The following lemma is a measure shrinking result.
\begin{lemma}\label{lem4}
Let $q>1$, and let $u$ be a locally bounded, nonnegative, local weak super-solution to \eqref{eq:PDE}
in $\Omega_T$. Assume that for some $\alpha\in(0,1)$, $\delta,\sigma\in(0,1/2)$ and $k>0$, there holds
\[
\bigl|\{u(\cdot,t)\geq k\}\cap B_\varrho(x_0)\bigr|
\geq \alpha |B_\varrho|, \qquad \text{for every } t\in \bigl(t_0-\delta(\sigma k)^{q-1}\varrho^{2s},t_0\bigr].
\]
Let $\theta:=\delta(\sigma k)^{q-1}$ and assume \(Q_{\varrho}(\theta):= B_\rho(x_0)\times (t_0-\theta\rho^{2s},t_0)\subset Q\). Then there exists a constant $c>0$, depending only on the data and
independent of $\alpha,\delta,\sigma,k$, such that either
\[
\bigl(\Theta \,\textnormal{Tail}(u_{-};Q)\bigr)^{\frac1q}
> \frac{\sigma k}{4},
\]
or
\[
\bigl|\{u\leq \sigma k\}\cap Q_\varrho(\theta)\bigr|
\leq
\frac{c\sigma}{\delta\alpha}\,|Q_\varrho(\theta)|,
\]
provided the cylinders involved are included in $Q$.
\end{lemma}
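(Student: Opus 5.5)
The plan is to prove the alternative by a single application of the Caccioppoli estimate for supersolutions from \cite[Proposition 2.6]{FDF}, the same estimate used in the proof of Lemma~\ref{lem1}, with a time-dependent level corrected by the tail. The point is to exploit the nonlocal ``good term''
\[
\int\!\!\int w_-(x,t)\int_{B_\varrho}\frac{(u(y,t)-k(t))_+}{|x-y|^{N+2s}}\,\dy\,\dx\,\dt
\]
on the left-hand side, instead of a De Giorgi isoperimetric argument. This is why a single level suffices and the bound is linear in $\sigma$.

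Take $(x_0,t_0)=(0,0)$ and assume the tail alternative fails, i.e. $\bigl(\Theta\,\mathrm{Tail}(u_-;Q)\bigr)^{1/q}\le \sigma k/4$. Define $\ell(t)$ as in \eqref{elll}, so that $0\le\ell\le\sigma k/4$. Set the level $k(t):=2\sigma k-\ell(t)$, so that $\tfrac74\sigma k\le k(t)\le 2\sigma k$, and put $w_-=(u-k(t))_-\le 2\sigma k$. Apply the Caccioppoli estimate on the inner cylinder $Q_\varrho(\theta)$ with outer cylinder $B_{2\varrho}\times(-2\theta\varrho^{2s},0)$ and $R=2\varrho$; I am assuming ``the cylinders involved are included in $Q$'' covers this enlargement. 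As in Remark~\ref{remark1} and Lemma~\ref{lem1}, the $\ell'(t)$ term absorbs the $u_-$ tail. The right-hand side is then bounded crudely using $w_-\le 2\sigma k$ and \eqref{suG}:
\begin{itemize}
\item the time cut-off term is at most $\frac{c}{\theta\varrho^{2s}}(\sigma k)^{q+1}|Q_\varrho(\theta)|=\frac{c}{\delta\varrho^{2s}}(\sigma k)^2|Q_\varrho(\theta)|$, using $\theta=\delta(\sigma k)^{q-1}$;
\item the $w_-^2$ term and the $\fint_{B_{2\varrho}} w_-\cdot\iint w_-$ term are each at most $\frac{c}{\varrho^{2s}}(\sigma k)^2|Q_\varrho(\theta)|$.
\end{itemize}
Since $\delta<1$, the whole right-hand side is at most $\frac{c}{\delta\varrho^{2s}}(\sigma k)^2|Q_\varrho(\theta)|$.

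For the lower bound, keep only the good term and restrict $x,y\in B_\varrho$ and $t\in(-\theta\varrho^{2s},0)$. If $u(x,t)\le\sigma k$, then $w_-(x,t)\ge \tfrac34\sigma k$. If $u(y,t)\ge k$, then $(u(y,t)-k(t))_+\ge k-2\sigma k\ge k/2$, after restricting to $\sigma\le1/4$; larger $\sigma$ is harmless since then $c\sigma/(\delta\alpha)\ge1$ for $c$ large. For $x,y\in B_\varrho$ we have $|x-y|^{-N-2s}\ge(2\varrho)^{-N-2s}$. By hypothesis, for each such $t$ the set $\{u(\cdot,t)\ge k\}\cap B_\varrho$ has measure at least $\alpha|B_\varrho|$. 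Hence the good term is at least
\[
c\,\sigma k^2\,\alpha\,\frac{|B_\varrho|}{\varrho^{N+2s}}\,\bigl|\{u\le\sigma k\}\cap Q_\varrho(\theta)\bigr|
= \frac{c\,\sigma\alpha k^2}{\varrho^{2s}}\,\bigl|\{u\le\sigma k\}\cap Q_\varrho(\theta)\bigr|.
\]
Comparing with the upper bound gives $|\{u\le\sigma k\}\cap Q_\varrho(\theta)|\le \frac{c\sigma}{\delta\alpha}|Q_\varrho(\theta)|$, with $c$ depending only on the data.

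The main obstacle is the first step. The Caccioppoli estimate must contain the positive-part interaction term on the left for supersolutions with a time-dependent level. This term is present in \eqref{fc}, and I would check it is also retained in the version with the standard time cut-off, re-deriving it from \cite[Proposition 2.1]{FDF} if needed. One must also verify that the $u_-$ tail is exactly compensated by $\ell'$ when $R=2\varrho$ and the inner radius is $\varrho$, so that $\Theta$ is the right constant.
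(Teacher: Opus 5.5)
Your argument is the paper's proof in all essentials. It uses one Caccioppoli estimate with the tail-corrected level $k(t)=c_0\sigma k-\ell(t)$ and keeps only the interaction term $\int\!\!\int w_-(x,t)\int_{B_\varrho}(u(y,t)-k(t))_+|x-y|^{-N-2s}\,\dy\,\dx\,\dt$ on the left. It bounds the right-hand side crudely by $c(\sigma k)^{q+1}|B_\varrho|=\frac{c(\sigma k)^2}{\delta\varrho^{2s}}|Q_\varrho(\theta)|$ and bounds the interaction term below, slice by slice, by $\frac{c\alpha\sigma k^2}{\varrho^{2s}}|\{u\le\sigma k\}\cap Q_\varrho(\theta)|$.

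The differences are minor. The paper uses the initial-energy variant \eqref{fc} directly on $B_{2\varrho}\times(-\theta\varrho^{2s},0)$. The time cut-off term is then replaced by $\int_{B_{3\varrho/2}}\mathfrak{g}_-\bigl(u(x,-\theta\varrho^{2s}),k(-\theta\varrho^{2s})\bigr)\,\dx\le c(\sigma k)^{q+1}|B_\varrho|$, so no enlargement in time is needed. This matters in the proof of Theorem~\ref{prop:expansion_positivity}, where the lemma is applied to backward cylinders whose bottom may be as low as $t_0$. The paper's level $\frac32\sigma k-\ell(t)$ also gives $(u-k(t))_+\ge k/4$ on $\{u\ge k\}$ for every $\sigma<1/2$, so your reduction to $\sigma\le1/4$, while correct, is unnecessary.

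The interaction term you worry about is not an obstacle. It comes from the purely spatial identity $(w_+(x)-w_+(y))(w_-(x)\zeta^2(x)-w_-(y)\zeta^2(y))=-w_+(x)w_-(y)\zeta^2(y)-w_+(y)w_-(x)\zeta^2(x)\le0$, independent of the time cut-off. That is why it appears in \eqref{fc}, which differs from \cite[Proposition 2.1]{FDF} only in the time cut-off.

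One step fails as written: the spatial support of your cut-off cannot be $B_{2\varrho}=B_R$. The absorption of the $u_-$ tail by $\ell'$ uses $|x-y|\ge(1-\tilde\varrho/R)|y|$ for $x\in B_{\tilde\varrho}\supset\operatorname{supp}\zeta$ and $y\notin B_R$. This produces the factor $2\texttt{L}(R/(R-\tilde\varrho))^{N+2s}$ of Remark~\ref{remark1}. With support radius equal to $R$ there is no such bound. Negative values of $u$ just outside $B_{2\varrho}$ are then not controlled by $\Theta\,\mathrm{Tail}(u_-;Q)$ with a constant depending only on the data.

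So what matters for $\Theta$ is the support radius, not the inner radius $\varrho$ you mention. Take the support in $B_{3\varrho/2}$, as the paper does. All right-hand terms still scale like $(\sigma k)^2\varrho^{-2s}|Q_\varrho(\theta)|$, and the rest of your argument goes through unchanged.

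Strictly, support in $B_{3\varrho/2}$ with $R=2\varrho$ yields the factor $2\cdot4^{N+2s}\texttt{L}$ rather than $\Theta=2^{N+2s+1}\texttt{L}$; the latter corresponds to support in $B_\varrho$. The paper's own proof is loose in exactly the same way. The discrepancy is absorbed by raising $c_0$ by a data-dependent amount, so it only changes the constant $c$.
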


\begin{proof}
We may assume, without loss of generality, that $(x_0,t_0)=(0,0)$. We show that, if
\begin{equation}\label{ms1}
|\{u(\cdot,t)\geq k\}\cap B_\varrho|\geq \alpha |B_\varrho|, \qquad \text{for every } t\in \bigl(-\delta(\sigma k)^{q-1}\varrho^{2s},0\bigr],
\end{equation}
and
\begin{equation}\label{ms2}
\bigl(\Theta \,\textnormal{Tail}(u_-;Q)\bigr)^{\frac1q}\leq \frac{\sigma k}{4},
\end{equation}
then
\[
|\{u\leq \sigma k\}\cap Q_\varrho(\theta)|
\leq
\frac{c\sigma}{\delta\alpha}\,|Q_\varrho(\theta)|,
\qquad
\theta:=\delta(\sigma k)^{q-1}.
\]
Let $\ell$ be as in \eqref{elll} with $S=\theta \rho^{2s}$, then, by
\eqref{ms2}, we have
\begin{equation}\label{ms3}
0\leq \ell(t)\leq \frac{\sigma k}{4}
\qquad\text{for every }t\in(-\theta\varrho^{2s},0).
\end{equation}
We use the Caccioppoli estimate in the form \eqref{fc} on backward cylinder $B_{2\varrho}\times(-\theta\varrho^{2s},0)$, with $S=\theta\varrho^{2s}$, $r=\varrho$, $\varrho=3\varrho/2$, $R=2\varrho$ and with $w_{-}(x,t):= (u(x,t)-k(t))_{-}$, where $k(t):=\frac{3}{2}\sigma k-\ell(t)$. Dropping the first two terms on the left hand side, we obtain
\begin{align}
&\int_{-\theta\varrho^{2s}}^0
\int_{B_\varrho}
w_-(x,t)
\left(
\int_{B_\varrho}
\frac{\bigl(u(y,t)-k(t)\bigr)_+}{|x-y|^{N+2s}}\,\dy 
\right)dxdt
\nonumber\\
& \qquad \leq
\int_{B_{\frac32\varrho}}
g_-\bigl(u(x,-\theta\varrho^{2s}),\frac{3}{2}\sigma k-\ell(-\theta\varrho^{2s})\bigr)\,\dx 
+\frac{c}{\varrho^{2s}}
\int_{-\theta\varrho^{2s}}^0
\int_{B_{\frac32\varrho}} w_-^2\,\dx\,\dt
\nonumber\\
&\qquad \quad
+\frac{c}{\varrho^{2s}}
\left[
\operatorname*{ess\,sup}_{-\theta\varrho^{2s}<t<0}
\fint_{B_{2\varrho}} w_-(y,t)\,\dy 
\right]
\int_{-\theta\varrho^{2s}}^0
\int_{B_{\frac32\varrho}} w_-\,\dx\,\dt =: I_1 + I_2 + I_3.
\label{ms:caccioppoli}
\end{align}
Let us estimate the three terms in the right hand side above.  Note that, by \eqref{ms3}, it holds
\begin{equation} \label{ms4}
\frac{5}{4}\sigma k\leq k(t)\leq \frac{3}{2}\sigma k,
\qquad 0\leq w_-\leq \frac{3}{2}\sigma k
\quad\text{in }B_{2\varrho}\times(-\theta\varrho^{2s},0).
\end{equation}
For $I_1$, using \eqref{GG}, the nonnegativity of $u$ and \eqref{ms4}, we get
\begin{equation} \label{ii1}
    I_1\leq c(\sigma k)^{q+1}|B_{\frac32\varrho}|.
\end{equation}
For $I_2$, by \eqref{ms4} and the definition of $\theta$, we get
\begin{equation} \label{ii2}
    I_2
\leq
\frac{c}{\varrho^{2s}}
(\sigma k)^2
\bigl|B_{\frac32\varrho}\times(-\theta\varrho^{2s},0)\bigr|
\leq
c\theta(\sigma k)^2 |B_\varrho| \leq c(\sigma k)^{q+1}|B_\varrho|.
\end{equation}
For $I_3$, using again \eqref{ms4} and the definition of $\theta$, we have
\begin{equation} \label{ii3}
    I_3
\leq
\frac{c}{\varrho^{2s}}(\sigma k)^2
\theta\varrho^{2s}|B_{\frac32\varrho}|
\leq
c\theta(\sigma k)^2|B_\varrho| \leq c(\sigma k)^{q+1}|B_\varrho|.
\end{equation}
Set
\[
A:= \left \{(x,t) \in Q_\rho (\theta):u(x,t)\leq \sigma k \right \}.
\]
For $(x,t)\in A$ we have
\begin{equation} \label{b1}
    w_-(x,t)=\frac{3}{2}\sigma k-\ell(t)-u(x,t)
\geq \frac{3}{2}\sigma k-\frac14\sigma k-\sigma k
=\frac14\sigma k.
\end{equation}
On the other hand, on $\{u(\cdot,t)\geq k\}\cap B_\varrho$ we have
\begin{equation} \label{b2}
    \bigl(u(y,t)-\frac{3}{2}\sigma k+\ell(t)\bigr)_+
\geq  \frac{1}{4}k,
\end{equation}
since $\sigma<1/2$. Hence, denoting with $L$ the term on the left hand side of \eqref{ms:caccioppoli} and using \eqref{b1}, \eqref{b2} and \eqref{ms1}, we get
\begin{align} \label{iil}
L & \geq \int_{-\theta\varrho^{2s}}^0
\int_{B_\varrho}
w_-(x,t)
\left(
\int_{\{u\geq k \} \cap B_\varrho}
\frac{\bigl(u(y,t)-k(t)\bigr)_+}{|x-y|^{N+2s}}\,\dy 
\right)dxdt
\nonumber\\
&\qquad\geq
c\,\sigma k^2
\int_{-\theta\varrho^{2s}}^0
\int_{A}
\left(
\int_{\{u\geq k\}\cap B_\varrho}
\frac{\dy}{|x-y|^{N+2s}}
\right)dxdt
\nonumber\\
&\qquad\geq
c\,\sigma k^2
\frac{\alpha |B_\varrho|}{\varrho^{N+2s}}
\bigl|A\bigr|
\nonumber\\
&\qquad\geq
\frac{c\alpha\sigma k^2}{\varrho^{2s}}
\bigl|A\bigr|.
\end{align}
Therefore, combining \eqref{ii1}-\eqref{iil} with \eqref{ms:caccioppoli}, we get
\[ \frac{c\alpha\sigma k^2}{\varrho^{2s}} \bigl|\{u\leq\sigma k\}\cap Q_\varrho(\theta)\bigr| \leq c(\sigma k)^{q+1}|B_\varrho|. \] Since $\theta=\delta(\sigma k)^{q-1}$, this yields \[ \bigl|\{u\leq \sigma k\}\cap Q_\varrho(\theta)\bigr| \leq \frac{c\sigma}{\delta\alpha} |Q_\varrho(\theta)|. \]
After suitably adjusting the constants, whose dependence remains only on the data and on $\alpha$, the conclusion follows.

\end{proof}
\subsection{Proof of expansion of positivity}
We now combine the previous lemmas to prove Theorem \ref{prop:expansion_positivity}. 
\begin{proof}[Proof of Theorem \ref{prop:expansion_positivity}]
We may assume $(x_0,t_0)=(0,0)$. As a restatement, we show that there exist $\delta,\eta\in(0,1)$, depending only on the data and on $\alpha$, such that if
    \begin{equation} \label{c1}
        |\{u(\cdot,0)\geq k\}\cap B_\varrho|\geq \alpha |B_\varrho|
    \end{equation}
and if
\begin{equation} \label{c2}
    \bigl(\Theta \,\textnormal{Tail}(u_-;Q)\bigr)^{\frac1q}\leq \eta k,
\end{equation}
then
\[
u\geq \eta k
\quad\text{a.e. in }B_{2\varrho}\times
\left(\frac12\delta k^{q-1}\varrho^{2s},\delta k^{q-1}\varrho^{2s}\right].
\]
At the initial time $t_0=0$, we regard the measure assumption in the larger ball $B_{4\varrho}$ and replace $\alpha$ by $4^{-N}\alpha$. Take in \eqref{c2} $\eta \leq \eta_0$, where $\eta_0$ is detected in Lemma \ref{lem3}. Then, by \eqref{c1} and \eqref{c2}, we can apply Lemma \ref{lem3} and obtain
 $\delta, \varepsilon \in (0,1)$ depending on data and $\alpha$, such that
\[
|\{u(\cdot,t)\geq \varepsilon k\}\cap B_{4\varrho}|
\geq
\frac{\alpha}{2}\,4^{-N}|B_{4\varrho}|
\qquad
\text{for all }t\in\bigl(0,\delta k^{q-1}(4\varrho)^{2s}\bigr].
\]
Therefore, we can apply Lemma \ref{lem4} in the cylinder 
$$Q_{4\rho}(\delta(\sigma\varepsilon k)^{q-1}):=B_{4\varrho}\times \bigl(\bar t-\delta(\sigma\varepsilon k)^{q-1}(4\varrho)^{2s},\bar t\bigr), \qquad \text{with } \bar t\in\bigl(\delta(\sigma\varepsilon k)^{q-1}(4\varrho)^{2s}, \delta k^{q-1}(4\varrho)^{2s}\bigr],$$
and with $k$ replaced by $\varepsilon k$ and $\alpha$ replaced by $\frac\alpha2 4^{-N}$. This is admissible since $\sigma,\varepsilon\in(0,1)$ and $q>1$, so that
\(
\delta(\sigma\varepsilon k)^{q-1}\leq \delta k^{q-1}.
\)
Therefore the backward cylinders considered above are contained in
\(
B_{4\varrho}\times\bigl(0,\delta k^{q-1}(4\varrho)^{2s}\bigr].
\)
So, enforcing
$$ \eta := \min \left \{\eta_0, \frac{\sigma \varepsilon}{4} \right \},$$
we obtain
\[ \bigl|\{u\leq \sigma\varepsilon k\}\cap Q_{4\rho}(\delta(\sigma\varepsilon k)^{q-1})\bigr| \leq \frac{c\sigma}{\delta\alpha}|Q_{4\rho}(\delta(\sigma \varepsilon k)^{q-1})|. \]
Now, let $\nu\in(0,1)$ be the constant determined in Lemma \ref{lem1}, depending only on the data and $\delta$. Recalling that the constant $c$ in Lemma \ref{lem4} is independent of $\sigma$, we choose $\sigma\in(0,1)$ small enough, according to Lemma \ref{lem4}, so that
\[
\frac{c\sigma}{\delta\alpha}<\nu .
\] 
Therefore, 
we can apply Lemma \ref{lem1} in the cylinders 
\[ B_{4\varrho}\times \bigl(\bar t-\delta(\sigma\varepsilon k)^{q-1}(4\varrho)^{2s},\bar t\bigr), \qquad \text{for every }\bar t\in \bigl(\delta(\sigma\varepsilon k)^{q-1}(4\varrho)^{2s}, \delta k^{q-1}(4\varrho)^{2s}\bigr], \]
and with $k$ replaced by $\sigma\varepsilon k$. Since $\bar{t}$ is arbitrary, we conclude \[
u\geq \frac{\sigma\varepsilon k}{4}
\quad\text{a.e. in }\;
B_{2\varrho}\times
\bigl(\bar t-\delta(\sigma\varepsilon k)^{q-1}(2\varrho)^{2s},\bar t\bigr).
\]
The proof is complete.

\end{proof}

\section{Time mollifications} \label{sec5}\label{timemollification}
\noindent
In this section we recall a time mollification argument which justifies the use of test functions depending on the solution itself and yields the inequality \eqref{tst}.
For all $v:\Omega_T\to\R$ and all $h\in(0,1)$, we define two finite convolutions of $v$ with exponential weight functions by setting, for all $(x,t)\in\Omega_T$,
\[
v_h(x,t) = \int_0^t\frac{ v(x,\tau)}{h}e^\frac{\tau-t}{h}\,\d\tau,
\qquad
v_{\bar h}(x,t) = \int_t^T \frac{v(x,\tau)}{h}e^\frac{t-\tau}{h}\,\d\tau.
\]
\noindent
The following lemma provides a rigorous justification of inequality \eqref{tst1}. For simplicity it is stated for $(x_0,\bar{t})=(0,0)$.

\begin{lemma}\label{mol}
Let $q>1$ and let $u$ be a local, nonnegative, weak super-solution to~\eqref{eq:PDE} in $\Omega_T$. Consider any cylinder $B_{4 \rho}(0) \times (0,t) \Subset \Omega_{T}$ and let $\hat B\subset B$, $\xi$, $\nu$ be as in \eqref{BB}, \eqref{property_of_tst_fct} and \eqref{tf}, respectively. Then there exists a constant $\gamma>0$, depending only on the data, such that
\begin{align*}
0\leq{}&
t^{\frac12}
\int_{\hat B}\xi^2(x)
\left[
\int_0^{u(x,t)}
s^{q-1}(s+\nu)^{\frac{1-q}{2}}\,ds
\right]\,\dx
-\frac12
\int_0^t\tau^{-\frac12}
\int_{\hat B}\xi^2(x)
\left[
\int_0^{u(x,\tau)}
s^{q-1}(s+\nu)^{\frac{1-q}{2}}\,ds
\right]\,\dx\,\d\tau
\\
&+\gamma
\int_0^t\tau^{\frac12}
\iint_{\R^N\times\R^N}
\bigl(u(x,\tau)-u(y,\tau)\bigr)
\left[
\bigl(u(x,\tau)+\nu\bigr)^{\frac{1-q}{2}}\xi^2(x)
-
\bigl(u(y,\tau)+\nu\bigr)^{\frac{1-q}{2}}\xi^2(y)
\right]
K(x,y,\tau)\,\dx\,\dy\,\d\tau.
\end{align*}
\end{lemma}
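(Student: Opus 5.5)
The plan is to test the weak formulation \eqref{eq} with a time-mollified version of the test function \eqref{tf} and then pass to the limit $h\to0$. Since $u$ is only continuous in time with values in $L^{q+1}_{\loc}$, the expression $\varphi=\tau^{1/2}(u+\nu)^{-\frac{q-1}{2}}\xi^2$ does not possess a time derivative, so I would use instead
\[
\varphi_h(x,\tau):=\tau^{\frac12}\,\bigl(u_h(x,\tau)+\nu\bigr)^{-\frac{q-1}{2}}\xi^2(x)\,\zeta_\epsilon(\tau),
\]
where $\zeta_\epsilon$ is a piecewise linear cut-off that equals $1$ on $[0,t-\epsilon]$ and vanishes at $t$. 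Since $u_h\ge0$ on $\hat B$ and $\nu>0$, this function is bounded, nonnegative, Lipschitz in $u_h$, belongs to $L^2(0,t;W^{s,2}_0(\hat B))$ (because $\xi$ has compact support in $\hat B$ and $r\mapsto (r+\nu)^{-\frac{q-1}{2}}$ is Lipschitz on $[0,\infty)$), and lies in $W^{1,q+1}$ in time, since $\partial_\tau u_h=(u-u_h)/h$. It is therefore admissible. Because $\varphi_h$ vanishes at $\tau=0$ (due to the factor $\tau^{1/2}$) and at $\tau=t$, the boundary terms disappear. I would then rewrite the parabolic term in the standard mollified form $\iint \partial_\tau\bigl[(u^q)_h\bigr]\varphi_h$ plus a remainder $\iint\bigl(u^q-(u^q)_h\bigr)\partial_\tau\varphi_h$ that tends to zero. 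This follows the procedure of \cite{CCI}, and also \cite{BDL_21} for the doubly nonlinear case.

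The key step is the sign/convexity argument for the time term. Since $\partial_\tau (u^q)_h=(u^q-(u^q)_h)/h$, I would work with the mollification of $u$ itself, use $\partial_\tau u_h=(u-u_h)/h$, and apply the monotonicity of $r\mapsto r^{q}$ and $r\mapsto (r+\nu)^{-\frac{q-1}{2}}$. This yields the one-sided inequality
\[
-\iint u^q\,\partial_\tau\varphi_h \;\le\; \iint \partial_\tau\Bigl[\tau^{\frac12}\xi^2\,G(u_h)\Bigr]\zeta_\epsilon\,\dx\,\d\tau-\frac12\iint \tau^{-\frac12}\xi^2 G(u_h)\zeta_\epsilon\,\dx\,\d\tau+o(1),
\]
where $G(r):=\int_0^r \theta^{q-1}(\theta+\nu)^{\frac{1-q}{2}}\,\d\theta$. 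To obtain it, I would compare $(u^q-u_h^q)\,(u_h+\nu)^{-\frac{q-1}{2}}$ with $G(u)-G(u_h)$, which has the same sign as $u-u_h$, using the convexity inequality $G(b)-G(a)\ge G'(a)(b-a)$ up to the $u^q$ versus $u_h^q$ mismatch. I expect this to be the main obstacle: $G'(r)=r^{q-1}(r+\nu)^{\frac{1-q}{2}}$ is not of the form $q r^{q-1}\cdot(\text{test factor})$, so the chain rule does not close exactly. I would handle this by mollifying $u^q$ and writing $v:=u^q$, $\varphi=\tau^{1/2}(v^{1/q}+\nu)^{-\frac{q-1}{2}}\xi^2$. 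Then the map $v\mapsto (v^{1/q}+\nu)^{-\frac{q-1}{2}}$ is monotone decreasing, and the associated primitive $\int_0^v (\theta^{1/q}+\nu)^{-\frac{q-1}{2}}\,\d\theta$, after the change of variables $\theta=r^q$, is exactly $qG(u)$, which is absorbed into $\gamma$. The standard Landes-type inequality $\partial_\tau v_h\,(\Phi(v)-\Phi(v_h))\le 0$ for monotone $\Phi$ then gives the correct sign.

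Finally, I would pass to the limit. As $h\to0$, the convergences $v_h\to v$ in $L^{1+1/q}$ and $u_h\to u$ in $L^2(W^{s,2})$ on $\hat B$, together with dominated convergence in the nonlocal term (the outer region is controlled by the tail condition \eqref{ineq:L_1_tail_condition} and the boundedness of $(u+\nu)^{\frac{1-q}{2}}\le\nu^{\frac{1-q}{2}}$), identify the limit of the diffusion term with the one in the statement. Letting then $\epsilon\to0$, the term $-\iint \tau^{1/2}\xi^2 G(u)\zeta_\epsilon'$ converges to $t^{1/2}\int_{\hat B}\xi^2 G(u(x,t))\,\dx$ by time continuity of $u$ in $L^{q+1}_{\loc}$, and the remaining term gives $-\frac12\int_0^t\tau^{-1/2}\int\xi^2G(u)$. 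The constant $\gamma$ comes from the normalization of $K$ and the factor $q$ from the change of variables.
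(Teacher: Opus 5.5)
There is a genuine gap in the sign argument for the time term, and it comes from the direction of the mollification. Write $\Phi(r)=(r+\nu)^{\frac{1-q}{2}}$, which is decreasing.

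\textbf{The remainder has the wrong sign.} You insert the forward mollification $u_h$, with $\partial_\tau u_h=(u-u_h)/h$, into $\Phi$ and test \eqref{eq} directly. Split $u^q=u_h^q+(u^q-u_h^q)$. The main part closes exactly, because $\partial_\tau(u_h^q)\,\Phi(u_h)=q\,\partial_\tau G(u_h)$. So the chain rule does close, and the detour through $v=u^q$ is not needed. The remainder $-\iint(u^q-u_h^q)\,\partial_\tau\varphi_h$, however, contains
\[
\frac{q-1}{2h}\int_0^t\int_{\hat B}\bigl(u^q-u_h^q\bigr)\bigl(u-u_h\bigr)\bigl(u_h+\nu\bigr)^{-\frac{q+1}{2}}\tau^{\frac12}\xi^2\zeta_\eps\,\dx\,\d\tau\;\ge\;0 .
\]
This has the wrong sign for an upper bound. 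No available estimate forces it to vanish as $h\to0$, since $u$ is only a supersolution, continuous in time with values in $L^{q+1}$.

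The variant $\Psi(v):=\Phi(v^{1/q})$, with test factor $\Psi(v_h)$ and $v_h=(u^q)_h$, fails the same way: the remainder is $-\frac1h\iint\Psi'(v_h)(v-v_h)^2\tau^{1/2}\xi^2\zeta_\eps\ge0$. For the same reason, your claim that $\iint(u^q-(u^q)_h)\partial_\tau\varphi_h\to0$ is unfounded, since $\partial_\tau\varphi_h$ carries the factor $(u-u_h)/h$.

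\textbf{The Landes inequality does not fit your test function.} The inequality $\partial_\tau v_h\,(\Psi(v)-\Psi(v_h))\le0$ helps only when the time term reads $\iint\partial_\tau v_h\,\Psi(v)\,w$. That happens when the \emph{unmollified} function $\tau^{1/2}\Psi(v)\xi^2\zeta_\eps$ is tested against the mollified equation. You get this by inserting its backward mollification $[\tau^{1/2}\Psi(v)\xi^2\zeta_\eps]_{\bar h}$ into \eqref{eq}. That is not the test function you define, and your limit argument for the diffusion term (via $u_h\to u$ in $L^2(W^{s,2})$) belongs to the other scheme.

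\textbf{How the paper avoids this.} It mollifies backward in time and tests with $\varphi_h^\eps=\tau^{1/2}(u_{\bar h}+\nu)^{\frac{1-q}{2}}\xi^2\psi_\eps$, where $\partial_\tau u_{\bar h}=(u_{\bar h}-u)/h$. After adding and subtracting $u_{\bar h}^q\,\partial_\tau\varphi_h^\eps$:
\begin{itemize}
\item the main term becomes $q\,\partial_\tau G(u_{\bar h})$ by the chain rule;
\item the term containing $\partial_\tau(\tau^{1/2}\psi_\eps)$ tends to $0$ as $h\to0$;
\item the critical remainder
\[
\frac{1-q}{2}\int_0^T\int_{\hat B}\bigl(u_{\bar h}^q-u^q\bigr)\bigl(u_{\bar h}+\nu\bigr)^{-\frac{q+1}{2}}\frac{u_{\bar h}-u}{h}\,\xi^2\tau^{\frac12}\psi_\eps\,\dx\,\d\tau
\]
is now nonpositive and can be dropped.
\end{itemize}
Your Landes idea can also be made to work, but only with the backward-mollified test function described above. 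Then you must pass to the limit in the diffusion term with the mollified test function converging to the unmollified one, including the exterior part through the $L^1$ tail.

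\textbf{A smaller point.} With $\zeta_\eps\equiv1$ near $\tau=0$, the factor $\tau^{1/2}$ has derivative $\tfrac12\tau^{-1/2}\notin L^{q+1}$ near $0$ for $q>1$. So $\varphi_h$ is not in $W^{1,q+1}$ in time, as the definition of test function requires. This is why the paper's $\psi_\eps$ also ramps up linearly on $[0,\eps]$.
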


\begin{proof}
For $\varepsilon \in (0,t/2)$, define the Lipschitz function
$\psi_{\varepsilon}:[0,T]\to[0,1]$ by
\[
\psi_{\varepsilon}(\tau)
:=
\begin{cases}
\dfrac{\tau}{\varepsilon},
& 0 \leq \tau < \varepsilon,\\[2mm]
1,
& \varepsilon \leq \tau < t-\varepsilon,\\[2mm]
\dfrac{t-\tau}{\varepsilon},
& t-\varepsilon \leq \tau < t,\\[2mm]
0,
& t \leq \tau \leq T.
\end{cases}
\]
Now, fix $h \in (0,1)$ and set, for all $(x,\tau) \in \R^N \times (0,T)$,
\begin{equation*}
    \varphi_h^{\eps} (x,\tau)
    =
    \tau^{\frac{1}{2}}
    (u_{\bar{h}} (x,\tau) + \nu)^{\frac{1-q}{2}}
    \xi^2(x) \psi_{\eps}(\tau).
\end{equation*}
By \cite[Lemma B.1]{MY}, the function
\[
\varphi^\eps_h
\in
W^{1,q+1}_{\rm loc}(0,T;L^{q+1}(\hat B))
\cap
L^2_{\rm loc}(0,T;W^{s,2}_0(\hat B))
\]
is a suitable nonnegative test function for~\eqref{eq}. Moreover,
$\varphi^\eps_h(\cdot,\tau)=0$ for $\tau\notin(0,t)$ and
$\xi=0$ in $\hat B^c$. Therefore, using \eqref{eq}, we obtain
\begin{align}\label{mol1}
0 \leq {}&
-\int_{0}^{T}\int_{\hat B}
u^q(x,\tau)
{\partial_\tau \varphi^\eps_h}(x,\tau)
\,\dx\,\d\tau
\nonumber\\
&+
\int_{0}^{T}
\iint_{\R^N\times\R^N}
(u(x,\tau)-u(y,\tau))
\bigl(
\varphi^\eps_h(x,\tau)-\varphi^\eps_h(y,\tau)
\bigr)
K(x,y,\tau)
\,\dx\,\dy\,\d\tau
\nonumber\\
=:{}& H_1+H_2.
\end{align}
We now estimate both $H_1$ and $H_2$ from above, and then pass to the limit as $h,\eps \to 0^{+}$.

We first focus on the parabolic term $H_1$. Adding and subtracting the term
$u_{\bar h}^{q}\partial_\tau\varphi_h^\eps$, using \cite[Lemma B.1]{MY},
the chain rule and the definition of $\psi_\eps$, we obtain
\begin{align*}
H_1
={}&-\int_0^T\int_{\hat B}u^q(x,\tau)\partial_\tau\varphi_h^\eps(x,\tau)\,\dx\,\d\tau
\\
={}&-\int_0^T\int_{\hat B}u_{\bar h}^q(x,\tau)\partial_\tau\varphi_h^\eps(x,\tau)\,\dx\,\d\tau
+\int_0^T\int_{\hat B}\bigl(u_{\bar h}^q(x,\tau)-u^q(x,\tau)\bigr)
\partial_\tau\varphi_h^\eps(x,\tau)\,\dx\,\d\tau
\\
={}&\int_0^T\int_{\hat B}\partial_\tau u_{\bar h}^q(x,\tau)\varphi_h^\eps(x,\tau)\,\dx\,\d\tau
\\
&+\int_0^T\int_{\hat B}\bigl(u_{\bar h}^q(x,\tau)-u^q(x,\tau)\bigr)
\bigl(u_{\bar h}(x,\tau)+\nu\bigr)^{\frac{1-q}{2}}\xi^2(x)
\partial_\tau\bigl(\tau^{\frac12}\psi_\eps(\tau)\bigr)\,\dx\,\d\tau
\\
&+\frac{1-q}{2}\int_0^T\int_{\hat B}\bigl(u_{\bar h}^q(x,\tau)-u^q(x,\tau)\bigr)
\bigl(u_{\bar h}(x,\tau)+\nu\bigr)^{-\frac{q+1}{2}}
\frac{u_{\bar h}(x,\tau)-u(x,\tau)}{h}
\xi^2(x)\tau^{\frac12}\psi_\eps(\tau)\,\dx\,\d\tau
\\
=:{}&H_3+H_4+H_5.
\end{align*}
\noindent
Observe that $H_5 \leq 0$ since the map $u \mapsto u^q$ is monotone. The term $H_4$ goes to $0$ as $h \to 0^{+}$ by \cite[Lemma B.1]{MY} with $r=q$.

As regards $H_3$, using integration by parts and the fact that
$\psi_\eps(0)=\psi_\eps(T)=0$, we obtain
\begin{align*}
H_3
={}&\int_0^T\int_{\hat B}\partial_\tau u_{\bar h}^q(x,\tau)
\bigl(u_{\bar h}(x,\tau)+\nu\bigr)^{\frac{1-q}{2}}
\xi^2(x)\tau^{\frac12}\psi_\eps(\tau)\,\dx\,\d\tau
\\
={}&q\int_0^T\int_{\hat B}\xi^2(x)\tau^{\frac12}\psi_\eps(\tau)
\partial_\tau\left[
\int_0^{u_{\bar h}(x,\tau)}
(s+\nu)^{\frac{1-q}{2}}s^{q-1}\,ds
\right]\,\dx\,\d\tau
\\
={}&-q\int_0^T\int_{\hat B}\xi^2(x)\tau^{\frac12}
\partial_\tau\psi_\eps(\tau)
\left[
\int_0^{u_{\bar h}(x,\tau)}
(s+\nu)^{\frac{1-q}{2}}s^{q-1}\,ds
\right]\,\dx\,\d\tau
\\
&-\frac{q}{2}\int_0^T\int_{\hat B}\xi^2(x)\tau^{-\frac12}
\psi_\eps(\tau)
\left[
\int_0^{u_{\bar h}(x,\tau)}
(s+\nu)^{\frac{1-q}{2}}s^{q-1}\,ds
\right]\,\dx\,\d\tau
\\
\longrightarrow{}&
q t^{\frac12}\int_{\hat B}\xi^2(x)
\left[
\int_0^{u(x,t)}
(s+\nu)^{\frac{1-q}{2}}s^{q-1}\,ds
\right]\,\dx
\\
&-\frac{q}{2}\int_0^t\int_{\hat B}\xi^2(x)\tau^{-\frac12}
\left[
\int_0^{u(x,\tau)}
(s+\nu)^{\frac{1-q}{2}}s^{q-1}\,ds
\right]\,\dx\,\d\tau,
\end{align*}
where we first let $h\to0^+$, using \cite[Lemma B.1]{MY}, and then $\eps\to0^+$. Therefore, we estimate $H_1$ in \eqref{mol1} by
\begin{equation}\label{mol2}
\begin{aligned}
\limsup_{\eps\to0^+}\limsup_{h\to0^+}\frac{H_1}{q}
\leq{}&
t^{\frac12}\int_{\hat B}\xi^2(x)
\left[\int_0^{u(x,t)}(s+\nu)^{\frac{1-q}{2}}s^{q-1}\,ds\right]\,\dx
\\
&-\frac{1}{2}\int_0^t\int_{\hat B}\xi^2(x)\tau^{-\frac12}
\left[\int_0^{u(x,\tau)}(s+\nu)^{\frac{1-q}{2}}s^{q-1}\,ds\right]\,\dx\,\d\tau.
\end{aligned}
\end{equation}

Now, we focus on the diffusion term $H_2$. For all
$(x,y,\tau)\in\R^N\times\R^N\times(0,T)$, set
\[
G(x,y,\tau):=u(x,\tau)-u(y,\tau),
\]
\[
F_h(x,y,\tau):=
\tau^{\frac12}
\Big[
\bigl(u_{\bar h}(x,\tau)+\nu\bigr)^{\frac{1-q}{2}}\xi^2(x)
-
\bigl(u_{\bar h}(y,\tau)+\nu\bigr)^{\frac{1-q}{2}}\xi^2(y)
\Big],
\]
and
\[
F(x,y,\tau):=
\tau^{\frac12}
\Big[
\bigl(u(x,\tau)+\nu\bigr)^{\frac{1-q}{2}}\xi^2(x)
-
\bigl(u(y,\tau)+\nu\bigr)^{\frac{1-q}{2}}\xi^2(y)
\Big].
\]
First note that
\begin{equation}\label{mol3}
\int_0^t\iint_{B\times B}
\frac{|G(x,y,\tau)|^2}{|x-y|^{N+2s}}
\,\dx\,\dy\,\d\tau
\leq
\|u\|_{L^2(0,t;W^{s,2}(B))}^2.
\end{equation}
By \cite[Lemma B.1]{MY}, we have
\begin{equation} \label{ae}
   \lim_{h\to 0}F_h(x,y,\tau) = F(x,y,\tau), 
\end{equation}
for almost every $(x,y,\tau)\in\R^N\times\R^N\times(0,T)$.
Letting $\eps\to0^+$ and using the dominated convergence theorem, we obtain
\begin{align} \label{h2h2}
\lim_{\eps\to0^+}H_2
={}&
\int_0^t
\iint_{\R^N\times\R^N}
G(x,y,\tau)F_h(x,y,\tau)K(x,y,\tau)
\,\dx\,\dy\,\d\tau.
\end{align}
To pass to the limit as $h\to0^+$, we consider the difference
\[
\int_0^t\iint_{\R^N\times\R^N}
G(x,y,\tau)\bigl[F_h(x,y,\tau)-F(x,y,\tau)\bigr]
K(x,y,\tau)\,\dx\,\dy\,\d\tau.
\]
Exploiting symmetry and recalling that $\xi=0$ in $\hat B^c$, we split this term as follows:
\begin{align} \label{h6h7}
&\int_0^t\iint_{\R^N\times\R^N}
G(x,y,\tau)\bigl[F_h(x,y,\tau)-F(x,y,\tau)\bigr]
K(x,y,\tau)\,\dx\,\dy\,\d\tau
\notag \\
={}&
\int_0^t\iint_{B\times B}
G(x,y,\tau)\bigl[F_h(x,y,\tau)-F(x,y,\tau)\bigr]
K(x,y,\tau)\,\dx\,\dy\,\d\tau
\notag \\
&+
2\int_0^t\tau^{\frac12}
\iint_{\hat B\times B^c}
G(x,y,\tau)
\Big[
\bigl(u_{\bar h}(x,\tau)+\nu\bigr)^{\frac{1-q}{2}}
-
\bigl(u(x,\tau)+\nu\bigr)^{\frac{1-q}{2}}
\Big]
\xi^2(x)K(x,y,\tau)\,\dx\,\dy\,\d\tau
\notag \\
=:{}&H_6+H_7.
\end{align}
We show separately that both $H_6$ and $H_7$ tend to $0$ as $h\to0^+$, up to a subsequence.

We first deal with $H_6$. Adding and subtracting the term
\[
\bigl(u_{\bar h}(y,\tau)+\nu\bigr)^{\frac{1-q}{2}}\xi^2(x),
\]
using the fact that $0\leq\xi\leq1$ in $\R^N$ and \eqref{k}, we obtain
\begin{align} \label{ig}
&\int_0^t\iint_{B\times B}|F_h(x,y,\tau)|^2
K(x,y,\tau)\,\dx\,\dy\,\d\tau
\leq
\gamma\int_0^t\tau\iint_{B\times B}
\frac{
\left|
\bigl(u_{\bar h}(x,\tau)+\nu\bigr)^{\frac{1-q}{2}}
-
\bigl(u_{\bar h}(y,\tau)+\nu\bigr)^{\frac{1-q}{2}}
\right|^2
}{
|x-y|^{N+2s}
}
\,\dx\,\dy\,\d\tau \notag
\\
&\quad+
\gamma\int_0^t\tau\iint_{B\times B}
\left(u_{\bar h}(y,\tau)+\nu\right)^{1-q}
\frac{
|\xi^2(x)-\xi^2(y)|^2
}{
|x-y|^{N+2s}
}
\,\dx\,\dy\,\d\tau \notag
\\
&=:H_8+H_9.
\end{align}
To estimate $H_8$, we observe that the map
\[
r\mapsto (r+\nu)^{\frac{1-q}{2}}
\]
is Lipschitz continuous in $[0,+\infty)$. Indeed, since $u_{\bar h}\geq0$, the mean value theorem yields
\[
\left|
\bigl(u_{\bar h}(x,\tau)+\nu\bigr)^{\frac{1-q}{2}}
-
\bigl(u_{\bar h}(y,\tau)+\nu\bigr)^{\frac{1-q}{2}}
\right|
\leq
\frac{q-1}{2}\nu^{-\frac{q+1}{2}}
\left|u_{\bar h}(x,\tau)-u_{\bar h}(y,\tau)\right|.
\]
Therefore, using $\tau\leq t$, the inequality above and \cite[Lemma B.1]{MY}, we obtain
\begin{align*}
H_8
\leq
\frac{\gamma t}{\nu^{q+1}}
\int_0^t\iint_{B\times B}
\frac{
\left|u_{\bar h}(x,\tau)-u_{\bar h}(y,\tau)\right|^2
}{
|x-y|^{N+2s}
}
\,\dx\,\dy\,\d\tau
\leq
\frac{\gamma t}{\nu^{q+1}}
\|u\|_{L^2(0,t;W^{s,2}(B))}^2.
\end{align*}
To estimate $H_9$, we use the inequalities
\[
|\xi^2(x)-\xi^2(y)|
\leq 2|\xi(x)-\xi(y)|
\leq
\frac{4}{(\sigma'-\sigma)\rho}|x-y|
\]
and
\[
\bigl(u_{\bar h}(y,\tau)+\nu\bigr)^{1-q}
\leq
\frac{\bigl(u_{\bar h}(y,\tau)+\nu\bigr)^2}{\nu^{q+1}}.
\]
Therefore, recalling that $\tau\leq t$ and using
\cite[Lemma B.1]{MY} with $r=2$, we obtain
\begin{align*}
H_9
&\leq
\frac{\gamma t}{(\sigma'-\sigma)^2\rho^2\nu^{q+1}}
\int_0^t\iint_{B\times B}
\bigl(u_{\bar h}(y,\tau)+\nu\bigr)^2
\frac{1}{|x-y|^{N+2s-2}}
\,\dx\,\dy\,\d\tau
\\
&\leq
\frac{\gamma t}{(\sigma'-\sigma)^2\rho^2\nu^{q+1}}
\left[
\int_0^t\int_B
\bigl(u_{\bar h}(y,\tau)+\nu\bigr)^2
\,\dy\,\d\tau
\right]
\left[
\sup_{y\in B}
\int_B
\frac{\dx}{|x-y|^{N+2s-2}}
\right]
\\
&\leq
\frac{\gamma t}{(\sigma'-\sigma)^2\rho^2\nu^{q+1}}
\left[
\int_0^t\int_B
u_{\bar h}^2(y,\tau)\,\dy\,\d\tau
+
\nu^2\rho^N t
\right]
\left[
\int_{B_{2\rho}(0)}
\frac{dz}{|z|^{N+2s-2}}
\right]
\\
&\leq
\frac{\gamma t}{(\sigma'-\sigma)^2\rho^{2s}\nu^{q+1}}
\left[
\|u\|_{L^2(B\times(0,t))}^2
+
\nu^2\rho^N t
\right].
\end{align*}
Combining the estimates for $H_8$ and $H_9$, we obtain from \eqref{ig} that $(F_h)$ is bounded in
\[
L^2\bigl(B\times B\times(0,t);K(x,y,\tau)\,\dx\,\dy\,\d\tau\bigr).
\]
By \eqref{ae}, up to a subsequence,
\[
F_h\rightharpoonup F
\quad\text{weakly in}\quad
L^2\bigl(B\times B\times(0,t);K(x,y,\tau)\,\dx\,\dy\,\d\tau\bigr).
\]
Moreover, by \eqref{mol3} and \eqref{k}, $G$ belongs to the same space. Therefore, up to a subsequence, the term $H_6$ in \eqref{h6h7} satisfies
\begin{equation}\label{mol5}
H_6\longrightarrow0
\qquad\text{as }h\to0^+.
\end{equation}

Next, we focus on $H_7$ in \eqref{h6h7}. First note that, for every
$x\in\hat B$ and $y\in B^c$, we have
\begin{equation}\label{ineq:distance_H7}
|x-y|
\geq |y|-|x|
\geq (1-\sigma')|y|.
\end{equation}
Recalling that $0\leq\xi\leq1$, using \eqref{ineq:distance_H7}, the upper bound in \eqref{k}, and the inequality
\[
|G(x,y,\tau)|
\leq u(x,\tau)+|u(y,\tau)|,
\]
we obtain
\begin{align}\label{ineq:estimate_H7}
|H_7|
\leq{}&
\frac{\gamma t^{\frac12}}{(1-\sigma')^{N+2s}}
\int_0^t\iint_{\hat B\times B^c}
\frac{u(x,\tau)}{|y|^{N+2s}}
\left|
\bigl(u_{\bar h}(x,\tau)+\nu\bigr)^{\frac{1-q}{2}}
-
\bigl(u(x,\tau)+\nu\bigr)^{\frac{1-q}{2}}
\right|
\,\dx\,\dy\,\d\tau
\nonumber\\
&+
\frac{\gamma t^{\frac12}}{(1-\sigma')^{N+2s}}
\int_0^t\iint_{\hat B\times B^c}
\frac{|u(y,\tau)|}{|y|^{N+2s}}
\left|
\bigl(u_{\bar h}(x,\tau)+\nu\bigr)^{\frac{1-q}{2}}
-
\bigl(u(x,\tau)+\nu\bigr)^{\frac{1-q}{2}}
\right|
\,\dx\,\dy\,\d\tau
\nonumber\\
=:{}&
\frac{\gamma t^{\frac12}}{(1-\sigma')^{N+2s}}
\bigl(H_{10}+H_{11}\bigr).
\end{align}
We now estimate $H_{10}$. By the mean value theorem, we have
\begin{equation}\label{ineq:lagrange_H10}
\left|
\bigl(u_{\bar h}(x,\tau)+\nu\bigr)^{\frac{1-q}{2}}
-
\bigl(u(x,\tau)+\nu\bigr)^{\frac{1-q}{2}}
\right|
\leq
\frac{q-1}{2\nu^{\frac{q+1}{2}}}
\left|u_{\bar h}(x,\tau)-u(x,\tau)\right|.
\end{equation}
Therefore, applying H\"older's inequality with exponents $(2,2)$ and using \eqref{ineq:lagrange_H10}, we obtain
\begin{align}\label{ineq:estimate_H10}
H_{10}
&\leq
\left[
\int_0^t\int_{\hat B}u^2(x,\tau)\,\dx\,\d\tau
\right]^{\frac12}
\left[
\int_0^t\int_{\hat B}
\left|
\bigl(u_{\bar h}(x,\tau)+\nu\bigr)^{\frac{1-q}{2}}
-
\bigl(u(x,\tau)+\nu\bigr)^{\frac{1-q}{2}}
\right|^2
\,\dx\,\d\tau
\right]^{\frac12}
\int_{B^c}\frac{\dy}{|y|^{N+2s}}
\nonumber\\
&\leq
\frac{\gamma}{\nu^{\frac{q+1}{2}}\rho^{2s}}
\|u\|_{L^2(\hat B\times(0,t))}
\|u_{\bar h}-u\|_{L^2(\hat B\times(0,t))}.
\end{align}
Consequently, by \cite[Lemma B.1]{MY} with $r=2$, we have
\[
H_{10}\longrightarrow0
\qquad\text{as }h\to0^+.
\]
As regards $H_{11}$, by \eqref{ineq:lagrange_H10}, we obtain
\begin{align}\label{ineq:estimate_H11}
H_{11}
&\leq
\frac{\gamma}{\nu^{\frac{q+1}{2}}}
\int_0^t
\left[
\int_{\hat B}
|u_{\bar h}(x,\tau)-u(x,\tau)|\,\dx 
\right]
\left[
\int_{B^c}
\frac{|u(y,\tau)|}{|y|^{N+2s}}\,\dy 
\right]
\,\d\tau
\nonumber\\
&\leq
\frac{\gamma}{\nu^{\frac{q+1}{2}}}
\left[
\sup_{\tau\in[0,t]}
\|u_{\bar h}(\cdot,\tau)-u(\cdot,\tau)\|_{L^1(\hat B)}
\right]
\textnormal{Tail}\bigl(|u|;B_\rho\times(0,t]\bigr).
\end{align}
By \cite[Lemma 2.6]{FDF} and \eqref{ineq:L_1_tail_condition}, it follows from
\eqref{ineq:estimate_H11} that
\[
H_{11}\longrightarrow0
\qquad\text{as }h\to0^+.
\]
Therefore, $H_7$ defined in \eqref{h6h7} satisfies
\begin{equation}\label{mol6}
H_7\longrightarrow0
\qquad\text{as }h\to0^+.
\end{equation}
Combining \eqref{mol5} and \eqref{mol6}, we obtain in \eqref{h6h7}, up to a subsequence,
\begin{equation} \label{h2h2h2}
    \int_0^t\iint_{\R^N\times\R^N}
G(x,y,\tau)\bigl[F_h(x,y,\tau)-F(x,y,\tau)\bigr]
K(x,y,\tau)\,\dx\,\dy\,\d\tau
\longrightarrow0,
\end{equation}
as $h\to0^+$. Hence, recalling $H_2$ in \eqref{mol1}, from \eqref{h2h2} and \eqref{h2h2h2}, we get
\begin{equation}\label{mol7}
\lim_{h\to0^+}\lim_{\eps\to0^+}H_2
=
\int_0^t\iint_{\R^N\times\R^N}
G(x,y,\tau)F(x,y,\tau)K(x,y,\tau)\,\dx\,\dy\,\d\tau,
\end{equation}
up to a subsequence. Finally, combining \eqref{mol2} and \eqref{mol7} and passing to the limit in \eqref{mol1}, we conclude the proof.
\end{proof}

\end{document}